\documentclass[11pt]{article}
\usepackage{amsmath}
\usepackage{amsthm}
\usepackage[xetex]{graphicx}
\usepackage{epstopdf}
\usepackage{amssymb,enumerate}
\usepackage{mathrsfs}
\usepackage{calligra}
 \usepackage{graphicx}
 \usepackage{latexsym}
 \usepackage{xcolor}
 \usepackage{geometry}
 \usepackage{booktabs}

\newtheorem{thm}{Theorem}[section]

\newtheorem{lem}[thm]{Lemma}
\newtheorem{prop}[thm]{Proposition}
\newtheorem{Corollary}{Corollary}
\newtheorem{defn}[thm]{Definition}

\makeatletter

\newcommand{\Rmnum}[1]
{expandafter\@slowromancap\romannumeral#1@}
\makeatother
\numberwithin{equation}{section}
\numberwithin{figure}{section}
\allowdisplaybreaks[4]
\begin{document}

\title{$\mathcal{D}$-Pullback Measure Attractors of McKean-Vlasov Stochastic Lattice Wave Equations}
\author{Huixia Wen$^{1,2}$,\,\,\,Ji Shu$^{1}$\thanks{Corresponding author:
		shuji@sicnu.edu.cn(J.Shu),\  wenhuixia@cafuc.edu.cn(H.Wen).},\,\,\,Gang He$^{1}$\small\\
	\textit{1.School of Mathematical Sciences, Sichuan Normal University},\\ \textit{Chengdu 610068, PR China}	\small\\
	\textit{2.Faculty of Science, Civil Aviation Flight University of China},\\ \textit{Chengdu 618307, PR China}}
\date{}
\maketitle
 {\begin{center}
\begin{minipage}{6.0in}
{\textbf{Abstract:} This paper deals with the pullback measure attractor for a non-autonomous distribution-dependent stochastic lattice wave equation with nonlinear noise on $\mathbb{Z}$. We establish well-posedness in the energy space $\ell^{2}\times\ell^{2}$ and show that the solution generates a continuous non-autonomous dynamical system on $\mathcal{P}_{4}(\ell^{2}\times\ell^{2})$, which possesses a unique pullback measure attractor 	$\mathcal{A}$. Furthermore, we prove the convergence of the attractor $\mathcal{A}^{\varepsilon}$ as $\varepsilon\to 0^{+}$ to the attractor of the corresponding distribution-independent limit system.}\\
{\textbf{Key words:} McKean--Vlasov stochastic lattice wave equation; nonlinear noise; pullback measure attractor; upper semi-continuity; tail-end estimate}\\
\end{minipage}
\end{center}}

 \section{Introduction}
 \indent
 \par
In this paper, we study the pullback measure attractors and their limiting behaviour as a perturbation parameter tends to zero, for the following non-autonomous, distribution-dependent stochastic lattice wave equation with nonlinear noise defined on the integer set $\mathbb{Z}$: for $\tau \in \mathbb{R}$ and $t>\tau$,

\begin{equation}\label{eq:1.1}
	\left\{
	\begin{aligned}
		&d\dot{u}_{i}(t)+\alpha \dot{u}_{i}(t)\,dt-\nu\big(u_{i-1}(t)-2u_{i}(t)+u_{i+1}(t)\big)\,dt+\lambda u_{i}(t)\,dt\\
		&\quad=\big(f_{i}(t,u_{i}(t),\mathcal{L}_{u_{i}(t)})+g_{i}(t)\big)\,dt+\sigma_{i}(t,u_{i}(t),\mathcal{L}_{u_{i}(t)})\,dW_{i}(t),\quad t>\tau,\\
		&u_{i}(\tau)=u_{i,\tau},\ \ \dot{u}_{i}(\tau)=\dot{u}_{i,\tau},
	\end{aligned}
	\right.
\end{equation}
where $\dot{u}_{i}$ denotes the first-order time-derivative of $u_{i}$, $u=(u_{i})_{i\in\mathbb{Z}}$ is the unknown sequence, the initial sequences $u_{\tau}=(u_{i,\tau})_{i\in\mathbb{Z}}$ and $\dot{u}_{\tau}=(\dot{u}_{i,\tau})_{i\in\mathbb{Z}}$ are given, and $\alpha,\nu,\lambda$ are positive constants. The sequence $g=(g_{i})_{i\in\mathbb{Z}}$ is a time-dependent random sequence; for every $i\in\mathbb{Z}$, $f_{i}$ and $\sigma_{i}$ are nonlinear mappings; $\mathcal{L}_{u_{i}}$ denotes the law (distribution) of the random variable $u_{i}$, and $(W_{i}(t))_{i\in\mathbb{Z}}$ is a sequence of independent standard two-sided real-valued Wiener processes defined on a complete filtered probability space $(\Omega,\mathcal{F},\{\mathcal{F}_{t}\}_{t\in\mathbb{R}},\mathbb{P})$ satisfying the usual conditions. The natural phase space of \eqref{eq:1.1} is the energy space $\ell^{2}\times\ell^{2}$, where the second component accounts for the velocity $\dot{u}$.\par 

Lattice dynamical systems have broad applications in physics, biology, and engineering \cite{1,2,3}; the long-term dynamics of deterministic and stochastic lattice systems have been extensively studied \cite{4,5,6,7}. 
Stochastic lattice equations with additive or linear multiplicative noise can be transformed into pathwise deterministic systems, thereby allowing the study of pathwise random attractors \cite{8,9,16}.  
For nonlinear noise such a transformation is unavailable, which led to the development of mean-square random dynamical systems, weak pullback mean random attractors(see \cite{53,10,11,12,13} for applications), and pullback measure attractors (see \cite{41,42,43,44,45,46} for applications).  
Recently, pullback measure attractors and evolution systems of measures have been established for a complex-valued $p$-Laplacian Ginzburg--Landau lattice system driven by superlinear L\'evy noise \cite{14}.  
The nonlinear noise case has been studied extensively for first-order SPDEs, but remains less developed for second-order equations; we refer to \cite{11,12} and references therein.\par

Beyond these contributions, the statistical dynamics of stochastic lattice systems, including the existence, ergodicity, and mixing of invariant measures, have become a focus of research in recent years; see, for example, \cite{15,18} for results on invariant measures in unbounded domains and on limiting behaviour of invariant measures, and \cite{19} for weak pullback mean random attractors and limiting dynamics of stochastic lattice systems. For related large deviation principles, we refer the reader to \cite{55,56} and the references therein.\par

Unlike previous studies, this work focuses on the non-autonomous distribution-dependent (also known as McKean--Vlasov) stochastic lattice wave equation with nonlinear noise. McKean--Vlasov stochastic differential equations (MVSDEs), first introduced in \cite{20,21}, often arise from interacting particle systems \cite{22}. A distinctive feature of these equations is their dependence not only on the states of the solutions but also on their distributions; consequently the Markov operators associated with MVSDEs do not form semigroups (see, e.g., \cite{31}), which makes the conventional methods for distribution-independent stochastic differential equations inapplicable.\par

The fundamental theory of MVSDEs which include existence, uniqueness, Bismut--Elworthy--Li formula, large deviations, and existence/ergodicity of invariant measures, has been extensively developed \cite{20,23,24,25,26,27,28,29,30}.  
For infinite-dimensional cases, pullback measure attractors were established in \cite{31,32,17}.  
Most recently, existence and upper semi-continuity of pullback measure attractors and invariant measures, Wasserstein convergence rates, and finite-dimensional approximation have been obtained for a superlinear McKean--Vlasov lattice system driven by L\'evy noise \cite{33}.\par

The concept of measure attractor was introduced by Schmalfuss \cite{34} for stochastic Navier--Stokes equations with additive noise and was further developed in various settings \cite{35,36,37,38,39}.  
It has been extended to non-autonomous frameworks and applied to a wide range of stochastic lattice and PDE systems \cite{41,42,43,44,45,46,47,48,49,50}.  
Recently, the construction of an enlarged measure attractor on the Banach space $\ell^{p}$ ($p>2$) for a superlinear stochastic fractional discrete $p$-Laplace equation \cite{40}.  
Meanwhile, a minimal pullback measure attractor (unique and convex) was obtained for fractional stochastic FitzHugh--Nagumo equations with superlinear space--time noise on $\mathbb{R}^{N}$ \cite{54}.  
The present work uses the $\mathcal{D}$-pullback measure attractor framework of \cite{31,32,17}.\par

In this article, we obtain the existence of pullback measure attractors for the non-autonomous McKean--Vlasov stochastic lattice wave equation \eqref{eq:1.1}. Compared with the related works \cite{14,31,32,33,40,41,42,43,44,45,46,47,48,49,50}, the present paper addresses a setting that has not been treated in the literature, namely the combination of (i) a second-order (wave) structure with a damping term $\alpha\dot{u}_{i}$ in the energy space $\ell^{2}\times\ell^{2}$, (ii) distribution dependence in both the drift $f_{i}$ and the diffusion $\sigma_{i}$, and (iii) infinite-dimensional nonlinear noise in the sense that $\sigma_{i}$ depends on the state and the spatial index $i$ ranges over the whole lattice $\mathbb{Z}$. The first goal of this paper is the existence and uniqueness of the pullback measure attractor $\mathcal{A}=\{\mathcal{A}(\tau)\}_{\tau\in\mathbb{R}}$ for \eqref{eq:1.1}.\par

To establish the existence of pullback measure attractors for \eqref{eq:1.1}, we first establish the well-posedness of \eqref{eq:1.1} in the energy space $\ell^{2}\times\ell^{2}$, namely the existence, uniqueness, and continuous dependence on initial data of solutions. The main technical challenge at this step is to handle simultaneously the second-order time-derivative, the distribution-dependent coefficients, and the infinite-dimensional nonlinear noise. We address the well-posedness by constructing a sequence of successive approximations in time, combined with the It\^o formula applied to the energy functional $E(u)=\frac{1}{2}\big(\|u\|^{2}+\|\dot{u}\|^{2}\big)$ (see, e.g., \cite{51}) , along with the BDG inequality and Gronwall's lemma. This yields the existence and uniqueness of strong solutions in $L^2(\Omega, C([\tau, T], \ell^2\times\ell^2))$. The weak uniqueness is then derived by the Yamada–Watanabe theorem.\par

Next, we introduce, on the space $\mathcal{P}_{4}(\ell^{2}\times\ell^{2})$ of probability measures with finite fourth moment, the family of mappings $\{P^{*}_{\tau,t}\}_{t\geq\tau}$ associated with the solutions of \eqref{eq:1.1}, and we show that it forms a continuous non-autonomous dynamical system on $\mathcal{P}_{4}(\ell^{2}\times\ell^{2})$ by means of the abstract criterion of \cite{52}. The continuity of $\{P^{*}_{\tau,t}\}_{t\geq\tau}$ is the second main technical challenge: in the distribution-dependent setting, $P^{*}_{\tau,t}$ is in general no longer the adjoint of the Markov operator $P_{\tau,t}$ in the sense of measures, that is, in general $\int\varphi\,\mathrm{d} P^{*}_{\tau,t}\mu \neq \int(\varphi\circ P_{\tau,t})\,\mathrm{d}\mu$, so the standard Feller-property argument for continuity on the whole measure space does not apply. Following the strategy of \cite{31}, we apply Vitali's theorem to establish the continuity of $\{P^{*}_{\tau,t}\}_{t\geq\tau}$ on a bounded closed subset $\mathcal{B}_{\mathcal{P}_{4}(\ell^{2}\times\ell^{2})}(r)$ of the measure space, rather than on the entire space, by exploiting the fourth-moment bound on the solutions. This requires an $\ell^{4}$ estimate on the solutions, which is also needed for the construction of a $\mathcal{D}$-pullback absorbing set in $\mathcal{P}_{4}(\ell^{2}\times\ell^{2})$.\par

A further difficulty is to establish the $\mathcal{D}$-pullback asymptotic compactness of $\{P^{*}_{\tau,t}\}_{t\geq\tau}$ on $\mathcal{P}_{4}(\ell^{2}\times\ell^{2})$. Since the equation is defined on the infinite-dimensional lattice $\ell^{2}$ and Sobolev-type compactness arguments do not apply, the pullback asymptotic compactness does not follow from the uniform $\ell^{2}\times\ell^{2}$ estimates alone. We overcome this by establishing uniform tail estimates on the solutions of \eqref{eq:1.1}, that is, we show that the tails of the solutions in $L^{2}(\Omega;\ell^{2}\times\ell^{2})$ are uniformly small with respect to the initial data and the time variable once the spatial index is sufficiently large, and we then apply the Arzel\`a--Ascoli theorem to prove compactness. The uniform tail estimates are the third main technical challenge: due to the simultaneous presence of distribution dependence in both the drift and the diffusion and of infinite-dimensional nonlinear noise, the derivation of these tail estimates is more delicate than in the cases treated in \cite{10,16,44}.\par

The second goal of this paper is to investigate the limiting behaviour of the pullback measure attractors constructed above. To this end, we introduce the following family of equations parametrized by $\varepsilon\in(0,1)$, obtained from \eqref{eq:1.1} by replacing the distribution-dependent coefficients $f_{i},\sigma_{i}$ with perturbations $f_{i}^{\varepsilon},\sigma_{i}^{\varepsilon}$ that still depend on the law $\mathcal{L}_{u_{i}^{\varepsilon}}$ but converge to $f_{i},\sigma_{i}$ in a suitable sense as $\varepsilon\to 0^{+}$: for $t>\tau$,
\begin{equation}\label{eq:1.2}
	\left\{
	\begin{aligned}
		&d\dot{u}_{i}^{\varepsilon}(t)+\alpha \dot{u}_{i}^{\varepsilon}(t)\,dt-\nu\big(u_{i-1}^{\varepsilon}(t)-2u_{i}^{\varepsilon}(t)+u_{i+1}^{\varepsilon}(t)\big)\,dt+\lambda u_{i}^{\varepsilon}(t)\,dt\\
		&\quad=\big(f_{i}^{\varepsilon}(t,u_{i}^{\varepsilon}(t),\mathcal{L}_{u_{i}^{\varepsilon}(t)})+g_{i}(t)\big)\,dt+\sigma_{i}^{\varepsilon}(t,u_{i}^{\varepsilon}(t),\mathcal{L}_{u_{i}^{\varepsilon}(t)})\,dW_{i}(t),\quad t>\tau,\\
		&u_{i}^{\varepsilon}(\tau)=u_{i,\tau},\ \ \dot{u}_{i}^{\varepsilon}(\tau)=\dot{u}_{i,\tau},
	\end{aligned}
	\right.
\end{equation}
and the corresponding distribution-independent limit equation, which is obtained from \eqref{eq:1.1} by dropping the dependence on $\mathcal{L}$:
\begin{equation}\label{eq:1.3}
	\left\{
	\begin{aligned}
		&d\dot{u}_{i}^{0}(t)+\alpha \dot{u}_{i}^{0}(t)\,dt-\nu\big(u_{i-1}^{0}(t)-2u_{i}^{0}(t)+u_{i+1}^{0}(t)\big)\,dt+\lambda u_{i}^{0}(t)\,dt\\
		&\quad=\big(f_{i}(t,u_{i}^{0}(t))+g_{i}(t)\big)\,dt+\sigma_{i}(t,u_{i}^{0}(t))\,dW_{i}(t),\quad t>\tau,\\
		&u_{i}^{0}(\tau)=u_{i,\tau},\ \ \dot{u}_{i}^{0}(\tau)=\dot{u}_{i,\tau},
	\end{aligned}
	\right.
\end{equation}
\par
We denote by $\mathcal{A}^{\varepsilon}=\{\mathcal{A}^{\varepsilon}(\tau)\}_{\tau\in\mathbb{R}}$ and $\mathcal{A}^{0}=\{\mathcal{A}^{0}(\tau)\}_{\tau\in\mathbb{R}}$ the pullback measure attractors of \eqref{eq:1.2} and \eqref{eq:1.3}, respectively, in $\mathcal{P}_{4}(\ell^{2}\times\ell^{2})$. Under the uniform assumptions on $(f_{i}^{\varepsilon},\sigma_{i}^{\varepsilon})$ and suitable convergence as $\varepsilon\to 0^{+}$, we prove that $\mathcal{A}^{\varepsilon}$ is upper semi-continuous with respect to $\mathcal{A}^{0}$, i.e., for every $\tau\in\mathbb{R}$,
\begin{equation}\label{eq:1.4}
	\lim_{\varepsilon\to 0^{+}} d_{\mathcal{P}_{4}(\ell^{2}\times\ell^{2})}\!\big(\mathcal{A}^{\varepsilon}(\tau),\mathcal{A}^{0}(\tau)\big)=0.
\end{equation}\par
This upper semi-continuity result is a natural robustness question for measure attractors and is closely related to the convergence of invariant measures between the perturbed and the limiting systems; we refer to \cite{7,14,32,33,40,41,44,48} for related conclusions in different settings.
\par
To make the contribution of the present paper relative to the most closely related works immediately clear, we summarize the comparison in the following table.
\begin{table}[h]
	\centering
	\small
	\caption{Comparison of the present paper with the most closely related recent works on pullback measure attractors. ``\checkmark'' indicates the feature is present in the cited work; ``--'' indicates it is not.}
	\label{tab:comparison}
	\begin{tabular}{lccccccc}
		\toprule
		& \textbf{This paper} & \cite{14} & \cite{31} & \cite{32} & \cite{33} & \cite{40} & \cite{54} \\
		\midrule
		McKean--Vlasov (distribution) dependence & \checkmark & -- & \checkmark & \checkmark & \checkmark & -- & -- \\
		Nonlinear noise & \checkmark & \checkmark & -- & -- & \checkmark & \checkmark & \checkmark \\
		Second-order (wave) structure & \checkmark & -- & -- & -- & -- & -- & -- \\
		Lattice setting (discrete) & \checkmark & \checkmark & -- & \checkmark & \checkmark & \checkmark & -- \\
		Banach phase space & -- & -- & -- & -- & -- & \checkmark & \checkmark \\
		Upper semi-continuity ($\varepsilon\to 0$) & \checkmark & \checkmark & -- & \checkmark & \checkmark & \checkmark & -- \\
		Two-stages method & -- & -- & -- & -- & -- & \checkmark & \checkmark \\
		Minimal \& convex attractor & -- & -- & -- & -- & -- & -- & \checkmark \\
		\bottomrule
	\end{tabular}
\end{table}
\par
The most distinctive features of the present paper are: (a) it is the first to establish pullback measure attractors for a McKean--Vlasov stochastic lattice wave equation, combining for the first time the three ingredients of M-V dependence, second-order wave structure, and infinite-dimensional nonlinear noise; (b) it establishes the upper semi-continuity of the attractor as the M-V perturbation parameter tends to zero, recovering the corresponding state-dependent limit attractor; and (c) it is set on the Hilbert space $\ell^{2}\times\ell^{2}$, leaving the Banach-space extension to \cite{40,54} and the minimal/convex refinement to \cite{54}.
\par
The outline of this paper is as follows. In Section 2 we collect some basic concepts and introduce the necessary assumptions. Section 3 is devoted to establishing the well-posedness of solutions for the distribution-dependent stochastic lattice wave equation \eqref{eq:1.1}--\eqref{eq:1.3}. In Section 4 we derive various uniform estimates of the solutions, including the uniform tail estimates. In Section 5, the existence and uniqueness of the pullback measure attractor $\mathcal{A}$ for equation \eqref{eq:1.1} is established. In Section 6, we prove the upper semi-continuity $\mathcal{A}^{\varepsilon}\to\mathcal{A}^{0}$ as $\varepsilon\to 0^{+}$.

Throughout this paper, $\|\cdot\|$ and $(\cdot,\cdot)$ denote the norm and the inner product of $\ell^{2}$, respectively, and $\|\cdot\|_{p}$ denotes the norm in $\ell^{p}$.

\section{Preliminaries}
\indent
\par
In this section, we recall some results regarding pullback measure attractors for non-autonomous dynamical systems on the space of probability measures$(see, e.g.,\cite{41},\cite{32})$.\par 
In what follows, we denote by $X$ a separable Banach space with norm $\|\cdot\|_{X}$. Let $C_{b}(X)$ be the space of bounded continuous function $\varphi: X\to \mathbb{R}$ endowed with the norm
\begin{equation*}
\|\varphi\|_{\infty}=\sup_{x\in X}\left|\varphi(x)\right|.
\end{equation*}
Donote by $L_{b}(X)$ the space of bounded Lipschitz functions on $X$ which consists of all functions $\varphi\in C_{b}(X)$ such that
\begin{equation*}
Lip(\varphi):=\sup_{x_{1},x_{2}\in X,x_{1}\neq x_{2}}\dfrac{\left|\varphi(x_{1})-\varphi(x_{2})\right|}{\|x_{1}-x_{2}\|_{X}}<\infty.
\end{equation*}
The space $L_{b}(X)$ is endowed with the norm
\begin{equation*}
\|\varphi\|_{L_b}=\|\varphi\|_{\infty}+Lip(\varphi).
\end{equation*}
Let $\mathcal{P}(X)$ be the set of probability measures on $(X,\mathcal{B}(X))$, where $\mathcal{B}(X)$ is the Borel $\sigma-$algebra of $X$. Given $\varphi\in C_{b}(X)$ and $\mu\in\mathcal{P}(X)$, we write
\begin{equation*}
	(\varphi,\mu)=\int_{X}\varphi(x)\mu(dx).
\end{equation*}\par
Recall that a sequence $\{\mu_{n}\}_{n=1}^{\infty}\subseteq\mathcal{P}(X)$ is weakly convergent to $\mu\in\mathcal{P}(X)$ if for every $\varphi\in C_{b}(X)$,
\begin{equation*}
	\lim_{n\rightarrow \infty}(\varphi,\mu_{n})=(\varphi,\mu).
\end{equation*}
Define a metric on $\mathcal{P}(X)$ by
\begin{equation*}
	d_{\mathcal{P}(X)}(\mu_{1},\mu_{2})=\sup_{\varphi\in L_{b}(X),\  \|\varphi\|_{L_{b}}\leq1}\left|(\varphi,\mu_{1})-(\varphi,\mu_{2})\right|,\ \ \forall\mu_{1},\mu_{2}\in\mathcal{P}(X).
\end{equation*}
Then $(\mathcal{P}(X),d_{\mathcal{P}(X)})$ is a polish space. Moreover, a sequence $\{\mu_n\}^{\infty}_{n=1}\subseteq\mathcal{P}(X)$ converges to $\mu$ in $(\mathcal{P}(X),d_{\mathcal{P}(X)})$ if and only if $\{\mu_n\}^{\infty}_{n=1}$ converges to $\mu$ weakly.
\par
For every$p\geq 1$, let $\mathcal{P}_{p}(X),\mathbb{W}_p^X$ denote a Polish space defined by
\begin{equation*}
	\mathcal{P}_{p}(X)=\left\{\mu\in\mathcal{P}(X): \mu(\|\cdot\|_X^p):=\int_{X}\|x\|^{p}_{X}\mu(dx)<\infty\right\}
\end{equation*}
and
\begin{equation*}
	\mathbb{W}_p^X(\mu,\nu)=\inf_{\pi\in\Pi(\mu,\nu)}\left(\int_{X\times X}\|x-y\|_X^p\pi(dx,dy)\right)^{\frac{1}{p}},\ \ \forall \mu,\nu\in\mathcal{P}_{p}(X),
\end{equation*}
where $\Pi(\mu,\nu)$ is the set of couplings of $\mu$ and $\nu$. The metric $\mathbb{W}_p^X$ is called the Wasserstein distance.
Given $r>0$, denote by
\begin{equation*}
	B_{\mathcal{P}_{p}(X)}(r)=\left\{\mu\in\mathcal{P}_{p}(X): \left(\int_{X}\|x\|^{p}_{X}\mu(dx)\right)^{\frac{1}{p}}<r\right\}.
\end{equation*}
A subset $V\subseteq\mathcal{P}_{p}(X)$ is bounded if there is $r>0$ such that $V\subseteq B_{\mathcal{P}_{p}(X)}(r)$. If $S$ is bounded in $\mathcal{P}_{p}(X)$, then we set
\begin{equation*}
	\|V\|_{\mathcal{P}_{p}(X)}(r)=\sup_{\mu\in V} \left(\int_{X}\|x\|^{p}_{X}\mu(dx)\right)^{\frac{1}{p}}.
\end{equation*}
\par
Note that $(\mathcal{P}_p(X), \mathbb{W}_p^X)$ is a Polish space, whereas $(\mathcal{P}_p(X), d_{\mathcal{P}(X)})$ is not complete. However, for every $r > 0$, the set $B_{\mathcal{P}_p(X)}(r)$ is a closed subset of $\mathcal{P}(X)$ with respect to the metric $d_{\mathcal{P}(X)}$. Consequently, the space $(B_{\mathcal{P}_p(X)}(r), d_{\mathcal{P}(X)})$ is complete for every $r > 0$.
\par 
Next, we recall the definition of non-autonomous dynamical systems in the metric space \((\mathcal{P}_p(X), d_{\mathcal{P}(X)})\) along with some related concepts.
\begin{defn}
\label{Def:2.1}
A family $S=\{S(t,\tau): t\in\mathbb{R}^{+}, \tau\in\mathbb{R}\}$ of mappings from $(\mathcal{P}_p(X), d_{\mathcal{P}(X)})$ to $(\mathcal{P}_p(X), d_{\mathcal{P}(X)})$ is called a continuous non-autonomous dynamical system on $(\mathcal{P}_p(X), d_{\mathcal{P}(X)})$, if for all $\tau\in\mathbb{R}$ and $t,s\in\mathbb{R}^{+}$, the following conditions are satisfied:\par
$(\mathbf{i})\ S(0,\tau)=I_{\mathcal{P}_{p}(X)}$, where $I_{\mathcal{P}_{p}(X)}$ is the identity operator on $(\mathcal{P}_p(X), d_{\mathcal{P}(X)})$;\par
$(\mathbf{ii})\ S(t+s,\tau)=S(t,s+\tau)\circ S(s,\tau)$.\par
\par
\end{defn}\par
If S is a non-autonomous dynamical system on $(\mathcal{P}_p(X), d_{\mathcal{P}(X)})$, and, for every bounded subset $S \subset \mathcal{P}_p(X)$, S is continuous from $(V, d_{\mathcal{P}(X)})$ to $(\mathcal{P}_p(X), d_{\mathcal{P}(X)})$, then we say that S is continuous on bounded subsets of $\mathcal{P}_p(X)$.\par 
Let $\mathcal{D}$ be a collection of families of nonempty bounded subsets $D(\tau) \subseteq \mathcal{P}_p(X) $, parametrized by  
$\tau \in \mathbb{R}$. Such a collection $\mathcal{D}$ is called inclusion-closed if $D = \{D(\tau) : \tau \in \mathbb{R}\} \in \mathcal{D} $ implies that every  
family $\tilde{D} = \{\tilde{D}(\tau) : \emptyset \neq \tilde{D}(\tau) \subseteq D(\tau), \, \forall \tau \in \mathbb{R}\}$ also belongs to $\mathcal{D}$.
\begin{defn}
	\label{Def:2.2}
	A family $K=\{K(\tau):\tau\in\mathbb{R}\}\in\mathcal{D}$ is called a $\mathcal{D}$-pullback absorbing set for $S$ if for each $\tau\in\mathbb{R}$ and every $D\in\mathcal{D}$, there exists $T=T(\tau,D)>0$ such that
		\begin{align*}
		S(t,\tau-t)D(\tau-t)\subseteq K(\tau),\ \forall\ t\geq T.
	\end{align*}
\end{defn}\par

\begin{defn}
	\label{Def:2.3}
The non-autonomous dynamical system $S$ is said to be $\mathcal{D}$-pullback asymptotically compact in $\mathcal{P}_{p}(X)$ if for each $\tau\in\mathbb{R},\{S(t_{n},\tau-t_{n})\mu_{n}\}^{\infty}_{n=1}$ has a convergent subsequence in $\mathcal{P}_{p}(X)$ wherever $t_{n}\rightarrow +\infty$ and $\mu_{n}\in D(\tau-t_{n})$ with $D\in\mathcal{D}$.
\end{defn}\par

\begin{defn}
	\label{Def:2.4}
	A family $\mathcal{A}=\{\mathcal{A}(\tau):\tau\in\mathbb{R}\}\in\mathcal{D}$ is called a $\mathcal{D}$-pullback measure attractor for $S$ if the following conditions are satisfied:\par
	$(\mathbf{i})\ \mathcal{A}(\tau)$ is compact in $\mathcal{P}_{p}(X)$ for each $\tau\in\mathbb{R}$;\par
	$(\mathbf{ii})\ \mathcal{A}$ is invariant, that is, $S(t,\tau)\mathcal{A}(\tau)=\mathcal{A}(\tau+t)$, for all $\tau\in\mathbb{R}$ and $t\in\mathbb{R}^+$;\par
	$(\mathbf{iii})\ \mathcal{A}$ attracts every set in $\mathcal{D}$, that is, for each $D=\{D(\tau):\tau\in\mathbb{R}\}\in\mathcal{D}$,
	\begin{align*}
		\lim_{t\to\infty}d_{\mathcal{P}_{p}(X)}(S(t,\tau-t)D(\tau-t),\mathcal{A}(\tau))=0,
	\end{align*}
	where $d_{\mathcal{P}_{p}(X)}(\cdot,\cdot)$ denotes the Hausdorff semi-metric between any two subsets of $\mathcal{P}_{p}(X)$.
	\end{defn}\par

Based on the above notation, from Theorem 2.25 and Proposition 3.6 in $\cite{52}$, we have the following criterion for the existence, uniqueness of $\mathcal{D}$-pullback measure attractors.

\begin{prop}
\label{pro:2.1}
Let $\mathcal{D}$ be an inclusion-closed collection of families of nonempty bounded subsets of $\mathcal{P}_{p}(X)$, and $S$ be a continuous non-autonomous dynamical system on $(\mathcal{P}_p(X), d_{\mathcal{P}(X)})$. Then $S$ has a unique $\mathcal{D}$-pullback measure attractor $\mathcal{A}\in\mathcal{D}$ in $(\mathcal{P}_p(X), d_{\mathcal{P}(X)})$ if and only if $S$ has a closed $\mathcal{D}$-pullback absorbing set $K\in\mathcal{D}$ and $S$ is $\mathcal{D}$-pullback asymptotically compact in $(\mathcal{P}_p(X), d_{\mathcal{P}(X)})$. 
\end{prop}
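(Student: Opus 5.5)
The statement is Proposition \ref{pro:2.1}, quoted from \cite{52}. It is an instance of the abstract theory of non-autonomous dynamical systems on metric spaces, so I would not reprove it from scratch. Instead I would check that the setting $(\mathcal{P}_p(X), d_{\mathcal{P}(X)})$ meets the hypotheses of Theorem 2.25 and Proposition 3.6 there. The one subtlety is that $(\mathcal{P}_p(X), d_{\mathcal{P}(X)})$ is not complete. The abstract theorem, however, only needs completeness along the $\omega$-limit construction. That is supplied by asymptotic compactness together with the fact that each bounded ball $B_{\mathcal{P}_p(X)}(r)$ is $d_{\mathcal{P}(X)}$-closed, hence complete.

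\textbf{Necessity.} If $\mathcal{A}\in\mathcal{D}$ is the attractor, take $K(\tau)$ to be the closed $\delta$-neighbourhood of $\mathcal{A}(\tau)$, intersected with a ball $B_{\mathcal{P}_p(X)}(r_\tau)$ that contains it. Attraction (iii) gives absorption. I would argue $K\in\mathcal{D}$ by the inclusion-closedness convention used in \cite{52}; alternatively one can take $K=\mathcal{A}$ in the weaker sense adopted there. For asymptotic compactness, take $t_n\to\infty$ and $\mu_n\in D(\tau-t_n)$. Attraction yields $\nu_n\in\mathcal{A}(\tau)$ with $d(S(t_n,\tau-t_n)\mu_n,\nu_n)\to0$. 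Compactness of $\mathcal{A}(\tau)$ gives a convergent subsequence of $\nu_n$, and hence of $S(t_n,\tau-t_n)\mu_n$.

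\textbf{Sufficiency.} Define
\[
\mathcal{A}(\tau)=\bigcap_{s\ge0}\overline{\bigcup_{t\ge s}S(t,\tau-t)K(\tau-t)}.
\]
The construction then runs in four steps.
\begin{enumerate}
\item Asymptotic compactness, with all points lying in the complete set $B_{\mathcal{P}_p(X)}(r)$, shows that $\mathcal{A}(\tau)$ is nonempty and compact. Every sequence in it is approximated by pullback images, and those have convergent subsequences.
\item A contradiction argument gives attraction. If $d(S(t_n,\tau-t_n)\mu_n,\mathcal{A}(\tau))\ge\varepsilon$, absorption together with the cocycle property (ii) writes these points as images of elements of $K$. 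By asymptotic compactness a limit exists, and that limit lies in $\mathcal{A}(\tau)$, which is a contradiction.
\item Invariance uses continuity of $S(t,\tau)$ on bounded sets, applied through the cocycle identity in both directions.
\item Uniqueness follows because two attractors in $\mathcal{D}$ attract each other and are invariant, so each is contained in the other.
\end{enumerate}

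\textbf{Main obstacle.} The main difficulty is invariance. Continuity is only assumed on bounded subsets, and the metric space is incomplete. So I would first check that all relevant sequences stay in a fixed ball, using $K\in\mathcal{D}$ and the closedness of balls, before passing limits through $S(t,\tau)$.
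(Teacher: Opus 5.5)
\textbf{Verdict: the ``if'' half is sound; the ``only if'' half has a gap, and it cannot be closed under the stated hypotheses.}

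Your ``if'' half is the standard $\omega$-limit argument behind the result the paper simply cites from \cite{52}. Add the one line where inclusion-closedness is actually used: $\mathcal{A}(\tau)\subseteq K(\tau)$, because $K(\tau)$ is closed and $K$ absorbs itself, hence $\mathcal{A}\in\mathcal{D}$. Completeness of balls is not really needed, since asymptotic compactness already gives limits in $\mathcal{P}_p(X)$.

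\textbf{The gap in the ``only if'' half.} Three steps do not go through.
\begin{enumerate}
\item Inclusion-closedness lets you pass to \emph{subfamilies} of members of $\mathcal{D}$, not to neighbourhoods of them. So nothing puts your closed $\delta$-neighbourhood of $\mathcal{A}(\tau)$, cut down to a ball, into $\mathcal{D}$.
\item Taking $K=\mathcal{A}$ is not available. Definition \ref{Def:2.2} requires $S(t,\tau-t)D(\tau-t)\subseteq K(\tau)$, whereas $\mathcal{A}$ only attracts.
\item Attraction in $d_{\mathcal{P}(X)}$ gives no control of $p$-th moments. It therefore does not place $S(t,\tau-t)D(\tau-t)$ eventually inside the ball $B_{\mathcal{P}_p(X)}(r_\tau)$ you intersect with. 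Without that ball, $K(\tau)$ is unbounded in $\mathcal{P}_p(X)$ and cannot belong to $\mathcal{D}$.
\end{enumerate}

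\textbf{Why this cannot be repaired.} Here is a counterexample to the necessity direction.
\begin{itemize}
\item Take $X=\mathbb{R}$. Let $S(t,\tau)\mu$ be the push-forward of $\mu$ under $x\mapsto e^{-t}x$. This satisfies (i)--(ii) and is non-expansive for $d_{\mathcal{P}(X)}$.
\item Let $\mathcal{D}$ consist of all families with $\emptyset\neq D(\tau)\subseteq Q:=\{\delta_0\}\cup\{\delta_{e^{-n}}:n=0,1,2,\dots\}$. Then $\mathcal{D}$ is inclusion-closed and made of bounded sets.
\item $S$ is $\mathcal{D}$-pullback asymptotically compact.
\item $\mathcal{A}(\tau)\equiv\{\delta_0\}$ is the unique $\mathcal{D}$-pullback measure attractor. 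An invariant member of $\mathcal{D}$ cannot contain $\delta_{e^{-n}}$, since $\delta_{e^{-n-1/2}}\notin Q$.
\item Yet no $K\in\mathcal{D}$ absorbs the constant family $\{\delta_1\}$. Indeed $S(t,\tau-t)\delta_1=\delta_{e^{-t}}\notin Q$ for every non-integer $t$.
\end{itemize}

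So the necessity direction needs an extra assumption on $\mathcal{D}$. Some form of neighbourhood-closedness compatible with $\mathcal{P}_p$-boundedness is how such converses are usually formulated. Only the sufficiency direction holds as stated, and that is all Theorem \ref{th5.4} uses.
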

\par 
We now present some assumptions that will be used in subsequent estimates. Throughout this paper, we set $\delta_0$ for the Dirac probability measure at 0.\par  
$\textbf{(A1).\ Assumption on nonlinear term}\ f_i$. Suppose $f_i: \mathbb{R} \times \mathbb{R} \times \mathbb{R} \times \mathcal{P}_2(\mathbb{R}) \to \mathbb{R}$ is continuous, there exists a $\psi(\cdot) = (\psi_i(\cdot))_{i \in \mathbb{Z}} \in L^\infty_{loc}(\mathbb{R}, \ell^2)$ such that for any $t, u, u_1, u_2 \in \mathbb{R}$ and $\mu, \mu_1, \mu_2 \in \mathcal{P}_2(\mathbb{R})$,  
\begin{align}
	\label{eq:2.1}
	\theta(\cdot)=(\theta_i(\cdot))_{i \in \mathbb{Z}}:=(f_i(t,0,\delta_0))_{i \in \mathbb{Z}}\in L^\infty_{loc}(\mathbb{R}, \ell^2),
\end{align}
\begin{align}
\label{eq:2.2}
|f_i(t, u_1, \mu_1) - f_i(t, u_2, \mu_2)| \leq \psi_i(t)(|u_1 - u_2| + \mathbb{W}_2^{\mathbb{R}}(\mu_1, \mu_2)),
\end{align} 
\begin{align}
	\label{eq:2.3}
	\frac{\partial}{\partial u}f_i(t, u, \mu) \leq \eta_i(t),
\end{align} 
where $p\geq2,\ \eta(\cdot)=(\eta(\cdot))_{i \in \mathbb{Z}}\in L^\infty_{loc}(\mathbb{R}, \ell^2)$.\par 
By \eqref{eq:2.1}-\eqref{eq:2.3} we can get for all $t,u\in\mathbb{R},\ \mu\in\mathcal{P}_2(\mathbb{R})$, there exists a $\gamma(\cdot)=(\gamma_i(\cdot))_{i \in \mathbb{Z}}\in L^\infty_{loc}(\mathbb{R}, \ell^2)$,
\begin{align}
	\label{eq:2.4}
	\left|f_i(t, u, \mu)\right| \leq \gamma_i(t)(1+\left|u\right|+\mathbb{W}_2^{\mathbb{R}}(\mu, \delta_0)).
\end{align}\par 
$\textbf{(A2).\ Assumption on nonlinear term}\ g_i$. $g(\cdot)=(g_i(\cdot))_{i \in \mathbb{Z}}\in L^\infty_{loc}(\mathbb{R}, \ell^2)$.\par 
$\textbf{(A3).\ Assumption on diffusion term}\ \sigma_i$. Suppose $\sigma_i: \mathbb{R} \times \mathbb{R} \times \mathbb{R} \times \mathcal{P}_2(\mathbb{R}) \to \mathbb{R}$ is continuous, there exists a $\kappa(\cdot) = (\kappa_i(\cdot))_{i \in \mathbb{Z}} \in L^\infty(\mathbb{R}, \ell^2)$ such that for any $t, u, u_1, u_2 \in \mathbb{R}$ and $\mu, \mu_1, \mu_2 \in \mathcal{P}_2(\mathbb{R})$,  
\begin{align}
	\label{eq:2.5}
	\chi(\cdot)=(\chi_i(\cdot))_{i \in \mathbb{Z}}:=(\sigma_i(t,0,\delta_0))_{i \in \mathbb{Z}}\in L^\infty(\mathbb{R}, \ell^2),
\end{align}
\begin{align}
	\label{eq:2.6}
	|\sigma_i(t, u_1, \mu_1) - \sigma_i(t, u_2, \mu_2)| \leq \kappa_i(t)(|u_1 - u_2| + \mathbb{W}_2^{\mathbb{R}}(\mu_1, \mu_2)),
\end{align}
then from \eqref{eq:2.5} and \eqref{eq:2.6} we have,
\begin{align}
	\label{eq:2.7}
	\left|\sigma_i(t, u, \mu)\right| \leq \kappa_i(t)(\left|u\right|+\mathbb{W}_2^{\mathbb{R}}(\mu, \delta_0))+\chi_i(\cdot).
\end{align}\par 
Next, we introduce the space
\begin{align*}
\mathcal{L}^2 := \left\{ (\mu_i)_{i \in \mathbb{Z}} : \mu_i \in \mathcal{P}_2(\mathbb{R}) \text{ for each } i \in \mathbb{Z} \text{ and } \sum_{i \in \mathbb{Z}} W_2^{\mathbb{R}}(\mu_i, \delta_0)^2 < \infty \right\},
\end{align*}
which is equipped with the metric
\begin{align*}
	\rho(\mu, \omega) := \left\{ \sum_{i \in \mathbb{Z}} \mathbb{W}_2^{\mathbb{R}}(\mu_i, \omega_i)^2 \right\}^{\frac{1}{2}},
\end{align*}
for $\mu = (\mu_i)_{i \in \mathbb{Z}}, \omega = (\omega_i)_{i \in \mathbb{Z}} \in \mathcal{L}^2$. From Lemma 2.1 in \cite{32}, we can get that $(\mathcal{L}^2, \rho)$ is a complete metric space. Then for $u = (u_i)_{i \in \mathbb{Z}} \in L^2(\Omega, \ell^2)$, we denote $\mathcal{L}_u := (\mathcal{L}_{u_i})_{i \in \mathbb{Z}}$. Clearly, $\mathcal{L}_u \in \mathcal{L}^2$. Moreover, for any  
$u = (u_i)_{i \in \mathbb{Z}}, v = (v_i)_{i \in \mathbb{Z}} \in L^2(\Omega, \ell^2)$, the following property holds  
	\begin{align}
	\label{eq:2.8}
	\rho(\mathcal{L}_u, \mathcal{L}_v)^2 = \sum_{i \in \mathbb{Z}} \mathbb{W}_2^{\mathbb{R}}(\mathcal{L}_{u_i}, \mathcal{L}_{v_i})^2 \leq \sum_{i \in \mathbb{Z}} \mathbb{E} \|u_i - v_i\|^2 = \mathbb{E} \|u - v\|^2.
\end{align} \par
To express the lattice system \eqref{eq:1.1} as a differential equation in $\ell^2$, we define the following linear operators for every $u = (u_i)_{i \in \mathbb{Z}} \in \ell^2$:
\begin{align*}
(Bu)_i := u_{i+1} - u_i, \quad (B^*u)_i := u_{i-1} - u_i
\end{align*}
and
\begin{align*}
(Au)_i := -u_{i-1} + 2u_i - u_{i+1}, \quad \forall i \in \mathbb{Z}.
\end{align*}
Note that 
\begin{align}\label{eq:2.9}
	A = BB^* = B^*B\ \ \text{and}\ \ 
(B^*u, v) = (u, Bv),
\end{align}
for all $u, v \in \ell^2$. Moreover, it can be seen that $A$ is a bounded linear operator from $\ell^2$ to $\ell^2$ with
\begin{align}
		\label{eq:2.10}
\|Bu\|\leq 2 \|u\|,\ \ \text{and}\ \ \|Au\|\leq 4 \|u\|, \quad \forall u \in \ell^2.
\end{align}\par
For every $t \in \mathbb{R}, u = (u_i)_{i \in \mathbb{Z}}\in \ell^2$ and $\mu = (\mu_i)_{i \in \mathbb{Z}} \in \mathcal{L}^2$, we denote
\begin{align*}
   f(t, u, \mu) = (f_i(t, u_i, \mu_i))_{i \in \mathbb{Z}},
\end{align*}
and define the operator $\tilde{\sigma} : \mathbb{R} \times \ell^2 \times \ell^2 \times \mathcal{L}^2 \to L(\ell^2, \ell^2)$ by
\begin{align*}
\tilde{\sigma}(t, u, \mu)w = (\sigma_i(t, u_i, \mu_i)w_i)_{i \in \mathbb{Z}},
\end{align*}
for every $u = (u_i)_{i \in \mathbb{Z}}, w = (w_i)_{i \in \mathbb{Z}} \in \ell^2$ and $\mu = (\mu_i)_{i \in \mathbb{Z}} \in \mathcal{L}^2$. Let $e_k \in \ell^2$ represent the element having $1$ at position $k$ and $0$ in all the other components. Then, the set $\{e_k\}_{k \in \mathbb{Z}}$ forms a complete orthogonal basis of $\ell^2$.\par
We denote by $L_2(\ell^2, \ell^2)$ the space of Hilbert Schmidt operators from $\ell^2$ to $\ell^2$ which is equipped with norm
\begin{align*}
		\|C\|_{L_2(\ell^2, \ell^2)} := \left( \sum_{k \in \mathbb{Z}} \|Ce_k\|^2 \right)^{\frac{1}{2}}, \quad \forall C \in L_2(\ell^2, \ell^2).
\end{align*}
For any $t \in \mathbb{R}, u_1, u_2\in \ell^2$ and $\mu_1, \mu_2 \in \mathcal{L}^2$, by \eqref{eq:2.2} and \eqref{eq:2.6}, we have that
\begin{align}
\label{eq:2.11}
\|f(t, u_1, \mu_1) - f(t, u_2, \mu_2)\|^2 \leq 2\|\psi(t)\|^2 (\|u_1 - u_2\|^2 +\rho(\mu_1, \mu_2)^2),
\end{align}
and
\begin{align}
	\begin{aligned}
	\label{eq:2.12}
			\|\tilde{\sigma}(t, u_1,\mu_1) - \tilde{\sigma}(t, u_2, \mu_2)\|_{L_2(\ell^2, \ell^2)}^2 
			=& \sum_{k \in \mathbb{Z}} \|[\tilde{\sigma}(t, u_1,\mu_1) - \tilde{\sigma}(t, u_2, \mu_2)]e_k\|^2 \\
			\leq &2\|\kappa(t)\|^2 (\|u_1 - u_2\|^2 + \rho(\mu_1, \mu_2)^2).
	\end{aligned}
\end{align}
For all $t \in \mathbb{R}, u, v \in \ell^2$ and $\mu \in \mathcal{L}^2$, using \eqref{eq:2.4} and \eqref{eq:2.7}, we have that
\begin{align}
	\label{eq:2.13}
		\|f(t, u,\mu)\|^2 \leq 3\|\gamma(t)\|^2 (1 + \|u\|^2 + \rho(\mu, \hat{\delta}_0)^2),
\end{align}
and
\begin{align}
		\begin{aligned}
				\label{eq:2.14}
			\|\tilde{\sigma}(t, u, \mu)\|_{L_2(\ell^2, \ell^2)}^2 &= \sum_{k \in \mathbb{Z}} \|\sigma_k(t, u_k,\mu_k)\|^2 \\
			&\leq 4\|\kappa(t)\|^2 (\|u\|^2 +\rho(\mu, \hat{\delta}_0)^2) + 2\|\chi(t)\|^2,
		\end{aligned}
	\end{align}
where $\hat{\delta}_0=(\delta_0)_{i\in\mathbb{Z}}$.\par

Let $\delta>0$ be a constant, and we denote
\begin{align}
	\label{eq:2.15}
	\Phi(t):=\binom{u(t)}{v(t)}\ \ \ \text{with}\ \ \ v(t):=\dot{u}(t)+\delta u(t),
\end{align}
where $u(t)=(u_i(t))_{i\in\mathbb{Z}}$. Then, using the above notation, we rewrite equation \eqref{eq:1.1} as the following first-order system in $\ell^2\times\ell^2$ for $t>\tau$ with $\tau\in\mathbb{R}$:
\begin{equation}\label{eq:2.16}
	\left\{
	\begin{aligned}
		&d\Phi(t)=F(\Phi(t))dt+G(\Phi(t))dW(t)\\		
		&\Phi(\tau ) = \Phi_{\tau}=\binom{u_\tau}{v_\tau},
	\end{aligned}
	\right.
\end{equation}
where $u_\tau=(u_{i,\tau})_{i\in\mathbb{Z}},\ v_\tau=(\dot{u}_{i,\tau}+\delta u_{i,\tau})_{i\in\mathbb{Z}}$,
\begin{align*}
	F(\Phi(t))=\binom{v(t)-\delta u(t)}{-(\lambda+\delta^2-\alpha \delta)u(t)-(\alpha-\delta)v(t)-\nu Au(t)+f(t,u(t),\mathcal{L}_{u(t)})+g(t)},
\end{align*} 
and
\begin{align*}
	G(\Phi(t))=\binom{0}{\tilde{\sigma}(t, u(t),\mathcal{L}_{u(t)})dW(t)}.
\end{align*} 
Besides, $W$ is a cylindrical $Q$-Wiener process with $Q = I$ in $\ell^2$ on a probability space $(\Omega, \mathcal{F}, \mathbb{P})$. Let $\mathcal{N}$ denote the collection of null sets in the completion of $\mathcal{F}$. We choose the normal filtration
\begin{align*}
	\mathcal{F}_t := \sigma\{W(u) : \tau \leq u \leq t\} \lor \mathcal{N}, \quad \forall t \geq \tau.
\end{align*}
\par

For a Banach space $X$ and $\tau\in \mathbb R$, we use $L^2_{\mathcal{F}_{\tau}}(X)$ to denote the space of all $\mathcal{F}_{\tau}$-measurable, $X$-valued random variables
$\phi$ with $\mathbb{E}\|\phi\|^2_{X}<\infty$, where $\mathbb{E}$ means the mathematical expectation.\par
The strong solution of the system \eqref{eq:2.16} is understood in the following sense.
\begin{defn}
	\label{Def:2.6}
	Suppose $\tau\in\mathbb{R}$, and $\Phi_{\tau}=\binom{u_{\tau}}{v_{\tau}}\in L^2_{\mathcal{F}_{\tau}}(\Omega,\ell^2\times\ell^2)$ is $\mathcal{F}_{\tau}$-measurable. Then, a continuous $\ell^2\times\ell^2$-valued stochastic process $\Phi(t)=\binom{u(t)}{v(t)}$ with $t\in [\tau, \infty)$ is called a strong solution of \eqref{eq:2.16} with initial data $\Phi_{\tau}$ if $(\Phi_t)_{t\geq \tau}$ is $\mathcal{F}_{t}$-adapted, $\Phi(t)\in L^2(\Omega,C([\tau,T],\ell^2\times\ell^2)$ for all $T>\tau$ and for each $\tau\leq t\leq T$, $\Phi(t)=\binom{u(t)}{v(t)}$ satisfies
\begin{equation}\label{eq:2.17}
	u(t)=u_{\tau}+\int_{\tau}^{t}(v(r)-\delta u(r))dr,
\end{equation}	
\begin{equation}\label{eq:2.18}
	\begin{aligned}
			v(t)=&v_{\tau}+\int_{\tau}^{t}\left(-(\lambda+\delta^2-\alpha \delta)u(r)-(\alpha-\delta)v(r)-\nu Au(r)+f(r,u(r),\mathcal{L}_{u(r)})+g(r)\right)dr\\&+\int_{\tau}^{t}(\tilde{\sigma}(r, u(r),\mathcal{L}_{u(r)}))dW(r).
	\end{aligned}
\end{equation}		
\end{defn}\par
The weak solution of the system \eqref{eq:2.16} is understood in the following sense.
\begin{defn}
	\label{Def:2.7}
	For any $\mu_\tau \in \mathcal{P}_2(\mathbb{R})$, $(\tilde{\Phi}(t))_{t \ge \tau}, (\tilde{W}(t))_{t \ge \tau}$ is called a weak solution to \eqref{eq:2.16} starting at $\mu_\tau$, if $(\tilde{W}(t))_{t \ge \tau}$ is a cylindrical $Q$-Wiener process with $Q = I$ under a stochastic basis $(\tilde{\Omega}, \tilde{\mathcal{F}}, \{\tilde{\mathcal{F}}_t\}_{t \ge \tau}, \tilde{\mathbb{P}})$ (i.e. a complete probability space with normal filtration), and $(\tilde{\Phi}(t))_{t \ge \tau}$ is a $\mathcal{F}_t$-adapted $\ell^2\times\ell^2$-valued stochastic process with $\mathcal{L}_{\tilde{u}_\tau}|_{\tilde{\mathbb{P}}} = \mu_\tau$, $\tilde{\Phi}=\binom{\tilde{u}}{\tilde{v}} \in L^2(\Omega, C([\tau, T], \ell^2\times\ell^2))$ for any $T > \tau$ and for each $\tau\leq t\leq T$, $\tilde{\Phi}(t)=\binom{\tilde{u}(t)}{\tilde{v}(t)}$ satisfies
	\begin{equation}\label{eq:2.19}
		\tilde{u}(t)=\tilde{u}_{\tau}+\int_{\tau}^{t}(\tilde{v}(r)-\delta \tilde{u}(r))dr,
	\end{equation}	
	\begin{equation}\label{eq:2.20}
		\begin{aligned}
			\tilde{v}(t)=&\tilde{v}_{\tau}+\int_{\tau}^{t}\left(-(\lambda+\delta^2-\alpha \delta)\tilde{u}(r)-(\alpha-\delta)\tilde{v}(r)-\nu A\tilde{u}(r)+f(r,\tilde{u}(r),\mathcal{L}_{\tilde{u}(r)})+g(r)\right)dr\\&+\int_{\tau}^{t}(\tilde{\sigma}(r, \tilde{u}(r),\mathcal{L}_{\tilde{u}(r)}))d\tilde{W}(r).
		\end{aligned}
	\end{equation}		
\end{defn}\par

From now on, we assume that the coefficient $\delta$ in \eqref{eq:2.15} is a small enough positive constant such that
\begin{align}
	\label{eq:2.21}
	\alpha-\delta>0\ \ and\ \ 	\lambda+\delta^2-\alpha \delta>0,
\end{align}
and let
\begin{align}
	\label{eq:2.22}
	\beta=\min\{\delta,\alpha-\delta\},
\end{align}
\begin{align}
	\label{eq:2.23}
	\delta(\lambda+\delta^2-\alpha \delta)-(\dfrac{12}{\alpha-\delta}\|\gamma\|^2_{L^\infty([\tau,T],\ell^2)}+8\|\kappa\|^2_{L^\infty})>0,
\end{align}
and
\begin{align}
	\label{eq:2.24}
	2\delta>3\varepsilon_1+\varepsilon_2[(\lambda+\delta^2-\alpha \delta)^4+256\nu^4+54\|\gamma\|_{L^\infty([\tau,T],\ell^2)}^4]+576\varepsilon_3^{-1}\|\kappa\|^4_{L^\infty}.
\end{align}
\par 
Denote $D_1 = \{D(\tau) : \tau \in \mathbb{R}, D(\tau) \text{ is a bounded nonempty subset of } \mathcal{P}_2(\ell^2\times\ell^2)\}$, and $D_2 = \{D(\tau) : \tau \in \mathbb{R}, D(\tau) \text{ is a bounded nonempty subset of } \mathcal{P}_4(\ell^2\times\ell^2)\}$. Denote by $\mathcal{D}_0$ the collection of all such families $D_1$ which further satisfy:
\begin{align*}
	\lim_{\tau \to -\infty} e^{\beta \tau} \|D_1(\tau)\|_{\mathcal{P}_2(\ell^2\times\ell^2)}^2 = 0,
\end{align*}
where $\beta$ is the same number as in \eqref{eq:2.22}. Similarly, denote by $\mathcal{D}$ the collection of all such families $D_2$ which further satisfy:
\begin{align*}
	\lim_{\tau \to -\infty} e^{2\beta \tau} \|D_2(\tau)\|_{\mathcal{P}_4(\ell^2\times\ell^2)}^4 = 0.
\end{align*}
It is evident $\mathcal{D} \subseteq \mathcal{D}_0$.\par 
Throughout this article, we assume
\begin{align}
	\label{eq:2.25}
	\int_{-\infty}^{\tau}e^{\beta(t-\tau)}\|g(t)\|^2dt<\infty
\end{align}
and
\begin{align}
	\label{eq:2.26}
\int_{-\infty}^{\tau}e^{2\beta(t-\tau)}\|g(t)\|^4dr<\infty,\ \ \forall \tau\in\mathbb{R}.
\end{align}

\section{Existence and uniqueness of solutions}
\indent
\par
In this section, we will establish the well-posedness of solutions (including both strong and weak solutions) to the  distribution-dependent second-order stochastic lattice system \eqref{eq:2.16}.\par
The following proposition give the existence and uniqueness of strong solutions for the second-order stochastic
lattice system \eqref{eq:2.16}.

\begin{prop}\label{pro:3.1}
Suppose $\tau\in\mathbb{R}$, and $\Phi_{\tau}=\binom{u_{\tau}}{v_{\tau}}\in L^2_{\mathcal{F}_{\tau}}(\Omega,\ell^2\times\ell^2)$ is $\mathcal{F}_{\tau}$-measurable. If $(A1)-(A3)$ hold, then
system \eqref{eq:2.16} has a unique stong solution  $\Phi=\binom{u}{v}\in L^2(\Omega,C([\tau,T]),\ell^2\times\ell^2)$ for every $T>\tau$ with the property $\mathcal{L}_{\Phi_{(t)}}\in\mathcal{P}^2(\ell^2\times\ell^2)$ for all $\tau\leq t\leq T$. In addition, in this context,uniqueness means that any other solution $\Psi$ of system \eqref{eq:2.16} is indistinguishable from $\Phi$, i.e.,:
\begin{align*}
\mathbb{P}\left(\left\{\Phi(t)=\Psi(t),\forall t\in [\tau,T]\right\}\right)=1. 
\end{align*}

\end{prop}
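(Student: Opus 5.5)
The proof is a Picard iteration in time for the McKean--Vlasov system \eqref{eq:2.16}, with the law of the previous iterate frozen in the coefficients. All drift terms are globally Lipschitz on $\ell^2\times\ell^2$: the linear part is bounded by \eqref{eq:2.10}, and $f,\tilde\sigma$ are Lipschitz by \eqref{eq:2.11}--\eqref{eq:2.12}. So a contraction argument on $[\tau,T]$ is feasible.

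\textbf{Setting up the iteration.} Fix $T>\tau$ and let $\mathcal{S}_T$ be the Banach space of continuous, $\mathcal{F}_t$-adapted $\ell^2\times\ell^2$-valued processes with norm $\big(\mathbb{E}\sup_{\tau\le t\le T}\|\Phi(t)\|^2_{\ell^2\times\ell^2}\big)^{1/2}$. Set $\Phi^{(0)}(t)\equiv\Phi_\tau$. Given $\Phi^{(n)}=\binom{u^{(n)}}{v^{(n)}}$, define $\Phi^{(n+1)}$ by the right-hand sides of \eqref{eq:2.17}--\eqref{eq:2.18}, using $u^{(n)}$ and $\mathcal{L}_{u^{(n)}(r)}$ inside $f$ and $\tilde\sigma$. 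Well-definedness in $\mathcal{S}_T$ follows from three facts:
\begin{itemize}
\item the growth bounds \eqref{eq:2.13}--\eqref{eq:2.14} hold;
\item $\rho(\mathcal{L}_{u},\hat\delta_0)^2\le\mathbb{E}\|u\|^2$ by \eqref{eq:2.8};
\item $\|\gamma\|,\|\kappa\|,\|\chi\|,\|g\|$ are bounded on $[\tau,T]$ by (A1)--(A3).
\end{itemize}
Applying the BDG inequality to the stochastic integral then gives $\mathbb{E}\sup_{[\tau,T]}\|\Phi^{(n+1)}\|^2\le C_T(1+\mathbb{E}\|\Phi_\tau\|^2)$, uniformly in $n$ by induction, with a Gronwall step. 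Continuity holds because the Lebesgue integral is continuous and the Hilbert-space stochastic integral has a continuous version.

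\textbf{Contraction estimate.} Set $\Delta^{(n)}=\Phi^{(n+1)}-\Phi^{(n)}$. By Cauchy--Schwarz on the $dr$ integrals and BDG (Doob would also suffice) on the $dW$ integral, with \eqref{eq:2.11}, \eqref{eq:2.12} and \eqref{eq:2.8}, the law terms satisfy $\rho(\mathcal{L}_{u^{(n)}(r)},\mathcal{L}_{u^{(n-1)}(r)})^2\le\mathbb{E}\|u^{(n)}(r)-u^{(n-1)}(r)\|^2$. This gives
\begin{align*}
\mathbb{E}\sup_{\tau\le s\le t}\|\Delta^{(n)}(s)\|^2\le M\int_\tau^t\mathbb{E}\sup_{\tau\le s\le r}\|\Delta^{(n-1)}(s)\|^2dr,
\end{align*}
where $M$ depends on $T,\nu,\alpha,\lambda,\delta$ and on $\|\psi\|_{L^\infty(\tau,T)}$ and $\|\kappa\|_{L^\infty}$. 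Iterating yields the bound $C(M(T-\tau))^n/n!$. Hence $\Phi^{(n)}$ is Cauchy in $\mathcal{S}_T$, and by Borel--Cantelli it converges a.s.\ uniformly on $[\tau,T]$ to a continuous adapted $\Phi$.

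\textbf{Passing to the limit.} The laws converge: $\rho(\mathcal{L}_{u^{(n)}(r)},\mathcal{L}_{u(r)})^2\le\mathbb{E}\|u^{(n)}(r)-u(r)\|^2\to0$ uniformly in $r$. Together with the Lipschitz bounds and the Itô isometry, this lets us pass to the limit in \eqref{eq:2.17}--\eqref{eq:2.18}, so $\Phi$ is a strong solution. The moment bound carries over, so $\mathcal{L}_{\Phi(t)}\in\mathcal{P}_2(\ell^2\times\ell^2)$. The solutions for different $T$ are consistent by uniqueness, so they glue into a solution on $[\tau,\infty)$.

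\textbf{Uniqueness.} Let $\Psi$ be another solution and write $w=\Phi-\Psi$. The same estimate, with the law term again controlled by \eqref{eq:2.8}, gives $h(t):=\mathbb{E}\sup_{[\tau,t]}\|w\|^2\le M\int_\tau^t h(r)dr$. Gronwall gives $h\equiv0$, and continuity of paths upgrades this to indistinguishability.

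The only genuine subtlety is the distribution dependence, since the frozen law enters nonlocally. It is handled entirely by \eqref{eq:2.8}, which bounds the Wasserstein term by the $L^2(\Omega)$ distance and so reduces everything to the standard Lipschitz SDE argument. I expect no real obstacle beyond bookkeeping the constants. The one-sided condition \eqref{eq:2.3} is not needed at this stage.
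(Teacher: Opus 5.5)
Your proposal is correct and follows essentially the same route as the paper: Picard iteration with the previous iterate's law frozen in $f$ and $\tilde\sigma$, uniform second-moment bounds via BDG and Gronwall, the factorial bound $M[N(t-\tau)]^n/n!$ on successive differences, Borel--Cantelli for a.s.\ uniform convergence, and passage to the limit using \eqref{eq:2.8}. The only minor deviation is in uniqueness. There you estimate the integral form directly with Cauchy--Schwarz and BDG, whereas the paper applies It\^o's formula to the weighted energy $(\lambda+\delta^2-\alpha\delta)\|u-u_1\|^2+\|v-v_1\|^2$; both arguments close with Gronwall.
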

\begin{proof}
The proof is carried out in three steps by means of the Picard iteration method.\par
\textbf{Step1:} First,we prove the existence. For all $\tau\leq t\leq T$, let $\Phi^{(0)}(t)=\binom{u^{(0)}(t)}{v^{(0)}(t)}=\binom{u_{\tau}}{v_{\tau}}$, $\mathcal{L}_{u^{(0)}(t)}=\mathcal{L}_{u_{\tau}}$. For any $n\in\mathbb{N}^+$, let $\Phi^{(n)}(t)=\binom{u^{(n)}(t)}{v^{(n)}(t)}$ solve the following integral equation	
\begin{equation}\label{eq:3.1}
	\left\{
	\begin{aligned}
		u^{(n)}(t)=&u_{\tau}+\int_{\tau}^{t}v^{(n-1)}(s)ds-\delta \int_{\tau}^{t}u^{(n-1)}(s)ds,\\		
		v^{(n)}(t)=&v_{\tau}-(\lambda+\delta^2-\alpha \delta)\int_{\tau}^{t}u^{(n-1)}(s)ds-(\alpha-\delta)\int_{\tau}^{t}v^{(n-1)}(s)ds-\nu \int_{\tau}^{t}Au^{(n-1)}(s)ds\\&+\int_{\tau}^{t}f(s,u^{(n-1)}(s),\mathcal{L}_{u^{(n-1)}(s)})ds+\int_{\tau}^{t}g(s)ds+\int_{\tau}^{t}\tilde{\sigma}(s, u^{(n-1)}(s),\mathcal{L}_{u^{(n-1)}(s)})dW(s)
	\end{aligned}
	\right.
\end{equation}
with initial condition $\binom{u^{(n)}_{\tau}}{v^{(n)}_{\tau}}=\binom{u_{\tau}}{v_{\tau}}$, so $\binom{u^{(n)}}{v^{(n)}}\in L^2(\Omega,C([\tau,\tau+T]),\ell^2\times\ell^2)$ foe each $n\in\mathbb{N}$.\par
Next, we will show that for all $n\in\mathbb{N}^+$ and $\tau\leq t\leq T$,
\begin{align}\label{eq:3.2}
	\mathbb{E}\{\sup_{\tau\leq\varrho\leq t}\|\Phi^{(n)}(\varrho)\|^2\}\leq\left[\bigstar_1+\bigstar_2(T-\tau)\mathbb{E}\|\Phi_{\tau}\|^2\right]e^{\bigstar_2T},
\end{align}
where $\bigstar_1$ and $\bigstar_2$ are positive constants independent of $n$ and $t$.\\
By \eqref{eq:3.1}, we have for any $n\in\mathbb{N}^+$ and $\tau\leq t\leq T$,
\begin{equation*}
	\left\{
	\begin{aligned}
		\mathbb{E}(\sup_{\tau\leq\varrho\leq t}\|u^{(n)}(\varrho)\|^2)=&3\|u_{\tau}\|^2+3\mathbb{E}(\sup_{\tau\leq\varrho\leq t}\|\int_{\tau}^{\varrho}v^{(n-1)}(s)ds\|^2)+3\delta^2 \mathbb{E}(\sup_{\tau\leq\varrho\leq t}\|\int_{\tau}^{\rho}u^{(n-1)}(s)ds\|^2),\\		
		\mathbb{E}(\sup_{\tau\leq\varrho\leq t}\|v^{(n)}(\varrho)\|^2)=&7\|v_{\tau}\|^2+7(\lambda+\delta^2-\alpha \delta)^2\mathbb{E}(\sup_{\tau\leq\varrho\leq t}\|\int_{\tau}^{\varrho}u^{(n-1)}(s)ds\|^2)\\&+7(\alpha-\delta)^2\mathbb{E}(\sup_{\tau\leq\varrho\leq t}\|\int_{\tau}^{\varrho}v^{(n-1)}(s)ds\|^2)+7\nu^2\mathbb{E}(\sup_{\tau\leq\varrho\leq t}\| \int_{\tau}^{\varrho}Au^{(n-1)}(s)ds\|^2)\\&+7\mathbb{E}(\sup_{\tau\leq\varrho\leq t}\|\int_{\tau}^{\varrho}f(s,u^{(n-1)}(s),\mathcal{L}_{u^{(n-1)}(s)})ds\|^2\\&+7\mathbb{E}(\sup_{\tau\leq\rho\leq t}\|\int_{\tau}^{\varrho}g(s)ds\|^2+7\mathbb{E}(\sup_{\tau\leq\varrho\leq t}\|\int_{\tau}^{\varrho}\tilde{\sigma}(s, u^{(n-1)}(s),\mathcal{L}_{u^{(n-1)}(s)})dW(s)\|^2.
	\end{aligned}
	\right.
\end{equation*}
By using \eqref{eq:2.8}, \eqref{eq:2.13}, \eqref{eq:2.14} and the BDG inequality, we get that
\begin{align*}
	\mathbb{E}(\sup_{\tau\leq\varrho\leq t}\|\Phi^{(n)}(\varrho)\|^2)\leq&7\|\Phi_{\tau}\|^2+7[(\lambda+\delta^2-\alpha \delta)^2+(\alpha-\delta)^2+\delta^2+1](T-\tau)\int_{\tau}^{t}\mathbb{E}(\sup_{\tau\leq\varrho\leq s}\|\Phi^{(n-1)}(\varrho)\|^2)ds\\&+[(2\nu^2+42\|\gamma\|^2_{L^\infty([\tau,T],\ell^2)})(T-\tau)+56c_1\|\kappa\|^2_{L^\infty(\mathbb{R},\ell^2)}]\int_{\tau}^{t}\mathbb{E}(\sup_{\tau\leq\varrho\leq s}\|u^{(n-1)}(\varrho)\|^2)ds\\&+[21(T-\tau)\|\gamma\|^2_{L^\infty([\tau,T],\ell^2)}+7\int_{\tau}^{T}\|g(s)\|^2ds+14c_1\|\chi\|^2_{L^\infty(\mathbb{R},\ell^2)}](T-\tau)\\\leq&\bigstar_1+\bigstar_2\int_{\tau}^{t}\mathbb{E}(\sup_{\tau\leq\varrho\leq s}\|\Phi^{(n-1)}(\varrho)\|^2)ds,
\end{align*}
where $c_1$ follows from the BDG inequality. Thus we obtain that for any $m\in\mathbb{N}^+$ and $\tau\leq t\leq T$,
\begin{align*}
	\sup_{1\leq n\leq m}[\mathbb{E}(\sup_{\tau\leq\varrho\leq t}\|\Phi^{(n)}(\varrho)\|^2)]\leq\bigstar_1+\bigstar_2\int_{\tau}^{t}\sup_{1\leq n\leq m}[\mathbb{E}(\sup_{\tau\leq\varrho\leq s}\|\Phi^{(n-1)}(\varrho)\|^2)]ds,
\end{align*}
which implies,
\begin{align*}
	\sup_{1\leq n\leq m}[\mathbb{E}(\sup_{\tau\leq\varrho\leq t}\|\Phi^{(n)}(\varrho)\|^2)]&\leq\bigstar_1+\bigstar_2\int_{\tau}^{t}\left(\mathbb{E}\|\Phi_{\tau}\|^2+\sup_{1\leq n\leq m}[\mathbb{E}(\sup_{\tau\leq\varrho\leq s}\|\Phi^{(n)}(\varrho)\|^2)]\right)ds\\&\leq\bigstar_1+\bigstar_2(T-\tau)\mathbb{E}\|\Phi_{\tau}\|^2+\bigstar_2\int_{\tau}^{t}\sup_{1\leq n\leq m}[\mathbb{E}(\sup_{\tau\leq\varrho\leq s}\|\Phi^{(n)}(\varrho)\|^2)]ds.
\end{align*}
Utilizing the Gronwall inequality, we have that for any $m\in\mathbb{N}^+$ and $\tau\leq t\leq T$,
\begin{align*}
	\sup_{1\leq n\leq m}[\mathbb{E}(\sup_{\tau\leq\varrho\leq t}\|\Phi^{(n)}(\varrho)\|^2)]\leq\left[\bigstar_1+\bigstar_2(T-\tau)\mathbb{E}\|\Phi_{\tau}\|^2\right]e^{\bigstar_2T},
\end{align*}
then by the arbitrariness of $m$, \eqref{eq:3.2} holds.\par
To make the notation simpler, we denote $\Xi^{(n)}(t)=\binom{a^{(n)}(t)}{b^{(n)}(t)}:=\Phi^{(n+1)}(t)-\Phi^{(n)}(t)=\binom{u^{(n+1)}(t)-u^{(n)}(t)}{v^{(n+1)}(t)-v^{(n)}(t)}$ for all $n\in\mathbb{N}$ and $t\in[\tau,T]$. It's clear that $\|\Xi^{(n)}(t)\|^2=\|a^{(n)}(t)\|^2+\|b^{(n)}(t)\|^2$. As $n=0$, $\Xi^{(0)}(t):=\Phi^{(1)}(t)-\Phi_{\tau}$. Let

\begin{align*}
	M:=&8(T-\tau)\{[(\lambda+\delta^2-\alpha \delta)^2+(\alpha-\delta)^2+\delta^2+16\nu^2+1]+6\|\gamma\|^2_{L^\infty([\tau,T],\ell^2)}(T-\tau)\\&+8c_1\|\kappa\|^2_{L^\infty(\mathbb{R},\ell^2)}(T-\tau)\}\mathbb{E}\|\Phi_{\tau}\|^2\\&+8(T-\tau)\{3\|\gamma\|^2_{L^\infty([\tau,T],\ell^2)}(T-\tau)+\int_{\tau}^{T}\|g(s)\|^2ds+2c_1\|\chi\|^2_{L^\infty(\mathbb{R},\ell^2)}\}
\end{align*}
and
\begin{align*}
	N:=7(T-\tau)\{[(\lambda+\delta^2-\alpha \delta)^2+(\alpha-\delta)^2+\delta^2+16\nu^2+1]+4\|\psi\|^2_{L^\infty([\tau,T],\ell^2)}\}+28c_1\|\kappa \|^2_{L^\infty(\mathbb{R},\ell^2)}.
\end{align*}
Our initial inductive claim states that for every $ n \in \mathbb{N} $ and $ \tau \leq t \leq T $,
\begin{align}\label{eq:3.3}
 \mathbb{E}\left(\sup_{\tau \leq \varrho \leq t} \|\Xi^{(n)}(\varrho)\|^2\right) \leq \frac{M[N(t - \tau)]^n}{n!}.
\end{align}
We now proceed to verify this claim. In fact, when $ n = 0 $, it follows that for all $ \tau \leq t \leq T $,
\begin{equation*}
	\left\{
	\begin{aligned}
		u^{(1)}(t)=&u_{\tau}+\int_{\tau}^{t}v^{(0)}(s)ds-\delta \int_{\tau}^{t}u^{(0)}(s)ds,\\		
		v^{(1)}(t)=&v_{\tau}-(\lambda+\delta^2-\alpha \delta)\int_{\tau}^{t}u^{(0)}(s)ds-(\alpha-\delta)\int_{\tau}^{t}v^{(0)}(s)ds-\nu \int_{\tau}^{t}Au^{(0)}(s)ds\\&+\int_{\tau}^{t}f(s,u^{(0)}(s),\mathcal{L}_{u^{(0)}(s)})ds+\int_{\tau}^{t}g(s)ds+\int_{\tau}^{t}\tilde{\sigma}(s, u^{(0)}(s),\mathcal{L}_{u^{(0)}(s)})dW(s)
	\end{aligned}
	\right.
\end{equation*}
Along with \eqref{eq:2.10}, we obtain that for all $ \tau \leq t \leq T $,
\begin{align*}
	\mathbb{E}\left(\sup_{\tau \leq \varrho \leq t} \|\Xi^{(n)}(\varrho)\|^2\right) \leq&6(T-\tau)^2[(\lambda+\delta^2-\alpha \delta)^2+(\alpha-\delta)^2+\delta^2+16\nu^2+1]\mathbb{E}\|\Phi_{\tau}\|^2\\&+6(T-\tau)\int_{\tau}^{T}\|g(s)\|^2ds\\&+6(T-\tau)\int_{\tau}^{t}\mathbb{E}\|f(s,u^{(0)}(s),\mathcal{L}_{u^{(0)}(s)})\|^2ds\\&+6\mathbb{E}(\sup_{\tau \leq \varrho \leq t}\int_{\tau}^{\varrho}\tilde{\sigma}(s, u^{(0)}(s),\mathcal{L}_{u^{(0)}(s)})dW(s) ).
\end{align*}
By using \eqref{eq:2.8} and \eqref{eq:2.13},  we find that for all $ \tau \leq t \leq T $,
\begin{align*}
&6(T-\tau)\int_{\tau}^{t}\mathbb{E}\|f(s,u^{(0)}(s),\mathcal{L}_{u^{(0)}(s)})\|^2ds\\\leq&18(T-\tau)\|\gamma\|^2_{L^\infty([\tau,T],\ell^2)}[(T-\tau)+2(T-\tau)\mathbb{E}\|u_{\tau}\|^2].
\end{align*}
By \eqref{eq:2.8}, \eqref{eq:2.14} and the BDG inequality, we have  that for all $ \tau \leq t \leq T $,
\begin{align*}
	&6\mathbb{E}(\sup_{\tau \leq \varrho \leq t}\int_{\tau}^{\varrho}\tilde{\sigma}(s, u^{(n-1)}(s),\mathcal{L}_{u^{(n-1)}(s)})dW(s) )\\\leq&6c_1\mathbb{E}(\int_{\tau}^{t}\|\tilde{\sigma}(s, u^{(0)}(s),\mathcal{L}_{u^{(0)}(s)})\|^2_{L_2(\ell^2,\ell^2)})\\\leq&48c_1(T-\tau)\|\kappa \|^2_{L^\infty(\mathbb{R},\ell^2)}\mathbb{E}\|u_{\tau}\|^2+12c_1(T-\tau)\|\chi\|^2_{L^\infty(\mathbb{R},\ell^2)},
\end{align*}
where $c_1$ follows from the BDG inequality. Given the preceding reasoning, \eqref{eq:3.3} is valid for $n=0$. Next, suppose that \eqref{eq:3.3}  holds for $k=n-1$. In order to establish the statement for $k=n$, we begin by noting that for all $ \tau \leq t \leq T $ that
\begin{equation*}
	\left\{
	\begin{aligned}
		a^{(n)}(t)=&\int_{\tau}^{t}b^{(n-1)}(s)ds-\delta \int_{\tau}^{t}a^{(n-1)}(s)ds,\\		
		b^{(n)}(t)=&-(\lambda+\delta^2-\alpha \delta)\int_{\tau}^{t}a^{(n-1)}(s)ds-(\alpha-\delta)\int_{\tau}^{t}b^{(n-1)}(s)ds-\nu \int_{\tau}^{t}Aa^{(n-1)}(s)ds\\&+\int_{\tau}^{t}[f(s,u^{(n)}(s),\mathcal{L}_{u^{(n)}(s)})-f(s,u^{(n-1)}(s),\mathcal{L}_{u^{(n-1)}(s)})]ds\\&+\int_{\tau}^{t}[\tilde{\sigma}(s, u^{(n)}(s),\mathcal{L}_{u^{(n)}(s)})-\tilde{\sigma}(s, u^{(n-1)}(s),\mathcal{L}_{u^{(n-1)}(s)})]dW(s).
	\end{aligned}
	\right.
\end{equation*}
Then, we have  that for all $ \tau \leq t \leq T $,
\begin{equation}\label{eq:3.4}
	\begin{aligned}
		\mathbb{E}(\sup_{\tau \leq \varrho \leq t}\|\Xi^{(n)}(\varrho)\|^2)\leq&5(T-\tau)[(\lambda+\delta^2-\alpha \delta)^2+(\alpha-\delta)^2+\delta^2+16\nu^2+1]\int_{\tau}^{t}\mathbb{E}(\sup_{\tau \leq \varrho \leq t}\|\Xi^{(n-1)}(\varrho)\|^2)ds\\&+5(T-\tau)\int_{\tau}^{t}\mathbb{E}\|f(s,u^{(n)}(s),\mathcal{L}_{u^{(n)}(s)})-f(s,u^{(n-1)}(s),\mathcal{L}_{u^{(n-1)}(s)})\|^2ds\\&+5\mathbb{E}(\sup_{\tau \leq \varrho \leq t}\|\int_{\tau}^{\varrho}[\tilde{\sigma}(s, u^{(n)}(s),\mathcal{L}_{u^{(n)}(s)})-\tilde{\sigma}(s, u^{(n-1)}(s),\mathcal{L}_{u^{(n-1)}(s)})]dW(s)\|^2).
	\end{aligned}
\end{equation}
For the second term in the right-hand side of \eqref{eq:3.4}, by using \eqref{eq:2.8} and  \eqref{eq:2.11},  we find that for all $ \tau \leq t \leq T $,
\begin{equation}\label{eq:3.5}
	\begin{aligned}
	&5(T-\tau)\int_{\tau}^{t}\mathbb{E}\|f(s,u^{(n)}(s),\mathcal{L}_{u^{(n)}(s)})-f(s,u^{(n-1)}(s),\mathcal{L}_{u^{(n-1)}(s)})\|^2ds\\\leq&	20(T-\tau)\|\psi\|^2_{L^\infty([\tau,T],\ell^2)}\int_{\tau}^{t}\mathbb{E}(\sup_{\tau \leq \varrho \leq s}\|a^{(n-1)}(\varrho)\|^2)ds.
	\end{aligned}
\end{equation}
For the last term in the right-hand side of \eqref{eq:3.4}, by using the BDG inequality, \eqref{eq:2.8} and  \eqref{eq:2.12},  we find that for all $ \tau \leq t \leq T $,
\begin{equation}\label{eq:3.6}
	\begin{aligned}
		&5\mathbb{E}(\sup_{\tau \leq \varrho \leq t}\|\int_{\tau}^{\varrho}[\tilde{\sigma}(s, u^{(n)}(s),\mathcal{L}_{u^{(n)}(s)})-\tilde{\sigma}(s, u^{(n-1)}(s),\mathcal{L}_{u^{(n-1)}(s)})]dW(s)\|^2)\\\leq&5c_1\mathbb{E}(\int_{\tau}^{t}\|\tilde{\sigma}(s, u^{(n)}(s),\mathcal{L}_{u^{(n)}(s)})-\tilde{\sigma}(s, u^{(n-1)}(s),\mathcal{L}_{u^{(n-1)}(s)})\|_{L_2(\ell^2,\ell^2)}^2ds)\\\leq&20c_1\|\kappa \|^2_{L^\infty(\mathbb{R},\ell^2)}\int_{\tau}^{t}\mathbb{E}(\sup_{\tau \leq \varrho \leq s}\|a^{(n-1)}(\varrho)\|^2)ds.
	\end{aligned}
\end{equation}
Considering \eqref{eq:3.4}-\eqref{eq:3.6}, from the induction hypothesis, it follows that for all $ t \in [\tau, T] $,
\begin{align*}
	\mathbb{E}(\sup_{\tau \leq \varrho
		 \leq t} \|\Xi^{(n)}(\varrho)\|^2) \leq N\int_{\tau}^{t}\mathbb{E}(\sup_{\tau \leq \varrho
		 \leq t} \|\Xi^{(n-1)}(\varrho)\|^2)\leq N \int_\tau^t \frac{M[N(s-\tau)]^{n-1}}{(n-1)!} ds = \frac{M[N(t - \tau)]^n}{n!}.
\end{align*}
Hence, statement \eqref{eq:3.3} holds for every $ n \in \mathbb{N} $.\par
Applying Chebyshev's inequality yields
\begin{align*}
	\mathbb{P}(\sup_{\tau \leq \varrho \leq T} \|\Xi^{(n)}(\varrho)\| > \frac{1}{2^n}) \leq \frac{M(4NT)^n}{n!}.
\end{align*}
Due to the convergence of the positive series $	\sum_{n=0}^\infty \frac{M(4NT)^n}{n!}$
along with the Borel-Cantelli lemma, for almost every $ \omega \in \Omega $ one can find a positive integer $ N = N(\omega) $ such that for every $ n > N $,
\begin{align*} 
	\sup_{\tau \leq \varrho \leq T} \|\Xi^{(n)}(\varrho)\| \leq \frac{1}{2^n},
\end{align*}
which implies that the partial sum sequence
\begin{align*} 
	\Phi^{(n)}(t)=\Phi^{(0)} + \sum_{k=0}^{n-1}\Xi^{(k)}(t) 
\end{align*}
converges uniformly for $ t \in [\tau, T] $ for almost every $ \omega \in \Omega $.  
Let the limit be denoted by $ \Phi(t)=\binom{u(t)}{v(t)} $ for $ t \in [\tau, T] $.
Evidently, $ \Phi $ is $ \mathcal{F}_t $-adapted when $ t \geq \tau $.  
Additionally, using \eqref{eq:3.2} and the dominated convergence theorem, we obtain
\begin{align} \label{eq:3.7}
	\mathbb{E} \left( \sup_{\tau \leq \varrho \leq T} \|\Phi^{(n)}(\varrho) - \Phi(\varrho)\|^2 \right) \to 0, \quad \text{as } n \to \infty.
\end{align}\par
To confirm that $ \Phi(t) $ satisfies equation \eqref{eq:2.16}, by virtue of \eqref{eq:2.10} and \eqref{eq:3.7}, we derive for each $ t \in [\tau, T] $ that
\begin{equation}\label{eq:3.8}
	\begin{aligned}
		\mathbb{E}(&\|\int_{\tau}^{t}(v^{(n-1)}(s)-v(s))ds-\delta\int_{\tau}^{t}(u^{(n-1)}(s)-u(s))ds-(\lambda+\delta^2-\alpha \delta)\int_{\tau}^{t}(u^{(n-1)}(s)-u(s))d\\&-s(\alpha-\delta)\int_{\tau}^{t}(v^{(n-1)}(s)-v(s))ds-\nu\int_{\tau}^{t}A(u^{(n-1)}(s)-u(s))ds\|^2)\\\leq&5(T-\tau)[(\lambda+\delta^2-\alpha \delta)^2+(\alpha-\delta)^2+\delta^2+16\nu^2+1]\int_{\tau}^{t}\mathbb{E}(\sup_{\tau \leq \varrho \leq s}\|\Phi^{(n-1)}(\varrho) - \Phi(\varrho)\|^2)ds\\\to& 0, \quad \text{as } n \to \infty.
	\end{aligned}
\end{equation}
By using \eqref{eq:2.8}, \eqref{eq:2.11}, \eqref{eq:2.12} and \eqref{eq:3.7}, we find for each $ t \in [\tau, T] $,
\begin{equation}\label{eq:3.9}
	\begin{aligned}
		&\mathbb{E}\|\int_{\tau}^{t}[f(s,u^{(n-1)}(s),\mathcal{L}_{u^{(n-1)}(s)})-f(s,u(s),\mathcal{L}_{u(s)})]ds\|^2\\&+\mathbb{E}\|\int_{\tau}^{t}[\tilde{\sigma}(s, u^{(n-1)}(s),\mathcal{L}_{u^{(n-1)}(s)})-\tilde{\sigma}(s, u(s),\mathcal{L}_{u(s)})]dW(s)\|^2\\\leq&\left[2(T-\tau)\|\psi\|^2_{L^\infty([\tau,T],\ell^2)}+4c_1\|\kappa \|^2_{L^\infty(\mathbb{R},\ell^2)}\right]\int_{\tau}^{t}\mathbb{E}(\sup_{\tau \leq \varrho \leq s}\|\Phi^{(n-1)}(\varrho) - \Phi(\varrho)\|^2)ds\\\to& 0, \quad \text{as } n \to \infty.
	\end{aligned}
\end{equation}
Together with \eqref{eq:3.8} and \eqref{eq:3.9}, it follows that $ \Phi $ satisfies \eqref{eq:2.17}-\eqref{eq:2.18} by taking the limit $ n \to \infty $ in \eqref{eq:3.1}. Furthermore, by virtue of \eqref{eq:3.7}, one can choose $ N_0 > 0 $ satisfying
\begin{align*}
	\mathbb{E} \left( \sup_{\tau \leq \varrho \leq T} \|\Phi^{(N_0)}(\varrho) - \Phi(\varrho)\|^2 \right) < \frac{1}{2}.
\end{align*}
Applying \eqref{eq:3.2} once more yields
\begin{equation}\label{eq:3.10}
\begin{aligned}
	\mathbb{E} (\sup_{\tau \leq \varrho \leq T} \|\Phi(\varrho)\|^2 )&\leq 2\mathbb{E} (\sup_{\tau \leq \varrho \leq T} \|\Phi^{(N_0)}(\varrho) - \Phi(\varrho)\|^2) + 2\mathbb{E}( \sup_{\tau \leq \varrho \leq T} \|\Phi^{(N_0)}(\varrho)\|^2) \\
	&\leq 1 + 2(\bigstar_1 + \bigstar_2(T - \tau)\mathbb{E}\|\Phi_{\tau}\|^2)e^{\bigstar_2T}. 
\end{aligned}
\end{equation}
Hence $\Phi\in L^2(\Omega,C([\tau,T],\ell^2\times\ell^2))$.\par
\textbf{Step2:} Next, we prove the uniqueness. Let $\Phi(t)=\binom{u(t)}{v(t)}$ and $\Phi_1(t)=\binom{u_1(t)}{v_1(t)}$ be two strong solutions of \eqref{eq:2.16} in $\Phi\in L^2(\Omega,C([\tau,T],\ell^2\times\ell^2))$. For any $ \tau \leq t \leq T $, notice that
\begin{equation*}
	\left\{
	\begin{aligned}
		u(t)-u_1(t)=&\int_{\tau}^{t}(v(s)-v_1(s))ds-\delta \int_{\tau}^{t}(u(s)-u_1(s))ds,\\		
		v(t)-v_1(t)=&-(\lambda+\delta^2-\alpha \delta)\int_{\tau}^{t}(u(s)-u_1(s))ds-(\alpha-\delta)\int_{\tau}^{t}(v(s)-v_1(s))ds\\&-\nu \int_{\tau}^{t}A(u(s)-u_1(s))ds+\int_{\tau}^{t}[f(s,u(s),\mathcal{L}_{u(s)})-f(s,u_1(s),\mathcal{L}_{u_1(s)})]ds\\&+\int_{\tau}^{t}[\tilde{\sigma}(s, u(s),\mathcal{L}_{u(s)})-\tilde{\sigma}(s, u_1(s),\mathcal{L}_{u_1(s)})]dW(s).
	\end{aligned}
	\right.
\end{equation*}
Applying the $It\hat{o}$ formula, we get that for all $ \tau \leq t \leq T $,
\begin{equation*}
	\begin{aligned}
		\|u(t)-u_1(t)\|^2=2\int_{\tau}^{t}(u(s)-u_1(s),v(s)-v_1(s))ds-2\delta \int_{\tau}^{t}\|u(s)-u_1(s)\|^2ds,\\		
	\end{aligned}
\end{equation*}
and
\begin{equation*}
	\begin{aligned}		
		\|v(t)-v_1(t)\|^2=&-2(\lambda+\delta^2-\alpha \delta)\int_{\tau}^{t}(u(s)-u_1(s),v(s)-v_1(s))ds-2(\alpha-\delta)\int_{\tau}^{t}\|v(s)-v_1(s)\|^2ds\\&-2\nu \int_{\tau}^{t}(A(u(s)-u_1(s)),v(s)-v_1(s))ds\\&+2\int_{\tau}^{t}(v(s)-v_1(s),f(s,u(s),\mathcal{L}_{u(s)})-f(s,u_1(s),\mathcal{L}_{u_1(s)}))ds\\&+2\int_{\tau}^{t}(v(s)-v_1(s),[\tilde{\sigma}(s, u(s),\mathcal{L}_{u(s)})-\tilde{\sigma}(s, u_1(s),\mathcal{L}_{u_1(s)})]dW(s))\\&+\int_{\tau}^{t}\|\tilde{\sigma}(s, u(s),\mathcal{L}_{u(s)})-\tilde{\sigma}(s, u_1(s),\mathcal{L}_{u_1(s)})\|^2_{L_2(\ell^2,\ell^2)}ds.
	\end{aligned}
\end{equation*}
Therefore, we have
\begin{equation}\label{eq:3.11}
	\begin{aligned}
		&\mathbb{E}\{\sup_{\tau \leq \varrho \leq t}[(\lambda+\delta^2-\alpha \delta)\|u(\varrho)-u_1(\varrho)\|^2+\|v(\varrho)-v_1(\varrho)\|^2]\}\\\leq&2\nu\left|\int_{\tau}^{t}\mathbb{E}(A(u(s)-u_1(s)),v(s)-v_1(s))ds\right|\\&+2\int_{\tau}^{t}\mathbb{E}(v(s)-v_1(s),f(s,u(s),\mathcal{L}_{u(s)})-f(s,u_1(s),\mathcal{L}_{u_1(s)}))ds\\&+\int_{\tau}^{t}\mathbb{E}\|\tilde{\sigma}(s, u(s),\mathcal{L}_{u(s)})-\tilde{\sigma}(s, u_1(s),\mathcal{L}_{u_1(s)})\|^2_{L_2(\ell^2,\ell^2)}ds\\&+2\mathbb{E}\{\sup_{\tau \leq \varrho \leq t}\|\int_{\tau}^{\varrho}(v(s)-v_1(s),[\tilde{\sigma}(s, u(s),\mathcal{L}_{u(s)})-\tilde{\sigma}(s, u_1(s),\mathcal{L}_{u_1(s)})]dW(s))\|\}.
	\end{aligned}
\end{equation}
We estimate each term in the right-hand side of \eqref{eq:3.11} individually. For the first term, employing \eqref{eq:2.9} and \eqref{eq:2.10} yields that for all $ \tau \leq t \leq T $,
\begin{equation}\label{eq:3.12}
	\begin{aligned}
		&2\nu\left|\int_{\tau}^{t}\mathbb{E}(A(u(s)-u_1(s)),v(s)-v_1(s))ds\right|\\\leq&2\nu\int_{\tau}^{t}\mathbb{E}\|B(u(s)-u_1(s))\|\|B(v(s)-v_1(s))\|ds\\\leq&\nu\int_{\tau}^{t}\mathbb{E}(\|B(u(s)-u_1(s))\|^2+\|B(v(s)-v_1(s))\|^2)ds\\\leq&4\nu\int_{\tau}^{t}(\sup_{\tau \leq \varrho \leq s}\|\Phi(\varrho)-\Phi_1(\varrho)\|^2)ds.
	\end{aligned}
\end{equation}
For the second term, we infer from \eqref{eq:2.11} and \eqref{eq:2.8} that for all $ \tau \leq t \leq T $,
\begin{equation}\label{eq:3.13}
	\begin{aligned}
		&2\int_{\tau}^{t}\mathbb{E}(v(s)-v_1(s),f(s,u(s),\mathcal{L}_{u(s)})-f(s,u_1(s),\mathcal{L}_{u_1(s)}))ds\\\leq&\int_{\tau}^{t}\mathbb{E}\|v(s)-v_1(s)\|^2ds+\int_{\tau}^{t}\mathbb{E}\|f(s,u(s),\mathcal{L}_{u(s)})-f(s,u_1(s),\mathcal{L}_{u_1(s)})\|^2ds\\\leq&\int_{\tau}^{t}\mathbb{E}\|v(s)-v_1(s)\|^2ds+4\|\psi\|^2_{L^\infty([\tau,T],\ell^2)}\int_{\tau}^{t}\mathbb{E}\|u(s)-u_1(s)\|^2ds\\\leq&(1+4\|\psi\|^2_{L^\infty([\tau,T],\ell^2)})\int_{\tau}^{t}(\sup_{\tau \leq \varrho \leq s}\|\Phi(\varrho)-\Phi_1(\varrho)\|^2)ds.
	\end{aligned}
\end{equation}
For the third term, by \eqref{eq:2.12} and \eqref{eq:2.8}, we get that for all $ \tau \leq t \leq T $,
\begin{equation}\label{eq:3.14}
	\begin{aligned}
		&\int_{\tau}^{t}\mathbb{E}\|\tilde{\sigma}(s, u(s),\mathcal{L}_{u(s)})-\tilde{\sigma}(s, u_1(s),\mathcal{L}_{u_1(s)})\|^2_{L_2(\ell^2,\ell^2)}ds\\\leq&\int_{\tau}^{t}2\|\kappa(s)\|^2\mathbb{E}(\|u(s)-u_1(s)\|^2+\mathbb{E}\|u(s)-u_1(s)\|^2)ds\\\leq&4\|\kappa\|^2_{L^\infty(\mathbb{R},\ell^2)}\int_{\tau}^{t}(\sup_{\tau \leq \varrho \leq s}\|\Phi(\varrho)-\Phi_1(\varrho)\|^2)ds.
	\end{aligned}
\end{equation}
For the last term, by using the BDG inequality and \eqref{eq:3.13}, we obtain that for all $ \tau \leq t \leq T $,
\begin{equation}\label{eq:3.15}
	\begin{aligned}
		&2\mathbb{E}\{\sup_{\tau \leq \varrho \leq t}\|\int_{\tau}^{\varrho}(v(s)-v_1(s),[\tilde{\sigma}(s, u(s),\mathcal{L}_{u(s)})-\tilde{\sigma}(s, u_1(s),\mathcal{L}_{u_1(s)})]dW(s))\|\}
		\\\leq&2c_1\mathbb{E}\{\int_{\tau}^{t}\|(v(s)-v_1(s),\tilde{\sigma}(s, u(s),\mathcal{L}_{u(s)})-\tilde{\sigma}(s, u_1(s),\mathcal{L}_{u_1(s)}))\|^2ds\}^{\frac{1}{2}}
		\\\leq&2c_1\mathbb{E}\{\int_{\tau}^{t}\|v(s)-v_1(s)\|^2\|\tilde{\sigma}(s, u(s),\mathcal{L}_{u(s)})-\tilde{\sigma}(s, u_1(s),\mathcal{L}_{u_1(s)})\|^2_{L_2(\ell^2,\ell^2)}ds\}^{\frac{1}{2}}
		\\\leq&2c_1\mathbb{E}(\sup_{\tau \leq \varrho \leq t}\|v(\varrho)-v_1(\varrho)\|^2)^{\frac{1}{2}}(\int_{\tau}^{t}\|\tilde{\sigma}(s, u(s),\mathcal{L}_{u(s)})-\tilde{\sigma}(s, u_1(s),\mathcal{L}_{u_1(s)})\|^2_{L_2(\ell^2,\ell^2)}ds)^{\frac{1}{2}}
		\\\leq&\frac{1}{2}\mathbb{E}(\sup_{\tau \leq \varrho \leq t}\|v(\varrho)-v_1(\varrho)\|^2)+2c_1^2\mathbb{E}\|\tilde{\sigma}(s, u(s),\mathcal{L}_{u(s)})-\tilde{\sigma}(s, u_1(s),\mathcal{L}_{u_1(s)})\|^2_{L_2(\ell^2,\ell^2)}ds
		\\\leq&\frac{1}{2}\mathbb{E}(\sup_{\tau \leq \varrho \leq t}\|v(\varrho)-v_1(\varrho)\|^2)+8c_1^2\|\kappa\|^2_{L^\infty(\mathbb{R},\ell^2)}\int_{\tau}^{t}(\sup_{\tau \leq \varrho \leq s}\|\Phi(\varrho)-\Phi_1(\varrho)\|^2)ds.
	\end{aligned}
\end{equation}
Combining \eqref{eq:3.11}-\eqref{eq:3.15}, we have that for all $ \tau \leq t \leq T $,
\begin{equation}\label{eq:3.16}
	\begin{aligned}
		\mathbb{E}(\sup_{\tau \leq \varrho \leq t}\|\Phi(\varrho)-\Phi_1(\varrho)\|^2)\leq[(&\lambda+\delta^2-\alpha \delta)\wedge \frac{1}{2}]^{-1}[1+4\nu+4\|\psi\|^2_{L^\infty([\tau,T],\ell^2)}\\&+4(1+2c_1^2)\|\kappa\|^2_{L^\infty(\mathbb{R},\ell^2)}]\int_{\tau}^{t}(\sup_{\tau \leq \varrho \leq s}\|\Phi(\varrho)-\Phi_1(\varrho)\|^2)ds.
	\end{aligned}
\end{equation}
Employing the Gronwall inequality, we see
\begin{equation}\label{eq:3.17}
	\begin{aligned}
		\mathbb{E}(\sup_{\tau \leq \varrho \leq T}\|\Phi(\varrho)-\Phi_1(\varrho)\|^2)=0
	\end{aligned}
\end{equation}
which implies the uniqueness.\par
\textbf{Step3:} Finally, we prove that $\mathcal{L}_{\Phi_{(t)}}\in\mathcal{P}^2(\ell^2\times\ell^2)$ for any $\tau\leq t\leq T$. Applying the $It\hat{o}$ formula and using \eqref{eq:2.17}-\eqref{eq:2.18}, we get that
\begin{equation*}
	\begin{aligned}
		\|u(t)\|^2=\|u_{\tau}\|^2+2\int_{\tau}^{t}(u(s),v(s))ds-2\delta \int_{\tau}^{t}\|u(s)\|^2ds,\\		
	\end{aligned}
\end{equation*}
and
\begin{equation*}
	\begin{aligned}		
		\|v(t)\|^2=&\|v_{\tau}\|^2-2(\lambda+\delta^2-\alpha \delta)\int_{\tau}^{t}(u(s),v(s))ds-2(\alpha-\delta)\int_{\tau}^{t}\|v(s)\|^2ds-2\nu \int_{\tau}^{t}(Au(s),v(s))ds\\&+2\int_{\tau}^{t}(v(s),g(s))ds+2\int_{\tau}^{t}(v(s),f(s,u(s),\mathcal{L}_{u(s)}))ds\\&+2\int_{\tau}^{t}(v(s),\tilde{\sigma}(s, u(s),\mathcal{L}_{u(s)})dW(s))+\int_{\tau}^{t}\|\tilde{\sigma}(s, u(s),\mathcal{L}_{u(s)})\|^2_{L_2(\ell^2,\ell^2)}ds.
	\end{aligned}
\end{equation*}
which implies that for any $\tau\leq t\leq T$,
\begin{equation}\label{eq:3.18}
	\begin{aligned}
		&\mathbb{E}(\sup_{\tau \leq \varrho \leq t}[(\lambda+\delta^2-\alpha \delta)\|u(\varrho)\|^2+\|v(\varrho)\|^2])\\\leq&\mathbb{E}[(\lambda+\delta^2-\alpha \delta)\|u_{\tau}\|^2+\|v_{\tau}\|^2]+2\nu\left|\int_{\tau}^{t}\mathbb{E}(Au(s),v(s))ds\right|\\&+2\int_{\tau}^{t}\mathbb{E}(v(s),g(s))ds+2\int_{\tau}^{t}\mathbb{E}(v(s),f(s,u(s),\mathcal{L}_{u(s)}))ds\\&+\int_{\tau}^{t}\mathbb{E}\|\tilde{\sigma}(s, u(s),\mathcal{L}_{u(s)})\|^2_{L_2(\ell^2,\ell^2)}ds+2\mathbb{E}(\sup_{\tau \leq \varrho \leq t}\|\int_{\tau}^{\varrho}(v(s),\tilde{\sigma}(s, u(s),\mathcal{L}_{u(s)})dW(s))\|).
	\end{aligned}
\end{equation}
 For the second term on the right-hand side of \eqref{eq:3.18}, by using \eqref{eq:2.9} and \eqref{eq:2.10} we have that for all $ \tau \leq t \leq T $,
\begin{equation}\label{eq:3.19}
	\begin{aligned}
		2\nu\left|\int_{\tau}^{t}\mathbb{E}(Au(s),v(s))ds\right|\leq&\nu\int_{\tau}^{t}\mathbb{E}(\|Bu(s)\|^2+\|Bv(s)\|^2)ds\\\leq&4\nu\int_{\tau}^{t}(\sup_{\tau \leq \varrho \leq s}\|\Phi(\varrho)\|^2)ds.
	\end{aligned}
\end{equation}
For the third term on the right-hand side of \eqref{eq:3.18}, by using Holder inequality and Young inequality, we find that for all $ \tau \leq t \leq T $,
\begin{equation}\label{eq:3.20}
	\begin{aligned}
		2\int_{\tau}^{t}\mathbb{E}(v(s),g(s))ds\leq&\int_{\tau}^{t}\mathbb{E}\|v(s)\|^2ds+\int_{\tau}^{t}\mathbb{E}\|g(s)\|^2\\\leq&\int_{\tau}^{t}(\sup_{\tau \leq \varrho \leq s}\|\Phi(\varrho)\|^2)ds+\int_{\tau}^{T}\mathbb{E}\|g(s)\|^2.
	\end{aligned}
\end{equation}
For the fourth term on the right-hand side of \eqref{eq:3.18}, by using \eqref{eq:2.13} and \eqref{eq:2.8}, we get that for all $ \tau \leq t \leq T $,
\begin{equation}\label{eq:3.21}
	\begin{aligned}
		2\int_{\tau}^{t}\mathbb{E}(v(s),f(s,u(s),\mathcal{L}_{u(s)}))ds\leq&\int_{\tau}^{t}\mathbb{E}\|v(s)\|^2ds+\int_{\tau}^{t}\mathbb{E}\|f(s,u(s),\mathcal{L}_{u(s)})\|^2ds\\\leq&\int_{\tau}^{t}\mathbb{E}\|v(s)\|^2ds+\int_{\tau}^{t}\mathbb{E}(3\|\gamma(s)\|^2(1+\|u(s)\|^2+\mathbb{E}\|u(s)\|^2))ds+\\\leq&(1+6\|\gamma\|^2_{L^\infty([\tau,T],\ell^2)})\int_{\tau}^{t}(\sup_{\tau \leq \varrho \leq s}\|\Phi(\varrho)\|^2)ds+3(T-\tau)\|\gamma\|^2_{L^\infty([\tau,T],\ell^2)}.
	\end{aligned}
\end{equation}
For the fifth term on the right-hand side of \eqref{eq:3.18}, by using \eqref{eq:2.14} and \eqref{eq:2.8}, we know that for all $ \tau \leq t \leq T $,
\begin{equation}\label{eq:3.22}
	\begin{aligned}
		\int_{\tau}^{t}\mathbb{E}\|\tilde{\sigma}(s, u(s),\mathcal{L}_{u(s)})\|^2_{L_2(\ell^2,\ell^2)}ds\leq&8\|\kappa\|^2_{L^\infty(\mathbb{R},\ell^2)}\int_{\tau}^{t}(\sup_{\tau \leq \varrho \leq s}\|\Phi(\varrho)\|^2)ds+2(T-\tau)\|\chi\|^2_{L^\infty(\mathbb{R},\ell^2)}.
	\end{aligned}
\end{equation}
For the fourth term on the right-hand side of \eqref{eq:3.18}, by using the BDG inequality and \eqref{eq:3.22}, we get that for all $ \tau \leq t \leq T $,
\begin{equation}\label{eq:3.23}
	\begin{aligned}
		&2\mathbb{E}(\sup_{\tau \leq \varrho \leq t}\|\int_{\tau}^{\varrho}(v(s),\tilde{\sigma}(s, u(s),\mathcal{L}_{u(s)})dW(s))\|)
		\\\leq&2c_1\mathbb{E}\{\int_{\tau}^{t}\|(v(s),\tilde{\sigma}(s, u(s),\mathcal{L}_{u(s)}))\|^2ds\}^{\frac{1}{2}}
		\\\leq&2c_1\mathbb{E}\{\int_{\tau}^{t}\|v(s)\|^2\|\tilde{\sigma}(s, u(s),\mathcal{L}_{u(s)})\|^2_{L_2(\ell^2,\ell^2)}ds\}^{\frac{1}{2}}
		\\\leq&2c_1\mathbb{E}(\sup_{\tau \leq \varrho \leq t}\|v(\varrho)\|^2)^{\frac{1}{2}}(\int_{\tau}^{t}\|\tilde{\sigma}(s, u(s),\mathcal{L}_{u(s)})\|^2_{L_2(\ell^2,\ell^2)}ds)^{\frac{1}{2}}
		\\\leq&\frac{1}{2}\mathbb{E}(\sup_{\tau \leq \varrho \leq t}\|v(\varrho)\|^2)+2c_1^2\mathbb{E}\|\tilde{\sigma}(s, u(s),\mathcal{L}_{u(s)})\|^2_{L_2(\ell^2,\ell^2)}ds
		\\\leq&\frac{1}{2}\mathbb{E}(\sup_{\tau \leq \varrho \leq t}\|v(\varrho)\|^2)+16c_1^2\|\kappa\|^2_{L^\infty(\mathbb{R},\ell^2)}\int_{\tau}^{t}(\sup_{\tau \leq \varrho \leq s}\|\Phi(\varrho)\|^2)ds+4c_1(T-\tau)\|\chi\|^2_{L^\infty(\mathbb{R},\ell^2)}.
	\end{aligned}
\end{equation}
Together with \eqref{eq:3.18}-\eqref{eq:3.23}, we obtain that for all $ \tau \leq t \leq T $,
\begin{equation*}
	\begin{aligned}
		\mathbb{E}(\sup_{\tau \leq \varrho \leq t}\|\Phi(\varrho)\|^2)\leq&2\mathbb{E}\|\Phi_{\tau}\|^2\\&+(\lambda+\delta^2-\alpha \delta)^{-1}\{\int_{\tau}^{T}\mathbb{E}\|g(s)\|^2+[(T-\tau)[3\|\gamma\|^2_{L^\infty([\tau,T],\ell^2)}+2(1+2c_1^2)\|\chi\|^2_{L^\infty(\mathbb{R},\ell^2)}]]\}\\&+2[1+2\nu+3\|\gamma\|^2_{L^\infty([\tau,T],\ell^2)}+8(1+2c_1^2)\|\kappa\|^2_{L^\infty(\mathbb{R},\ell^2)}]\int_{\tau}^{t}(\sup_{\tau \leq \varrho \leq s}\|\Phi(\varrho)\|^2)ds\\:=&\triangle_1+\triangle_2\int_{\tau}^{t}(\sup_{\tau \leq \varrho \leq s}\|\Phi(\varrho)\|^2)ds,
	\end{aligned}
\end{equation*}
obviously, $\triangle_1$ and $\triangle_2$ are positive constants independent of t.\\
Applying the Gronwall inequality, we have that we obtain that for all $ \tau \leq t \leq T $,
\begin{equation}\label{eq:3.24}
	\begin{aligned}
		\mathbb{E}\|\Phi(t)\|^2)\leq\triangle_1e^{2\triangle_2(T-\tau)}<\infty.
	\end{aligned}
\end{equation}
Hence, $\mathcal{L}_{\Phi_{(t)}}\in\mathcal{P}^2(\ell^2\times\ell^2)$ for all $\tau\leq t\leq T$. This completes the proof.	
\end{proof}
\textbf{Remark 3.1.} Moreover, by analogous arguments as Step 1 in Prop.3.1, if $ \Phi_{\tau} \in L^4(\Omega, \ell^2\times\ell^2) $ is an $ \mathcal{F}_r $-measurable random variable and conditions (A1)-(A3) hold, then system \eqref{eq:2.16} admits a strong solution $ \Phi \in L^4(\Omega, C([\tau, T], \ell^2\times\ell^2)) $ for any $ T > \tau$.\par
Furthermore, we can use Prop.3.2 in \cite{32} to obtain the existence and uniqueness of weak solutions for system \eqref{eq:2.16}.

\section{Uniform Estimates}	
\indent
\par
In this section, we derive the $\mathcal{D}$-uniform moment estimates of solutions of \eqref{eq:2.16}, which play a pivotal role in substantiating the existence and convergence of pullback measure attractors.\par 
We denote by $\Phi(t,\tau-t,\Phi_{\tau-t})=\binom{u(t,\tau-t,\Phi_{\tau-t})}{v(t,\tau-t,\Phi_{\tau-t})}$ the solution of \eqref{eq:2.16} with initial condition $\Phi_{\tau-t}=\binom{u_{\tau-t}}{v_{\tau-t}}\in L^2_{\mathcal{F}_{\tau-t}}(\Omega,\ell^2\times\ell^2)$. Firstly, we discuss uniform estimates of solutions of the solutions in $L^2(\Omega,\mathbb{P})$.\par
\begin{lem}
\label{lem:4.1}
Suppose (A1)-(A3), \eqref{eq:2.21}--\eqref{eq:2.23} and \eqref{eq:2.25} hold. Then for every $\tau\in\mathbb{R}$ and $D_1=\{D_1(t):t\in\mathbb{R}\}\in\mathcal{D}_0$, there exists $T:=T(\tau,D_1)>0$, such that for all $t\geq T,\ \Phi_{\tau-t}=\binom{u_{\tau-t}}{v_{\tau-t}}\in L^2_{\mathcal{F}_{\tau-t}}(\Omega,\ell^2\times\ell^2)$ with $\mathcal{L}_{\Phi_{\tau-t}}\in D_1(\tau-t)$, the solution $\Phi=\binom{u}{v}$ of system \eqref{eq:2.16} satisfies
\begin{equation}
\begin{aligned}
	\label{eq:4.1}
	\mathbb{E}\left(\|u(\tau,\tau-t,u_{\tau-t})\|^2+\|v(\tau,\tau-t,v_{\tau-t})\|^2\right)\leq L_1+L_1\int_{-\infty}^{\tau}e^{\beta(s-\tau)}\|g(s)\|^2ds,
\end{aligned}
\end{equation}
where $L_1>0$ is a constant depending on $\beta,\ \alpha,\ \delta,\ \|\gamma\|_{L^\infty([\tau,T],\ell^2)}$ and $\|\chi\|_{L^\infty(\mathbb{R},\ell^2)}$, but independent of $\tau$ and $D_1$.
\end{lem}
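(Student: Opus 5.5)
We will run a weighted energy estimate on the first-order system \eqref{eq:2.16}. The functional is
\[
E(\Phi)=(\lambda+\delta^2-\alpha\delta)\|u\|^2+\|v\|^2+\nu\|Bu\|^2 .
\]
It is chosen so that the cross terms coming from the $u$- and $v$-equations cancel. Its It\^o differential gives $-2(\lambda+\delta^2-\alpha\delta)(u,v)$ from the $v$-equation, which cancels against $+2(\lambda+\delta^2-\alpha\delta)(u,v)$ from $d\|u\|^2$. Likewise, since $(Au,v)=(Bu,Bv)$ by \eqref{eq:2.9} and $d\|Bu\|^2=2(Bu,Bv-\delta Bu)\,dt$, the term $-2\nu(Au,v)$ cancels. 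What remains is
\[
dE=\big[-2\delta(\lambda+\delta^2-\alpha\delta)\|u\|^2-2\delta\nu\|Bu\|^2-2(\alpha-\delta)\|v\|^2+2(f+g,v)+\|\tilde\sigma\|_{L_2}^2\big]dt+2(v,\tilde\sigma\,dW).
\]
By \eqref{eq:2.10} and \eqref{eq:2.21}, $E$ is equivalent to $\|u\|^2+\|v\|^2$, with constants depending only on $\lambda,\alpha,\delta,\nu$.

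Next we apply It\^o's formula to $e^{\beta s}E(\Phi(s))$ on $[\tau-t,\tau]$ and take expectations. The stochastic integral is a true martingale, because $\Phi\in L^2(\Omega,C([\tau-t,\tau],\ell^2\times\ell^2))$ by Proposition \ref{pro:3.1} and $\tilde\sigma$ has linear growth \eqref{eq:2.14}, so its expectation vanishes. The remaining terms are handled as follows.
\begin{itemize}
\item Forcing: $2(g,v)\le\frac{\alpha-\delta}{2}\|v\|^2+\frac{2}{\alpha-\delta}\|g\|^2$.
\item Drift: $2(f,v)\le\frac{\alpha-\delta}{2}\|v\|^2+\frac{2}{\alpha-\delta}\|f\|^2$. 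By \eqref{eq:2.13} and \eqref{eq:2.8}, $\mathbb{E}\|f\|^2\le3\|\gamma\|^2(1+2\mathbb{E}\|u\|^2)$.
\item Diffusion: by \eqref{eq:2.14} and \eqref{eq:2.8}, $\mathbb{E}\|\tilde\sigma\|^2_{L_2}\le8\|\kappa\|^2\mathbb{E}\|u\|^2+2\|\chi\|^2$.
\end{itemize}
Here the McKean--Vlasov dependence is harmless, since $\rho(\mathcal{L}_u,\hat\delta_0)^2\le\mathbb{E}\|u\|^2$. The total coefficient multiplying $\mathbb{E}\|u\|^2$ is $-2\delta(\lambda+\delta^2-\alpha\delta)+\frac{12}{\alpha-\delta}\|\gamma\|^2+8\|\kappa\|^2$. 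By \eqref{eq:2.23} this is at most $-\delta(\lambda+\delta^2-\alpha\delta)$, i.e. $-\delta$ times the $u$-part of $E$. The $v$-coefficient is at most $-(\alpha-\delta)\|v\|^2$, and the $\|Bu\|^2$ term carries $-2\delta\nu$. With $\beta=\min\{\delta,\alpha-\delta\}$ from \eqref{eq:2.22}, this yields
\[
\frac{d}{ds}\mathbb{E}\big(e^{\beta s}E(\Phi(s))\big)\le e^{\beta s}\Big(C_1+\tfrac{2}{\alpha-\delta}\|g(s)\|^2\Big),
\qquad
C_1=\tfrac{6}{\alpha-\delta}\|\gamma\|^2+2\|\chi\|^2 .
\]

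Integrating from $\tau-t$ to $\tau$ and multiplying by $e^{-\beta\tau}$ gives
\[
\mathbb{E}E(\Phi(\tau))\le e^{-\beta t}\,\mathbb{E}E(\Phi_{\tau-t})+\frac{C_1}{\beta}+\frac{2}{\alpha-\delta}\int_{-\infty}^{\tau}e^{\beta(s-\tau)}\|g(s)\|^2ds .
\]
The last integral is finite by \eqref{eq:2.25}. For the first term,
\[
e^{-\beta t}\,\mathbb{E}E(\Phi_{\tau-t})\le Ce^{-\beta\tau}\,e^{\beta(\tau-t)}\|D_1(\tau-t)\|^2_{\mathcal{P}_2},
\]
which tends to $0$ as $t\to\infty$ because $D_1\in\mathcal{D}_0$. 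Hence it is at most $1$ for $t\ge T(\tau,D_1)$. Dividing by the lower equivalence constant of $E$ then gives \eqref{eq:4.1} with $L_1$ independent of $\tau$ and $D_1$.

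The main obstacle is the bookkeeping of the absorption step. One must arrange the Young-inequality weights so that, after the $\gamma$- and $\kappa$-terms are absorbed, the $u$-, $v$- and $Bu$-dissipation rates all remain at least $\beta$; this is exactly where \eqref{eq:2.23} is used. One point needs care. \eqref{eq:2.23} is stated with $\|\gamma\|_{L^\infty([\tau,T],\ell^2)}$, but the integration runs over $[\tau-t,\tau]$ with $t\to\infty$. So I would interpret $\gamma$ as bounded on $(-\infty,\tau]$, uniformly enough for \eqref{eq:2.23} to hold there, which is consistent with $L_1$ depending on that norm. If the $\nu\|Bu\|^2$ term turned out to be awkward, the alternative is to drop it from $E$ and bound $2\nu|(Bu,Bv)|$ directly. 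That route would require absorbing a $4\nu$ term, so I prefer the cancellation approach above.
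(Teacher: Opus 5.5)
Your proposal is correct and is essentially the paper's proof: the same functional $(\lambda+\delta^2-\alpha\delta)\|u\|^2+\|v\|^2+\nu\|Bu\|^2$, the same Young splits with weight $\frac{\alpha-\delta}{2}$, the same absorption of the $\gamma$- and $\kappa$-terms via \eqref{eq:2.23} to reach a differential inequality with rate $\beta$, and the same integration over $(\tau-t,\tau)$, with the $\mathcal{D}_0$ condition removing the initial term. Your $g$-estimate, with $\frac{\alpha-\delta}{2}\|v\|^2$, is the correct form of the paper's \eqref{eq:4.4}, whose $\frac{2}{\alpha-\delta}\|v\|^2$ is a typo, and your reading of $\|\gamma\|_{L^\infty([\tau,T],\ell^2)}$ as a uniform bound on $(-\infty,\tau]$ is the assumption the paper's argument implicitly needs as well.
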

\begin{proof}
By \eqref{eq:2.16} and $It\hat{o}$ formula, we have for all $t\geq\tau$,
\begin{align*}
	d\|u(t)\|^2=2(u(t),v(t))dt-2\delta\|u(t)\|^2dt
\end{align*}
and
\begin{align*}
	d\|v(t)\|^2=&-2(\lambda+\delta^2-\alpha\delta)(u(t),v(t))dt-2(\alpha-\delta)\|v(t)\|^2dt-2\nu(Au(t),v(t))dt\\&+2(f(t,u(t),\mathcal{L}_{u(t)}),v(t))dt+2(g(t),v(t))dt\\&+\|\tilde{\sigma}(t, u(t),\mathcal{L}_{u(t)})\|_{L_2(\ell^2,\ell^2)}^2dt+2(\tilde{\sigma}(t, u(t),\mathcal{L}_{u(t)}),v(t)dW(t)).
\end{align*}
Therefore, we have
\begin{equation}
	\begin{aligned}
	\label{eq:4.2}
	&\dfrac{d}{dt}\mathbb{E}\left((\lambda+\delta^2-\alpha\delta)\|u(t)\|^2+\|v(t)\|^2+\nu\|Bu(t)\|^2\right)\\&+2\delta(\lambda+\delta^2-\alpha\delta)\mathbb{E}(\|u(t)\|^2)+2(\alpha-\delta)\mathbb{E}(\|v(t)\|^2)+2\nu\delta\mathbb{E}(\|Bu(t)\|^2)\\=&2\mathbb{E}(f(t,u(t),\mathcal{L}_{u(t)}),v(t))+2\mathbb{E}(g(t),v(t))+\mathbb{E}\left(\|\tilde{\sigma}(t, u(t),\mathcal{L}_{u(t)})\|_{L_2(\ell^2,\ell^2)}^2\right).
\end{aligned}
\end{equation}
By  \eqref{eq:2.13}, \eqref{eq:2.8} and Young's inequality, we obtain
\begin{equation}
	\begin{aligned}
		\label{eq:4.3}
	2\mathbb{E}(f(t,u(t),\mathcal{L}_{u(t)}),v(t))&\leq \dfrac{2}{(\alpha-\delta)}\mathbb{E}\|f(t,u(t),\mathcal{L}_{u(t)}\|^2+\dfrac{(\alpha-\delta)}{2}\mathbb{E}\|v(t)\|^2
	 \\&\leq\dfrac{6}{(\alpha-\delta)}\|\gamma\|^2_{L^\infty([\tau,T],\ell^2)}+\dfrac{12}{(\alpha-\delta)}\|\gamma\|^2_{L^\infty([\tau,T],\ell^2)}\mathbb{E}\|u(t)\|^2+\dfrac{(\alpha-\delta)}{2}\mathbb{E}\|v(t)\|^2.
	\end{aligned}
\end{equation}
and
\begin{align}
	\label{eq:4.4}
2\mathbb{E}(g(t),v(t))\leq\dfrac{2}{(\alpha-\delta)}\mathbb{E}\|g(t)\|^2+\dfrac{2}{(\alpha-\delta)}\mathbb{E}\|v(t)\|^2.
\end{align}
For the last term on the right-hand side of \eqref{eq:4.2}, we infer from \eqref{eq:2.14} and \eqref{eq:2.8} that
\begin{equation}
	\begin{aligned}
		\label{eq:4.5}
		\mathbb{E}\left(\|\tilde{\sigma}(t, u(t),\mathcal{L}_{u(t)})\|_{L_2(\ell^2,\ell^2)}^2\right)\leq 8\|\kappa\|^2_{L^\infty(\mathbb{R},\ell^2)}\mathbb{E}\|u(t)\|^2+2\|\chi\|^2_{L^\infty(\mathbb{R},\ell^2)}.
	\end{aligned}
\end{equation}
Substituting \eqref{eq:4.3}--\eqref{eq:4.5} into \eqref{eq:4.2}, by \eqref{eq:2.21}--\eqref{eq:2.23}, we have
\begin{equation}
	\begin{aligned}
		\label{eq:4.6}
		&\dfrac{d}{dt}\mathbb{E}\left[(\lambda+\delta^2-\alpha\delta)\|u(t)\|^2+\|v(t)\|^2+\nu\|Bu(t)\|^2\right]\\&+\beta\mathbb{E}\left[(\lambda+\delta^2-\alpha\delta)\|u(t)\|^2+\|v(t)\|^2+\nu\|Bu(t)\|^2\right]\\\leq&\dfrac{2}{(\alpha-\delta)}(3\|\gamma\|^2_{L^\infty([\tau,T],\ell^2)}+\|g(t)\|^2)+2\|\chi\|^2_{L^\infty(\mathbb{R},\ell^2)}.
	\end{aligned}
\end{equation}
Multiplying \eqref{eq:4.6} by $e^{\beta t}$ and integrating over $(\tau-t,\tau)$ with $t\in\mathbb{R}^+$, we see 
\begin{equation}
	\begin{aligned}
		\label{eq:4.7}
	\mathbb{E}&\left[(\lambda+\delta^2-\alpha\delta)\|u(\tau,\tau-t,u_{\tau-t})\|^2+\|v(\tau,\tau-t,v_{\tau-t})\|^2+\nu\|Bu(\tau,\tau-t,u_{\tau-t})\|^2\right]
	\\&\leq e^{-\beta t}\mathbb{E}\left[(\lambda+\delta^2-\alpha\delta)\|u_{\tau-t}\|^2+\|v_{\tau-t}\|^2+\nu\|Bu_{\tau-t}\|^2\right]
	\\&+\dfrac{2}{(\alpha-\delta)}\int_{\tau-t}^{\tau}e^{\beta(s-\tau)}\|g(s)\|^2ds+\frac{1}{\beta}(\dfrac{6}{(\alpha-\delta)}\|\gamma\|^2_{L^\infty([\tau,T],\ell^2)}+2\|\chi\|^2_{L^\infty(\mathbb{R},\ell^2)}).
	\end{aligned}
\end{equation}
Since $\mathcal{L}_{\Phi_{\tau-t}}\in D_1(\tau-t)$ with $D_1\in\mathcal{D}_0$, we get
\begin{equation*}
	\begin{aligned}
		&\lim_{t \to\infty}e^{-\beta t}\mathbb{E}\left((\lambda+\delta^2-\alpha\delta)\|u_{\tau-t}\|^2+\|v_{\tau-t}\|^2+\nu\|Bu_{\tau-t}\|^2\right)\\ &\leq(\lambda+\delta^2-\alpha\delta+4\nu+1)e^{-\beta\tau}\lim_{t \to\infty}e^{\beta(\tau-t)}\|D_1(\tau-t)\|_{\mathcal{P}_2(\ell^2\times\ell^2)}^2=0.
	\end{aligned}
\end{equation*}
Hence, there exists $T=T(\tau,D_1)$ such that for all $t\geq T$,
\begin{equation*}
	\begin{aligned}
		e^{-\beta t}\mathbb{E}\left((\lambda+\delta^2-\alpha\delta)\|u_{\tau-t}\|^2+\|v_{\tau-t}\|^2+\nu\|Bu_{\tau-t}\|^2\right)\leq\frac{1}{\beta},
	\end{aligned}
\end{equation*}
This along with \eqref{eq:4.7} and \eqref{eq:2.25} concludes the proof.
\end{proof}
\par
\begin{lem}
	\label{lem:4.2}
	Suppose (A1)-(A3), \eqref{eq:2.21}--\eqref{eq:2.23} and \eqref{eq:2.25} hold. Then the solution $\Phi(\tau,\tau-t,\Phi_{\tau-t})=\binom{u(\tau,\tau-t,u_{\tau-t})}{v(\tau,\tau-t,v_{\tau-t})}$ of system \eqref{eq:2.16} with initial time $\tau-t$ satisfies
	\begin{equation}
		\begin{aligned}
			\label{eq:4.8}
			\sup_{t\geq0}\mathbb{E}\left(\|u(\tau,\tau-t,u_{\tau-t})\|^2+\|v(\tau,\tau-t,v_{\tau-t})\|^2\right)\leq L_2\mathbb{E}(\|u_{\tau-t}\|^2+\|v_{\tau-t}\|^2)+L_2(1+\|g\|^2),
		\end{aligned}
	\end{equation}
	where $\Phi_{\tau-t}=\binom{u_{\tau-t}}{v_{\tau-t}}\in L^2(\Omega,\mathcal{F}_{\tau-t},\ell^2\times\ell^2)$ and $L_2>0$ is a constant depending on $\lambda,\ \beta,\ \alpha,\ \|\gamma\|_{L^\infty([\tau,T],\ell^2)}$ and $\|\chi\|_{L^\infty(\mathbb{R},\ell^2)}$, but independent of $t$ and $\Phi_{\tau-t}$.
\end{lem}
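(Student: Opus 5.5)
The plan is to reuse the differential energy inequality \eqref{eq:4.6} from the proof of Lemma \ref{lem:4.1}. That inequality holds for every solution of \eqref{eq:2.16} on any interval, and it does not rely on the pullback family $\mathcal{D}_0$. Introduce the weighted energy
\begin{align*}
Y(s):=\mathbb{E}\left[(\lambda+\delta^2-\alpha\delta)\|u(s)\|^2+\|v(s)\|^2+\nu\|Bu(s)\|^2\right],\qquad s\ge \tau-t .
\end{align*}
Then \eqref{eq:4.6} reads $Y'(s)+\beta Y(s)\le \frac{2}{\alpha-\delta}\big(3\|\gamma\|^2_{L^\infty}+\|g(s)\|^2\big)+2\|\chi\|^2_{L^\infty(\mathbb{R},\ell^2)}$. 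To make this rigorous one takes expectations in the It\^o formula for $\|u\|^2$ and $\|v\|^2$. The stochastic integral is a true martingale, and hence has zero mean, because $\Phi\in L^2(\Omega,C([\tau-t,T],\ell^2\times\ell^2))$ by Proposition \ref{pro:3.1} and \eqref{eq:2.14} bounds $\tilde\sigma$ in $L_2(\ell^2,\ell^2)$.

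Multiplying by $e^{\beta s}$ and integrating over $(\tau-t,\tau)$ gives \eqref{eq:4.7} with no restriction on $t\ge0$. We then bound each term separately.

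\emph{Initial-data term.} Using $e^{-\beta t}\le 1$ together with $\|Bu\|^2\le 4\|u\|^2$ from \eqref{eq:2.10}, this term is at most $(\lambda+\delta^2-\alpha\delta+4\nu+1)\,\mathbb{E}(\|u_{\tau-t}\|^2+\|v_{\tau-t}\|^2)$.

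\emph{Forcing term.} We have $\int_{\tau-t}^{\tau}e^{\beta(s-\tau)}\|g(s)\|^2ds\le \|g\|^2_{L^\infty}\int_{-\infty}^{\tau}e^{\beta(s-\tau)}ds=\beta^{-1}\|g\|^2$. Here $\|g\|$ is read as the $L^\infty$ norm appearing in the statement; alternatively it can be bounded by the finite integral in \eqref{eq:2.25}, which is independent of $t$.

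\emph{Constant term.} This is bounded by $\beta^{-1}\big(\frac{6}{\alpha-\delta}\|\gamma\|^2+2\|\chi\|^2\big)$.

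To return from $Y(\tau)$ to $\mathbb{E}(\|u(\tau)\|^2+\|v(\tau)\|^2)$, divide by $\min\{\lambda+\delta^2-\alpha\delta,1\}$, which is positive by \eqref{eq:2.21}. Collecting constants defines $L_2$. It depends only on $\lambda,\alpha,\delta,\nu,\beta,\|\gamma\|,\|\chi\|$ and not on $t$ or $\Phi_{\tau-t}$. Taking the supremum over $t\ge0$ then yields \eqref{eq:4.8}.

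The main obstacle is justifying \eqref{eq:4.6} uniformly, namely that the coefficient absorption works for all $t\ge0$ and not only for large $t$. The linear term $\frac{12}{\alpha-\delta}\|\gamma\|^2\mathbb{E}\|u\|^2+8\|\kappa\|^2\mathbb{E}\|u\|^2$ must be dominated by $2\delta(\lambda+\delta^2-\alpha\delta)\mathbb{E}\|u\|^2$ with enough margin left to produce the rate $\beta$. This is exactly what \eqref{eq:2.23} provides. Similarly, the $v$-terms $\frac{\alpha-\delta}{2}+\frac{2}{\alpha-\delta}$ must be absorbed by $2(\alpha-\delta)$, for which we adjust the Young constant in \eqref{eq:4.4} to $\frac{\alpha-\delta}{2}$ if needed. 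Finally, $\nu(Au,v)$ combines with $d\|Bu\|^2$ to give $\nu\frac{d}{dt}\|Bu\|^2+2\nu\delta\|Bu\|^2$. Once these checks hold, the rest is a direct Gronwall-type integration.
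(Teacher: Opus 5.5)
Your proposal is correct and follows the paper's route. The paper's proof is only the one-line remark that \eqref{eq:4.8} follows from \eqref{eq:4.6} by Gronwall's lemma, and you carry out exactly that integration of $e^{\beta s}Y(s)$ over $(\tau-t,\tau)$, rightly noting that the Young constant in \eqref{eq:4.4} must be $\frac{\alpha-\delta}{2}$ for the absorption to work. One caveat, which the paper shares: uniformity in $t\ge 0$ implicitly requires \eqref{eq:2.23} and the bounds on $\gamma$ and $g$ to hold as sup norms over $(-\infty,\tau]$, not over a finite interval $[\tau,T]$.
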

\begin{proof}
	Since $g$ is a time-independent deterministic function, the desired result yields from \eqref{eq:4.6} and the Gronwall Lemma immediately.
\end{proof}\par

The next lemma entails acquiring uniform estimates on the tails of solutions to equation \eqref{eq:2.16}, which is essential in the proof of asymptotic compactness of the probability distribution of solutions. For this purpose, we define a smooth function $\rho:\mathbb{R}\rightarrow [0,1]$ such that
\begin{equation*} \rho (s)=
	\begin{cases} 0, & if \ \ \vert s\vert \leq1, \\1, & if \ \ \vert s\vert \geq2.
	\end{cases}
\end{equation*}

\begin{lem}
	\label{lem:4.3}
	Suppose (A1)-(A3), \eqref{eq:2.21}--\eqref{eq:2.23} and \eqref{eq:2.25} hold. Then for every $\varepsilon>0,\ \tau\in\mathbb{R}$ and $D_1=\{D_1(t):t\in\mathbb{R}\}\in\mathcal{D}_0$, there exists $T=T(\varepsilon,\tau,D_1)$ and $N=N(\varepsilon,\tau)\in\mathbb{N}^+$, such that for all $ t\geq \tau$, $n\geq\mathbb{N}$ and $\Phi_{\tau-t}=\binom{u_{\tau-t}}{v_{\tau-t}}\in L^2_{\mathcal{F}_{\tau-t}}(\Omega,\ell^2\times\ell^2)$ with $\mathcal{L}_{\Phi_{\tau-t}}\in D_1(\tau-t)$,  the solution $\Phi=\binom{u}{v}$ of system \eqref{eq:2.16} satisfies
\begin{align*}
	\mathbb{E}\left(\sum_{\left|i\right|\geq n}(\left|u_i(\tau,\tau-t,u_{\tau-t})\right|^2+\left|v_i(\tau,\tau-t,v_{\tau-t})\right|^2)\right)\leq\varepsilon.
\end{align*}
\end{lem}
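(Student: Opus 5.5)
The plan is to rerun the energy argument of Lemma \ref{lem:4.1} with a spatial cut-off. Fix $n\in\mathbb{N}^+$ and write $\rho_n:=(\rho(i/n))_{i\in\mathbb{Z}}$. For $w\in\ell^2$ let $\rho_n w:=(\rho(i/n)w_i)_{i\in\mathbb{Z}}$. Apply It\^o's formula to
\begin{align*}
\sum_{i}\rho(i/n)|u_i|^2\quad\text{and}\quad\sum_i\rho(i/n)|v_i|^2,
\end{align*}
exactly as in \eqref{eq:4.2}, and take expectations so that the martingale term disappears. The weighted functional to control is
\begin{align*}
Y_n(t)=\mathbb{E}\Big[(\lambda+\delta^2-\alpha\delta)\sum_i\rho(i/n)|u_i|^2+\sum_i\rho(i/n)|v_i|^2+\nu\sum_i\rho(i/n)|(Bu)_i|^2\Big].
\end{align*}

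\textbf{Step 1: the lattice operator.} The coupling term $-2\nu\,\mathbb{E}(Au,\rho_n v)$ is the only term that does not localize. Using $A=B^*B$ from \eqref{eq:2.9}, write
\begin{align*}
(Bu,B(\rho_n v))=(Bu,\rho_n Bv)+\big(Bu,(\rho(\tfrac{i+1}{n})-\rho(\tfrac{i}{n}))v_{i+1}\big).
\end{align*}
Since $v=\dot u+\delta u$, the first term equals $\frac{d}{dt}\frac12\sum\rho|Bu|^2+\delta\sum\rho|Bu|^2$ up to a similar commutator. Each commutator is bounded by $\frac{c\|\rho'\|_\infty}{n}\big(\|u\|^2+\|v\|^2\big)$ because $|\rho(\frac{i+1}{n})-\rho(\frac in)|\le\|\rho'\|_\infty/n$. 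After taking expectations, Lemma \ref{lem:4.2} (together with Lemma \ref{lem:4.1} for large times) bounds these terms by $\frac{C}{n}$ uniformly in time.

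\textbf{Step 2: the remaining terms.} The other terms are treated componentwise, exactly as in \eqref{eq:4.3}--\eqref{eq:4.5}.
\begin{itemize}
\item From \eqref{eq:2.4} and \eqref{eq:2.7}, each $|f_i|^2$ and $|\sigma_i|^2$ is bounded by a constant times $(\gamma_i^2+\kappa_i^2)\big(1+|u_i|^2+\mathbb{E}|u_i|^2\big)+\chi_i^2$.
\item Since the coefficient sequences belong to $\ell^\infty$, the part involving $u_i$ is bounded by $C(\|\gamma\|^2+\|\kappa\|^2)\,\mathbb{E}\sum\rho|u_i|^2$. Condition \eqref{eq:2.23} lets this be absorbed into the dissipation, as before.
\item The constant parts $\sum_{|i|\ge n}(\gamma_i^2+\chi_i^2)$ and $\sum_{|i|\ge n}|g_i|^2$ are tails of $\ell^2$ sequences.
\end{itemize}
This leads to the differential inequality
\begin{align*}
\frac{d}{dt}Y_n+\beta Y_n\le \frac{C}{n}\big(1+\mathbb{E}\|\Phi(t)\|^2\big)+C\sum_{|i|\ge n}\big(\gamma_i^2+\chi_i^2+\kappa_i^2\big)+C\sum_{|i|\ge n}|g_i(t)|^2 .
\end{align*}

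\textbf{Step 3: Gronwall on the pullback interval.} Multiply by $e^{\beta s}$ and integrate over $(\tau-t,\tau)$.
\begin{itemize}
\item The initial term is $e^{-\beta t}Y_n(\tau-t)\le Ce^{-\beta t}\|D_1(\tau-t)\|^2$, which is small for $t\ge T$ by the definition of $\mathcal{D}_0$.
\item The $\frac Cn$ term is controlled using the uniform bound from Lemma \ref{lem:4.1}, valid after a finite absorbing time.
\item The term $\int_{-\infty}^\tau e^{\beta(s-\tau)}\sum_{|i|\ge n}|g_i(s)|^2ds$ tends to $0$ as $n\to\infty$ by dominated convergence and \eqref{eq:2.25}.
\item The coefficient tails vanish because the coefficients are $\ell^2$-valued and bounded in time. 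To get uniformity in time here, use $L^\infty(\mathbb{R},\ell^2)$ for $\kappa,\chi$; for $\gamma$ use the local $L^\infty$ norm on the relevant interval, as the paper does.
\end{itemize}
Since $\rho=1$ for $|s|\ge 2$, taking $n$ large gives the bound $\varepsilon$ for $\sum_{|i|\ge 2n}$, which is the claim after relabeling $n$.

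\textbf{Main obstacle.} I expect the main difficulty in Step 2 to be the distribution-dependent terms. The law term $\mathbb{W}_2(\mathcal{L}_{u_i},\delta_0)^2=\mathbb{E}|u_i|^2$ must be localized at the same index $i$ so that it merges into $\mathbb{E}\sum\rho|u_i|^2$ rather than into the full $\ell^2$ norm. This works because the measure dependence is diagonal, i.e. $\mu_i$ enters only the $i$-th component. I also need the weights in \eqref{eq:2.4} and \eqref{eq:2.7} to be uniform enough that the coefficient $\frac{12}{\alpha-\delta}\|\gamma\|^2+8\|\kappa\|^2$ from \eqref{eq:2.23} still dominates after localization. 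To achieve this I would bound $\sup_i\gamma_i^2\le\|\gamma\|^2$, so that the same constant as in \eqref{eq:2.23} appears.
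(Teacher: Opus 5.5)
Your outline follows the paper's own proof: the cut-off $\rho(i/n)$, It\^o's formula for the weighted norms, the summation-by-parts commutator for $(Au,\rho_n v)$ producing $O(1/n)$ errors, diagonal localization of $\mathbb{W}_2(\mathcal{L}_{u_i},\delta_0)^2=\mathbb{E}|u_i|^2$, absorption via \eqref{eq:2.23}, and the pullback limit using $\mathcal{D}_0$ and the tails of $g,\gamma,\chi$. The step that fails as written is the control of the commutator error.

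After you multiply by $e^{\beta s}$ and integrate, the quantity that must be bounded uniformly in $t\ge T$ and in the initial data is
\begin{equation*}
\frac{C}{n}\int_{\tau-t}^{\tau}e^{\beta(s-\tau)}\,\mathbb{E}\|\Phi(s,\tau-t,\Phi_{\tau-t})\|^2\,ds .
\end{equation*}
Neither Lemma \ref{lem:4.1} nor Lemma \ref{lem:4.2} gives this bound.
\begin{itemize}
\item Lemma \ref{lem:4.1} bounds the second moment only at the terminal time, and only after a $D_1$-dependent absorbing time. At an intermediate time $s$ it says nothing on the initial layer.
\item Where Lemma \ref{lem:4.1} does apply, its bound $L_1+L_1\int_{-\infty}^{s}e^{\beta(r-s)}\|g(r)\|^2dr$, integrated against $e^{\beta(s-\tau)}$, gives $\int_{-\infty}^{\tau}(\tau-r)e^{\beta(r-\tau)}\|g(r)\|^2dr$. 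Condition \eqref{eq:2.25} does not make this finite.
\item Using \eqref{eq:4.7} pointwise in $s$ instead gives a contribution $t\,e^{-\beta t}\|D_1(\tau-t)\|^2_{\mathcal{P}_2(\ell^2\times\ell^2)}$. This need not stay bounded for $D_1\in\mathcal{D}_0$, e.g.\ $\|D_1(s)\|^2_{\mathcal{P}_2(\ell^2\times\ell^2)}=e^{-\beta s}(1+|s|)^{-1/2}$.
\end{itemize}
The paper's appeal to Lemma \ref{lem:4.2} has the same defect, because the right-hand side of \eqref{eq:4.8} contains $\mathbb{E}\|\Phi_{\tau-t}\|^2$.

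The missing idea is to keep a strict surplus of dissipation over $\beta$ in the $\ell^2$ energy inequality, rather than reducing everything to rate $\beta$. Set $Y=\mathbb{E}[(\lambda+\delta^2-\alpha\delta)\|u\|^2+\|v\|^2+\nu\|Bu\|^2]$.
\begin{itemize}
\item The $u$-coefficient after absorption is $2\delta(\lambda+\delta^2-\alpha\delta)-\frac{12}{\alpha-\delta}\|\gamma\|^2-8\|\kappa\|^2$, which is strictly larger than $\delta(\lambda+\delta^2-\alpha\delta)$ by \eqref{eq:2.23}.
\item The $Bu$-coefficient is $2\nu\delta$.
\item The $v$-coefficient becomes $\frac54(\alpha-\delta)$ if you use weight $\frac{\alpha-\delta}{4}$ in Young's inequality for the $g$-term.
\end{itemize}
Hence $\frac{d}{ds}Y+(\beta+c)Y\le C(1+\|g(s)\|^2)$ for some $c>0$. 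Multiplying by $e^{\beta s}$ and integrating over $[\tau-t,\tau]$ gives
\begin{equation*}
c\int_{\tau-t}^{\tau}e^{\beta(s-\tau)}Y(s)\,ds\le e^{-\beta t}Y(\tau-t)+C\int_{-\infty}^{\tau}e^{\beta(s-\tau)}\big(1+\|g(s)\|^2\big)ds .
\end{equation*}
The right-hand side is bounded for $t\ge T$ by the definition of $\mathcal{D}_0$ and \eqref{eq:2.25}. This is exactly what turns your $\frac{C}{n}$ term into $\frac{C'}{n}$ uniformly in $t$ and in the initial data.

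A smaller point concerns $\gamma$. Your phrase \emph{the local $L^\infty$ norm on the relevant interval} has to mean a bound on $(-\infty,\tau]$. Both the absorption via \eqref{eq:2.23} and the dominated-convergence step for $\int_{-\infty}^{\tau}e^{\beta(s-\tau)}\sum_{|i|\ge n}\gamma_i(s)^2ds$ need $\sup_{s\le\tau}\|\gamma(s)\|<\infty$. The paper uses this implicitly as well.
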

\begin{proof}
	For each $n\in\mathbb{N}$, we denote $\rho_n=\left(\rho\left(\frac{\left|i\right|}{n}\right)\right)_{i\in\mathbb{Z}}$, $\rho_nu=\left(\rho\left(\frac{\left|i\right|}{n}\right)u_i\right)_{i\in\mathbb{Z}}$, and $\rho_nv=\left(\rho\left(\frac{\left|i\right|}{n}\right)v_i\right)_{i\in\mathbb{Z}}$.By \eqref{eq:2.16}, we get
\begin{align}
	\label{eq:4.9}
	d\rho_n\Phi(t)=\rho_n F(\Phi(t))dt+\rho_nG(\Phi(t))dW(t),
\end{align}	
	which along with $It\hat{o}$ formula implies that
\begin{align*}
	e^{\beta t}\|\rho_nu(t)\|^2=e^{\beta \tau}\|\rho_nu_{\tau}\|^2+(\beta-2\delta)\int_{\tau}^{t}e^{\beta s}\|\rho_nu(s)\|^2ds+2\int_{\tau}^{t}e^{\beta s}(\rho_n^2u(s),v(s))ds
\end{align*}
and
\begin{align*}
	&e^{\beta t}\|\rho_nv(t)\|^2+2(\lambda+\delta^2-\alpha\delta)\int_{\tau}^{t}e^{\beta s}(u(s),\rho_n^2v(s))ds+2(\alpha-\delta)\int_{\tau}^{t}e^{\beta s}\|\rho_nv(s)\|^2ds\\&+2\nu\int_{\tau}^{t}e^{\beta s}(Au(s),\rho_n^2v(s))ds\\=&e^{\beta \tau}\|\rho_nv_{\tau}\|^2+2\int_{\tau}^{t}e^{\beta s}(f(s,u(s),\mathcal{L}_{u(s)}),\rho_n^2v(s))ds+2\int_{\tau}^{t}e^{\beta s}(g(s),\rho_n^2v(s))ds\\&+\int_{\tau}^{t}e^{\beta s}\|\rho_n\tilde{\sigma}(s, u(s),\mathcal{L}_{u(s)})\|_{L_2(\ell^2,\ell^2)}^2ds+2\int_{\tau}^{t}e^{\beta s}(\rho_n^2v(s),\tilde{\sigma}(s, u(s),\mathcal{L}_{u(s)})dW(s))
\end{align*}
Thus, we have
\begin{equation}
	\begin{aligned}
		\label{eq:4.10}
		&\mathbb{E}\left[e^{\beta t}((\lambda+\delta^2-\alpha\delta)\|\rho_nu(t)\|^2+\|\rho_nv(t)\|^2)\right]+(2\delta-\beta)\mathbb{E}(\int_{\tau}^{t}e^{\beta s}(\lambda+\delta^2-\alpha\delta)\|\rho_nu(s)\|^2ds)\\&+(2(\alpha-\delta)-\beta)\mathbb{E}(\int_{\tau}^{t}e^{\beta s}\|\rho_nv(s)\|^2ds)
		\\=&\mathbb{E}\left[e^{\beta \tau}((\lambda+\delta^2-\alpha\delta)\|\rho_nu_{\tau}\|^2+\|\rho_nv_{\tau}\|^2)\right]-2\nu\mathbb{E}(\int_{\tau}^{t}e^{\beta s}(Au(s),\rho_n^2\dot{u}(s))ds)\\&-2\nu\delta\mathbb{E}(\int_{\tau}^{t}e^{\beta s}(Au(s),\rho_n^2u(s))ds)+2\mathbb{E}(\int_{\tau}^{t}e^{\beta s}(f(s,u(s),\mathcal{L}_{u(s)}),\rho_n^2v(s))ds)\\&+2\mathbb{E}(\int_{\tau}^{t}e^{\beta s}(g(s),\rho_n^2v(s))ds)+\mathbb{E}(\int_{\tau}^{t}e^{\beta s}\|\rho_n\tilde{\sigma}(s, u(s),\mathcal{L}_{u(s)})\|_{L_2(\ell^2,\ell^2)}^2ds).
	\end{aligned}
\end{equation}	
In the view of the following fact that
\begin{equation}
	\begin{aligned}
		\label{eq:4.11}
	(Au(s),\rho_n^2\dot{u}(s))=&(Bu(s),B(\rho_n^2\dot{u}(s)))\\=&\sum_{i\in\mathbb{Z}}(u_{i+1}(s)-u_i(s))\left(\rho^2\left(\frac{i+1}{n}\right)\dot{u}_(i+1)(s)-\rho^2\left(\frac{i}{n}\right)\dot{u}_i(s)\right)\\=&\sum_{i\in\mathbb{Z}}\rho^2\left(\frac{i}{n}\right)(u_{i+1}(s)-u_i(s))(\dot{u}_{i+1}(s)-\dot{u}_i(s))\\&+\sum_{i\in\mathbb{Z}}(\rho^2\left(\frac{i+1}{n}\right)-\rho^2\left(\frac{i}{n}\right))(u_{i+1}(s)-u_i(s))\dot{u}_{i+1}(s),
	\end{aligned}
\end{equation}	
then, the last term on the right-hand side of \eqref{eq:4.11} satisfies
\begin{equation}
	\begin{aligned}
		\label{eq:4.12}
	&\sum_{i\in\mathbb{Z}}(\rho^2\left(\frac{i+1}{n}\right)-\rho^2\left(\frac{i}{n}\right))(u_{i+1}(s)-u_i(s))\dot{u}_{i+1}(s)\\\leq&2n^{-1}c_0\sum_{i\in\mathbb{Z}}\left|u_{i+1}(s)-u_i(s)\right|\left|\dot{u}_{i+1}(s)\right|\\\leq&2n^{-1}c_0\sum_{i\in\mathbb{Z}}(\left|u_i(s)\right|^2+\left|\dot{u}_{i+1}(s)\right|^2)\\\leq&2n^{-1}c_0(\|u(s)\|^2+\|v(s)-\delta u(s)\|^2)\\\leq&2n^{-1}c_0(2\delta^2+3)(\|u(s)\|^2+\|v(s)\|^2),
	\end{aligned}
\end{equation}
where $c_0$ is a constant such that $\left|\rho'(s)\right|\leq c_0$ for all $s\in\mathbb{R}$.		
Then, it follows from \eqref{eq:4.11} and \eqref{eq:4.12} that
\begin{equation}
	\begin{aligned}
		\label{eq:4.13}
		(Au(s),\rho_n^2\dot{u}(s))\geq\frac{1}{2}\dfrac{d}{ds}\|\rho_nBu(s)\|^2-2n^{-1}c_0(2\delta^2+3)(\|u(s)\|^2+\|v(s)\|^2).
	\end{aligned}
\end{equation}
Therefore, the second term on the right-hand side of \eqref{eq:4.10} satisfies
\begin{equation}
	\begin{aligned}
		\label{eq:4.14}
	-2\nu\mathbb{E}(\int_{\tau}^{t}e^{\beta s}(Au(s),\rho_n^2\dot{u}(s))ds)\leq& -2\nu\mathbb{E}(\int_{\tau}^{t}e^{\beta s}\frac{1}{2}d\|\rho_nBu(s)\|^2)\\&+4\nu n^{-1}c_0(2\delta^2+3)\mathbb{E}(\int_{\tau}^{t}e^{\beta s}(\|u(s)\|^2+\|v(s)\|^2)ds)\\\leq&-\nu\mathbb{E}(e^{\beta t}\|\rho_nBu(t)\|^2-e^{\beta\tau}\|\rho_nBu_{\tau}\|^2)+\nu\beta\mathbb{E}(\int_{\tau}^{t}e^{\beta s}\|\rho_nBu(s)\|^2ds)\\&+4\nu n^{-1}c_0(2\delta^2+3)\mathbb{E}(\int_{\tau}^{t}e^{\beta s}(\|u(s)\|^2+\|v(s)\|^2)ds).
	\end{aligned}
\end{equation}	
Similarly to \eqref{eq:4.11}, we obtain
\begin{equation}
	\begin{aligned}
		\label{eq:4.15}
		(Au(s),\rho_n^2u(s))=&(Bu(s),B(\rho_n^2u(s)))=\sum_{i\in\mathbb{Z}}(u_{i+1}(s)-u_i(s))(\rho^2\left(\frac{i+1}{n}\right)u_{i+1}(s)-\rho^2\left(\frac{i}{n}\right)u_i(s))\\=&\sum_{i\in\mathbb{Z}}\rho^2\left(\frac{i}{n}\right)(u_{i+1}(s)-u_i(s))^2+\sum_{i\in\mathbb{Z}}(\rho^2\left(\frac{i+1}{n}\right)-\rho^2\left(\frac{i}{n}\right))(u_{i+1}(s)-u_i(s))u_{i+1}(s).
	\end{aligned}
\end{equation}	
Note that the last term on the right-hand side of \eqref{eq:4.15} is bounded by
	\begin{equation}
		\begin{aligned}
			\label{eq:4.16}
			\sum_{i\in\mathbb{Z}}(\rho^2\left(\frac{i+1}{n}\right)-\rho^2\left(\frac{i}{n}\right))(u_{i+1}(s)-u_i(s))u_{i+1}(s)\leq4n^{-1}c_0\mathbb{E}(\int_{\tau}^{t}e^{\beta s}\|u(s)\|^2ds).
		\end{aligned}
	\end{equation}
It yields from  \eqref{eq:4.15} and \eqref{eq:4.16} that the third term on the right-hand side of \eqref{eq:4.10} satisfies
\begin{equation}
	\begin{aligned}
		\label{eq:4.17}
	-2\nu\delta\mathbb{E}(\int_{\tau}^{t}e^{\beta s}(Au(s),\rho_n^2u(s))ds)\leq-2\nu\delta\mathbb{E}(\int_{\tau}^{t}e^{\beta s}\|\rho_nBu(s)\|^2ds)+8\nu\delta n^{-1}c_0\mathbb{E}(\int_{\tau}^{t}e^{\beta s}\|u(s)\|^2ds).
	\end{aligned}
\end{equation}
By \eqref{eq:2.4}, the forth term on the right-hand side of \eqref{eq:4.10} satisfies
\begin{equation}
	\begin{aligned}
		\label{eq:4.18}
		&2\mathbb{E}(\int_{\tau}^{t}e^{\beta s}(f(s,u(s),\mathcal{L}_{u(s)}),\rho_n^2v(s))ds)\\\leq&\mathbb{E}(\int_{\tau}^{t}e^{\beta s}(\dfrac{2}{\alpha-\delta}\|\rho_nf(s,u(s),\mathcal{L}_{u(s)})\|^2+\dfrac{\alpha-\delta}{2}\|\rho_nv(s)\|^2)ds)\\\leq&\dfrac{6}{\alpha-\delta}\mathbb{E}(\int_{\tau}^{t}e^{\beta s}\sum_{i\in\mathbb{Z}}\rho^2\left(\frac{i}{n}\right)\gamma_i(s)(1+\left|u_i(s)\right|^2+\mathbb{E}\left|u_i(s)\right|^2)ds)+\dfrac{\alpha-\delta}{2}\mathbb{E}(\int_{\tau}^{t}e^{\beta s}\|\rho_nv(s)\|^2ds)\\\leq&\dfrac{6}{\alpha-\delta}\int_{\tau}^{t}e^{\beta s}\sum_{\left|i\right|\geq n}\left|\gamma_i(s)\right|ds+\dfrac{12}{\alpha-\delta}\|\gamma\|^2_{L^\infty([\tau,T],\ell^2)}\mathbb{E}(\int_{\tau}^{t}e^{\beta s}\|\rho_nu(s)\|^2ds)+\dfrac{\alpha-\delta}{2}\mathbb{E}(\int_{\tau}^{t}e^{\beta s}\|\rho_nv(s)\|^2ds).
	\end{aligned}
\end{equation}
For the forth term on the right-hand side of \eqref{eq:4.10}, we have
\begin{align}
	\label{eq:4.19}
	2\mathbb{E}(\int_{\tau}^{t}e^{\beta s}(g(s),\rho_n^2v(s))ds)\leq\frac{2}{\alpha-\delta}\int_{\tau}^{t}e^{\beta s}\sum_{\left|i\right|\geq n}\left|g_i(s)\right|^2ds+\frac{\alpha-\delta}{2}\mathbb{E}(\int_{\tau}^{t}e^{\beta s}\|\rho_nv(s)\|^2ds).
\end{align}
By \eqref{eq:2.7}, we see that for any $u = (u_i)_{i \in \mathbb{Z}} \in \ell^2$ and $\mu = (\mu_i)_{i \in \mathbb{Z}} \in \mathcal{L}^2$,
\begin{align*}
\|\rho_n\tilde{\sigma}(s, u(s),\mu)\|_{L_2(\ell^2,\ell^2)}^2\leq 4\|\kappa(s)\|^2(\|\rho_nu(s)\|^2+\sum_{i\in\mathbb{Z}}\rho^2\left(\frac{i}{n}\right)\mathbb{W}_2^{\mathbb{R}}(\mu, \delta_0)^2)+2\|\rho_n\chi(s)\|^2,
\end{align*}
which implies that
\begin{equation}
	\begin{aligned}
		\label{eq:4.20}
		&\mathbb{E}(\int_{\tau}^{t}e^{\beta s}\|\rho_n\tilde{\sigma}(s, u(s),\mathcal{L}_{u(s)})\|_{L_2(\ell^2,\ell^2)}^2ds)\\\leq&8\|\kappa\|^2_{L^\infty(\mathbb{R},\ell^2)}\mathbb{E}(\int_{\tau}^{t}e^{\beta s}\|\rho_nu(s)\|^2ds)+2\int_{\tau}^{t}e^{\beta s}\sum_{\left|i\right|\geq n}\left|\chi_i(s)\right|ds.
	\end{aligned}
\end{equation}
By \eqref{eq:4.14},\eqref{eq:4.17}--\eqref{eq:4.20}, we find from \eqref{eq:4.10} that
\begin{equation}
	\begin{aligned}
		\label{eq:4.21}
		&\mathbb{E}\left[e^{\beta t}((\lambda+\delta^2-\alpha\delta)\|\rho_nu(t)\|^2+\|\rho_nv(t)\|^2)+\nu\|\rho_nBu(t)\|^2\right]\\&+(2\delta-\beta)\mathbb{E}(\int_{\tau}^{t}e^{\beta s}(\lambda+\delta^2-\alpha\delta)\|\rho_nu(s)\|^2ds)\\&+[(\alpha-\delta)-\beta]\mathbb{E}(\int_{\tau}^{t}e^{\beta s}\|\rho_nv(s)\|^2ds)+
		(2\delta-\beta)\mathbb{E}(\int_{\tau}^{t}e^{\beta s}\nu\|\rho_nBu(s)\|^2ds)\\\leq&\mathbb{E}\left[e^{\beta \tau}((\lambda+\delta^2-\alpha\delta)\|\rho_nu_{\tau}\|^2+\|\rho_nv_{\tau}\|^2)+\nu\|\rho_nBu_{\tau}\|^2\right]\\&+4\nu n^{-1}c_0(2\delta^2+2\delta+3)\mathbb{E}(\int_{\tau}^{t}e^{\beta s}(\|u(s)\|^2+\|v(s)\|^2)ds)\\&+(\dfrac{12}{\alpha-\delta}\|\gamma\|^2_{L^\infty([\tau,T],\ell^2)}+8\|\kappa\|^2_{L^\infty(\mathbb{R},\ell^2)})\mathbb{E}(\int_{\tau}^{t}e^{\beta s}\|\rho_nu(s)\|^2ds)\\&+\dfrac{6}{\alpha-\delta}\int_{\tau}^{t}e^{\beta s}\sum_{\left|i\right|\geq n}\left|\gamma_i(s)\right|ds+\frac{2}{\alpha-\delta}\int_{\tau}^{t}e^{\beta s}\sum_{\left|i\right|\geq n}\left|g_i(s)\right|^2ds+2\int_{\tau}^{t}e^{\beta s}\sum_{\left|i\right|\geq n}\left|\chi_i(s)\right|ds.
	\end{aligned}
\end{equation}	
Following \eqref{eq:2.23}, then replacing where $\tau$ and $t$ by $\tau-t$ and $\tau$, respectively, we obtain
	\begin{equation}
		\begin{aligned}
			\label{eq:4.22}
			&\mathbb{E}\left[(\lambda+\delta^2-\alpha\delta)\|\rho_nu(\tau,\tau-t,u_{\tau-t})\|^2+\|\rho_nv(\tau,\tau-t,v_{\tau-t})\|^2+\nu\|\rho_nBu(\tau,\tau-t,u_{\tau-t})\|^2\right]\\\leq&\mathbb{E}\left[e^{-\beta t}((\lambda+\delta^2-\alpha\delta)\|\rho_nu_{\tau-t}\|^2+\|\rho_nv_{\tau-t}\|^2+\nu\|\rho_nBu_{\tau-t}\|^2)\right]\\&+4\nu n^{-1}c_0(2\delta^2+2\delta+3)\mathbb{E}(\int_{-\infty}^{\tau}e^{\beta( s-\tau)}(\|u(s)\|^2+\|v(s)\|^2)ds)\\&+\frac{2}{\alpha-\delta}\int_{-\infty}^{\tau}e^{\beta (s-\tau)}\sum_{\left|i\right|\geq n}\left|g_i(s)\right|^2ds+\dfrac{6}{\alpha-\delta}\int_{-\infty}^{\tau}e^{\beta (s-\tau)}\sum_{\left|i\right|\geq n}\left|\gamma_i(s)\right|ds+2\int_{-\infty}^{\tau}e^{\beta (s-\tau)}\sum_{\left|i\right|\geq n}\left|\chi_i(s)\right|ds.
		\end{aligned}
	\end{equation}
For the first term on the right-hand side of \eqref{eq:4.22}, since $\mathcal{L}_{\Phi_{\tau-t}}\in D_1(\tau-t)$, we get that
	\begin{align*}
		&e^{-\beta\tau} e^{\beta(\tau-t)}\mathbb{E}\left[(\lambda+\delta^2-\alpha\delta)\|u_{\tau-t}\|^2+\|nv_{\tau-t}\|^2+\nu\|Bu_{\tau-t}\|^2\right]\\\leq& (\lambda+\delta^2-\alpha\delta+1+4
		\nu)e^{-\beta\tau}e^{\beta(\tau-t)}\mathbb{E}\|D_1(\tau-t)\|_{\mathcal{P}_2(\ell^2\times\ell^2)}^2\\&\rightarrow0,
	\end{align*}
as $t\rightarrow\infty$. Hence for $\varepsilon>0$, there exists $T_2=T_2(\varepsilon,\tau,D_1)\geq T_1$ such that for all $t\geq T_2$,
	\begin{align*}
		&e^{-\beta t}\mathbb{E}\left[(\lambda+\delta^2-\alpha\delta)\|\rho_nu_{\tau-t}\|^2+\|\rho_nv_{\tau-t}\|^2+\nu\|\rho_nBu_{\tau-t}\|^2\right]\\\leq& \frac{\varepsilon}{4}.
	\end{align*}
	Moreover, it follows from Lemma \ref{lem:4.2} that there exists a costant $C_2>0$ indepensent of $n$ such that for all $t\geq \tau$,
\begin{equation*}
	\begin{aligned}
		&4\nu n^{-1}c_0(2\delta^2+2\delta+3)\mathbb{E}(\int_{-\infty}^{\tau}e^{\beta( s-\tau)}(\|u(s)\|^2+\|v(s)\|^2)ds)\\\leq&4\nu (\beta n)^{-1}c_0(2\delta^2+2\delta+3)\sup_{t\geq0}\mathbb{E}((\|u(\tau,\tau-t,u_{\tau-t})\|^2+\|v(\tau,\tau-t,v_{\tau-t})\|^2)ds) \\\leq& \frac{C_2}{n}\rightarrow0\text{\ \ as\ \ }n\rightarrow\infty.
\end{aligned}
\end{equation*}
Therefore,  there exists $N_1=N_1(\varepsilon,\tau)\in\mathbb{N}^+$ such that for all $n\geq N_1$,
\begin{equation*}
	\begin{aligned}
		4\nu n^{-1}c_0(2\delta^2+2\delta+3)\mathbb{E}(\int_{-\infty}^{\tau}e^{\beta( s-\tau)}(\|u(s)\|^2+\|v(s)\|^2)ds)\leq\frac{\varepsilon}{4}.
	\end{aligned}
\end{equation*}
For the fourth term on the right-hand side of \eqref{eq:4.22}, by using \eqref{eq:2.25}, we notice that
\begin{equation*}
	\begin{aligned}
		\frac{2}{\alpha-\delta}\int_{-\infty}^{\tau}e^{\beta (s-\tau)}\sum_{\left|i\right|\geq n}\left|g_i(s)\right|^2ds\leq\frac{2}{\alpha-\delta}\int_{-\infty}^{\tau}e^{\beta (s-\tau)}\|g(s)\|^2ds<\infty.
	\end{aligned}
\end{equation*}
By the dominated convergence theorem, there exists $N_2=N_2(\varepsilon,\tau)\geq N_1$ such that for all $n\geq N_2$,
\begin{equation*}
	\begin{aligned}
		\frac{2}{\alpha-\delta}\int_{-\infty}^{\tau}e^{\beta (s-\tau)}\sum_{\left|i\right|\geq n}\left|g_i(s)\right|^2ds\leq\frac{\varepsilon}{4}.
	\end{aligned}
\end{equation*}
For the last two terms on the right-hand side of \eqref{eq:4.22}, using the fact that $\gamma\in L^\infty_{loc}(\mathbb{R}, \ell^2)$ and $\chi\in L^\infty(\mathbb{R}, \ell^2)$, we know that
\begin{equation*}
	\begin{aligned}
	&\dfrac{6}{\alpha-\delta}\int_{-\infty}^{\tau}e^{\beta (s-\tau)}\sum_{\left|i\right|\geq n}\left|\gamma_i(s)\right|ds+2\int_{-\infty}^{\tau}e^{\beta (s-\tau)}\sum_{\left|i\right|\geq n}\left|\chi_i(s)\right|ds\\\leq&(\dfrac{6}{\alpha-\delta}+2)\int_{-\infty}^{\tau}e^{\beta (s-\tau)}(\left|\gamma(s)\right|^2+\left|\chi(s)\right|^2)ds\\\leq&(\dfrac{6}{\alpha-\delta}+2)\beta^{-1}(\|\gamma\|_{L^\infty_{loc}(\mathbb{R}, \ell^2)}^2+\|\chi\|_{L^\infty(\mathbb{R}, \ell^2)}^2).
	\end{aligned}
\end{equation*}
By the dominated convergence theorem again, there exists $N_3=N_3(\varepsilon,\tau)\geq N_2$ such that for all $n\geq N_3$,
\begin{equation*}
	\begin{aligned}
		\dfrac{6}{\alpha-\delta}\int_{-\infty}^{\tau}e^{\beta (s-\tau)}\sum_{\left|i\right|\geq n}\left|\gamma_i(s)\right|ds+2\int_{-\infty}^{\tau}e^{\beta (s-\tau)}\sum_{\left|i\right|\geq n}\left|\chi_i(s)\right|ds\leq\frac{\varepsilon}{4}.
	\end{aligned}
\end{equation*}
Combining all above, we have for all $t\geq T_2$ and $n\geq N_3$ that
\begin{align*}
	\mathbb{E}\left(\sum_{\left|i\right|\geq n}(\left|u_i(\tau,\tau-t,u_{\tau-t})\right|^2+\left|v_i(\tau,\tau-t,v_{\tau-t})\right|^2)\right)<\varepsilon,
\end{align*}
as desired.
\end{proof}
\begin{lem}
	\label{lem:4.4}
	Suppose (A1)-(A3), \eqref{eq:2.21}--\eqref{eq:2.26} hold. Then for every $ \tau\in\mathbb{R}$ and $D_2=\{D_2(t):t\in\mathbb{R}\}\in\mathcal{D}$, there exists $T:=T(\tau,D_2)>0$, such that for all $ t>T$, $\Phi_{\tau-t}=\binom{u_{\tau-t}}{v_{\tau-t}}\in L^2_{\mathcal{F}_{\tau-t}}(\Omega,\ell^2\times\ell^2)$ with $\mathcal{L}_{\Phi_{\tau-t}}\in D_2(\tau-t)$,  the solution $\Phi=\binom{u}{v}$ of system \eqref{eq:2.16} satisfies
\begin{equation}
	\begin{aligned}
		\label{eq:4.23}
		\mathbb{E}\left(\|u(\tau,\tau-t,u_{\tau-t})\|^4+\|v(\tau,\tau-t,v_{\tau-t})\|^4\right)\leq L_3+L_3\int_{-\infty}^{\tau}e^{2\beta(s-\tau)}\|g(s)\|^4ds,
	\end{aligned}
\end{equation}
where $L_3>0$ is a constant depending on $\beta,\ \alpha,\ \delta,\ \|\gamma\|_{L^\infty([\tau,T],\ell^2)}$ and $\|\chi\|_{L^\infty(\mathbb{R},\ell^2)}$, but independent of $\tau$ and $D_2$.
\end{lem}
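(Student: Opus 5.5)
We follow the pattern of Lemma \ref{lem:4.1}, now at the level of fourth moments. Put
\[
Y(t):=(\lambda+\delta^2-\alpha\delta)\|u(t)\|^2+\|v(t)\|^2+\nu\|Bu(t)\|^2 .
\]
Itô's formula in Lemma \ref{lem:4.1} gives, pathwise,
\[
dY=\Big[-2\delta(\lambda+\delta^2-\alpha\delta)\|u\|^2-2(\alpha-\delta)\|v\|^2-2\nu\delta\|Bu\|^2+2(f,v)+2(g,v)+\|\tilde\sigma\|^2_{L_2}\Big]dt+2(v,\tilde\sigma\,dW).
\]
The $\nu$-terms combine exactly as in the derivation of \eqref{eq:4.2}, using $(Au,\dot u)=\frac12\frac{d}{dt}\|Bu\|^2$ and $\dot u=v-\delta u$.

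Apply Itô's formula to $Y^2$:
\[
d(Y^2)=2Y\,dY+4\|\tilde\sigma^*v\|^2dt \le 2Y\,dY+4\|v\|^2\|\tilde\sigma\|^2_{L_2}dt .
\]
Multiply by $e^{2\beta t}$ and take expectations; the martingale term drops out after a standard localization, justified by Remark 3.1, which gives $\Phi\in L^4(\Omega,C([\tau,T],\ell^2\times\ell^2))$. The dissipative part of $2Y\,dY$ is bounded above by $-2\cdot 2\beta Y^2$, roughly, up to the constants arising from $\beta=\min\{\delta,\alpha-\delta\}$. The remaining terms are handled as follows.
\begin{itemize}
\item[(i)] $4Y(f,v)$: use \eqref{eq:2.13} together with $\rho(\mathcal{L}_u,\hat\delta_0)^2\le\mathbb{E}\|u\|^2$ from \eqref{eq:2.8}. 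This produces $Y\|u\|\|v\|$ and $Y\|v\|(\mathbb{E}\|u\|^2)^{1/2}$. After Young's inequality and $(\mathbb{E}\|u\|^2)^2\le\mathbb{E}\|u\|^4$, each is bounded by $\varepsilon_2$-weighted or $\|\gamma\|^4$-weighted multiples of $\mathbb{E}Y^2$.
\item[(ii)] $4Y(g,v)$: bound by $\varepsilon_1 Y^2+C_{\varepsilon_1}\|g\|^4$.
\item[(iii)] $2Y\|\tilde\sigma\|^2_{L_2}+4\|v\|^2\|\tilde\sigma\|^2_{L_2}$: use \eqref{eq:2.14}. The quartic parts carry the factor $\|\kappa\|^2$; after Young with parameter $\varepsilon_3$ they give $576\varepsilon_3^{-1}\|\kappa\|^4$-type coefficients. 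The $\chi$ parts give $\varepsilon_1\mathbb{E}Y^2+C\|\chi\|^4$.
\end{itemize}
Condition \eqref{eq:2.24} is precisely what is needed to absorb all $\mathbb{E}Y^2$ coefficients into the dissipation while retaining a net rate $2\beta$. This yields the differential inequality
\[
\frac{d}{dt}\big(e^{2\beta t}\mathbb{E}Y^2\big)\le e^{2\beta t}\big(C_1\|g(t)\|^4+C_2\big),
\]
with $C_1,C_2$ depending only on $\alpha,\delta,\beta,\|\gamma\|,\|\chi\|$.

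Integrate over $(\tau-t,\tau)$. Then
\[
\mathbb{E}Y(\tau)^2\le e^{-2\beta t}\mathbb{E}Y(\tau-t)^2+C_1\int_{-\infty}^\tau e^{2\beta(s-\tau)}\|g(s)\|^4ds+C_2/(2\beta).
\]
By \eqref{eq:2.10}, $Y(\tau-t)\le C\|\Phi_{\tau-t}\|^2$, so
\[
e^{-2\beta t}\mathbb{E}Y(\tau-t)^2\le Ce^{-2\beta\tau}e^{2\beta(\tau-t)}\|D_2(\tau-t)\|^4_{\mathcal{P}_4}\to0
\]
by the definition of $\mathcal{D}$. Choose $T$ so that this term is at most $1$. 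Finally, $\|u\|^4+\|v\|^4\le cY^2$ with $c$ depending on $(\lambda+\delta^2-\alpha\delta)^{-2}$, which gives \eqref{eq:4.23}. The integral is finite by \eqref{eq:2.26}.

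The main obstacle is step (iii), together with the distribution-dependent cross terms in (i). The Itô correction for $Y^2$ produces $\|v\|^2\|\tilde\sigma\|^2$, which is quartic in the state and also contains $\mathbb{E}\|u\|^2$ coupled with the random $\|v\|^2$. To close these estimates with the available margin, we will bound every mixed product $\|v\|^2\mathbb{E}\|u\|^2$ by $\frac12(\|v\|^4+(\mathbb{E}\|u\|^2)^2)$ and then apply Jensen. We will then check that the resulting coefficients are dominated through \eqref{eq:2.23}--\eqref{eq:2.24}.
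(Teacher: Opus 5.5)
\textbf{The gap is the absorption step, and \eqref{eq:2.24} is not the condition that closes your estimate.}

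Write $c_0:=\lambda+\delta^2-\alpha\delta$. Your $Y$ controls $\|u\|^2$ only through $Y\ge c_0\|u\|^2$, so the only pure $\|u\|^4$ dissipation in $d(Y^2)$ is $2Y\cdot 2\delta c_0\|u\|^2\ge 4\delta c_0^2\|u\|^4$. Consequently every $u$-perturbation produced in (i) and (iii) becomes a multiple of $\mathbb{E}Y^2$ only after dividing by $c_0$ or $c_0^2$. These perturbations are $\|\gamma\|^4\|u\|^4$, $\|\kappa\|^4\|u\|^4$, $\|\kappa\|^2Y\|u\|^2$ and $\mathbb{E}Y\,\mathbb{E}\|u\|^2$.

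Condition \eqref{eq:2.24} gives no lower bound on $c_0$; it even forces $\varepsilon_2c_0^4<2\delta$. Its coefficients $3\varepsilon_1$, $\varepsilon_2c_0^4$, $256\varepsilon_2\nu^4$, $54\varepsilon_2\|\gamma\|^4$, $576\varepsilon_3^{-1}\|\kappa\|^4$ have a different origin. They are exactly the $\|u\|^4$-costs of Young's inequality applied to $4\|u\|^3\|v\|$, $4c_0\|u\|\|v\|^3$, $16\nu\|u\|\|v\|^3$, $4\|v\|^3\|f\|$ and $6\|v\|^2\|\tilde\sigma\|^2$. These terms arise when It\^o's formula is applied to $e^{2\beta t}\|u\|^4$ and $e^{2\beta t}\|v\|^4$ separately, which is what the paper does. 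In that scheme $\|u\|^4$ has its own dissipation $4\delta\|u\|^4$, coming from $d\|u\|^2=2(u,v)\,dt-2\delta\|u\|^2dt$ and independent of $c_0$. Each of the six Young splittings costs $\frac{\alpha-\delta}{3}\|v\|^4$, and these are absorbed by $2(2(\alpha-\delta)-\beta)\|v\|^4$. So \eqref{eq:2.24} pays precisely for the couplings your $Y$ cancels, and does nothing for the terms your scheme actually has to control.

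\textbf{What your route needs instead, and why the hypotheses do not supply it.} You need a lower bound on $c_0$ of \eqref{eq:2.23} type, but \eqref{eq:2.23} is calibrated for second moments. In $d(Y^2)$ the noise enters through $2Y\|\tilde\sigma\|^2+4\|\tilde\sigma^*v\|^2\le 6Y\|\tilde\sigma\|^2$, three times its relative weight in \eqref{eq:4.2}. In addition, the law-dependent term $\mathbb{E}Y\,\mathbb{E}\|u\|^2$ is not dominated by $\mathbb{E}(Y\|u\|^2)$. Reducing everything to $\mathbb{E}Y^2$ as you propose, the $\kappa$-part of the noise gets coefficient up to $48\|\kappa\|^2/c_0$. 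This must stay below the margin $2\beta$ left after $2YD\ge4\beta Y^2$ is spent on the weight $e^{2\beta t}$.

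That requirement fails under the stated hypotheses. Take $f\equiv0$, $\alpha=10$, $\delta=3$ (so $\beta=3$ and $3\varepsilon_1\approx2.26$), $\nu=0.1$, $\|\kappa\|^2_{L^\infty(\mathbb{R},\ell^2)}=0.03$, $\lambda=21.09$ (so $c_0=0.09$), and $g$ bounded. Then \eqref{eq:2.21}--\eqref{eq:2.26} all hold, yet $48\|\kappa\|^2/c_0=16>6=2\beta$, so the estimate you describe does not close.

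To rescue the $Y^2$ route you would need either an extra, strengthened version of \eqref{eq:2.23}, which changes the lemma, or a much finer monomial-by-monomial bookkeeping that you have not supplied. The straightforward repair is to drop $Y^2$ and estimate $\|u\|^4$ and $\|v\|^4$ separately as described above. Separately, your last step $\|u\|^4+\|v\|^4\le cY^2$ makes $L_3$ depend on $c_0^{-2}$, hence on $\lambda$, which is not among the dependencies the statement lists.
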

\begin{proof}
Let $\Phi(t)=\binom{u(t)}{v(t)}=\binom{u(\tau,\tau-t,u_{\tau-t})}{v(\tau,\tau-t,v_{\tau-t})}$ be the solution of \eqref{eq:2.16} with initial data $\Phi_{\tau-t}=\binom{u_{\tau-t}}{v_{\tau-t}}$ at initial time $\tau-t.$\par
By \eqref{eq:4.2} and $It\hat{o}$ formula, we have for all $t\geq\tau$,
\begin{align*}
	e^{2\beta t}\|u(t)\|^4=e^{2\beta\tau}\|u_{\tau}\|^4+2(\beta-2\delta)\int_{\tau}^{t}e^{2\beta s}\|u(s)\|^4ds+4\int_{\tau}^{t}e^{2\beta s}\|u(s)\|^2(u(s),v(s))ds,
\end{align*}
and
\begin{align*}
	e^{2\beta t}\|v(t)\|^4=&e^{2\beta\tau}\|v_{\tau}\|^4+4\int_{\tau}^{t}e^{2\beta s}\|v(s)\|^2(v(s),\tilde{\sigma}(s, u(s),\mathcal{L}_{u(s)})dW(s))\\&+2[\beta-2(\alpha-\delta)]\int_{\tau}^{t}e^{2\beta s}\|v(s)\|^4ds
	-4(\lambda+\delta^2-\alpha\delta)\int_{\tau}^{t}e^{2\beta s}\|v(s)\|^2(u(s),v(s))ds\\&-4\nu\int_{\tau}^{t}e^{2\beta s}\|v(s)\|^2(Au(s),v(s))ds+4\int_{\tau}^{t}e^{2\beta s}\|v(s)\|^2(f(s,u(s),\mathcal{L}_{u(s)}),v(s))ds\\&+4\int_{\tau}^{t}e^{2\beta s}\|v(s)\|^2(g(s),v(s))ds+6\int_{\tau}^{t}e^{2\beta s}\|v(s)\|^2\|\tilde{\sigma}(s, u(s),\mathcal{L}_{u(s)})\|_{L_2(\ell^2,\ell^2)}^2ds.
\end{align*}
Therefore, we have
\begin{equation}
	\begin{aligned}
		\label{eq:4.24}
		&\mathbb{E}\left(e^{2\beta t}(\|u(t)\|^4+\|v(t)\|^4)\right)+\mathbb{E}\left(2(2\delta-\beta)\int_{\tau}^{t}e^{2\beta s}\|u(s)\|^4ds+2(2(\alpha-\delta)-\beta)\int_{\tau}^{t}e^{2\beta s}\|v(s)\|^4ds\right)\\=&\mathbb{E}\left(e^{2\beta\tau}(\|u_{\tau}\|^4+\|v_{\tau}\|^4)\right)+4\mathbb{E}\left(\int_{\tau}^{t}e^{2\beta s}\|u(s)\|^2(u(s),v(s))ds\right)\\&-4(\lambda+\delta^2-\alpha\delta)\mathbb{E}\left(\int_{\tau}^{t}e^{2\beta s}\|v(s)\|^2(u(s),v(s))ds\right)-4\nu\mathbb{E}\left(\int_{\tau}^{t}e^{2\beta s}\|v(s)\|^2(Au(s),v(s))ds\right)\\&+4\mathbb{E}\left(\int_{\tau}^{t}e^{2\beta s}\|v(s)\|^2(f(s,u(s),\mathcal{L}_{u(s)}),v(s))ds\right)+4\mathbb{E}\left(\int_{\tau}^{t}e^{2\beta s}\|v(s)\|^2(g(s),v(s))ds\right)\\&+6\mathbb{E}\left(\int_{\tau}^{t}e^{2\beta s}\|v(s)\|^2\|\tilde{\sigma}(s, u(s),\mathcal{L}_{u(s)})\|_{L_2(\ell^2,\ell^2)}^2ds\right).
	\end{aligned}
\end{equation}
Then, we estimate each terms on the right-hand side of \eqref{eq:4.24} separately. For the second term, third term and sixth term, by H$\ddot{o}$lder and Young's inequality, we obtain,
\begin{equation}
	\begin{aligned}
		\label{eq:4.25}
		4\mathbb{E}\left(\int_{\tau}^{t}e^{2\beta s}\|u(s)\|^2(u(s),v(s))ds\right)&\leq4\mathbb{E}\left(\int_{\tau}^{t}e^{2\beta s}\|u(s)\|^3\|v(s)\|ds\right)
		\\&\leq\dfrac{(\alpha-\delta)}{3}\mathbb{E}\left(\int_{\tau}^{t}e^{2\beta s}\|v(s)\|^4ds\right)+3\varepsilon_1\mathbb{E}\left(\int_{\tau}^{t}e^{2\beta s}\|u(s)\|^4ds\right),
	\end{aligned}
\end{equation}
\begin{equation}
	\begin{aligned}
		\label{eq:4.26}
		&\left|-4(\lambda+\delta^2-\alpha\delta)\mathbb{E}\left(\int_{\tau}^{t}e^{2\beta s}\|v(s)\|^2(u(s),v(s))ds\right)\right|\\\leq&4(\lambda+\delta^2-\alpha\delta)\mathbb{E}\left(\int_{\tau}^{t}e^{2\beta s}\|u(s)\|\|v(s)\|^3ds\right)
		\\\leq&\dfrac{(\alpha-\delta)}{3}\mathbb{E}\left(\int_{\tau}^{t}e^{2\beta s}\|v(s)\|^4ds\right)+\varepsilon_2(\lambda+\delta^2-\alpha\delta)^4\mathbb{E}\left(\int_{\tau}^{t}e^{2\beta s}\|u(s)\|^4ds\right),
	\end{aligned}
\end{equation}
and
\begin{align}
	\label{eq:4.27}
	4\mathbb{E}\left(\int_{\tau}^{t}e^{2\beta s}\|v(s)\|^2(g(s),v(s))ds\right)\leq\dfrac{(\alpha-\delta)}{3}\mathbb{E}\left(\int_{\tau}^{t}e^{2\beta s}\|v(s)\|^4ds\right)+\varepsilon_2\mathbb{E}\left(\int_{\tau}^{t}e^{2\beta s}\|g(s)\|^4ds\right),
\end{align}
where $\varepsilon^3_1=\dfrac{3}{(\alpha-\delta)}$, $\varepsilon_2=\left(\dfrac{9}{(\alpha-\delta)}\right)^3$ are positive constants.\par
For the fourth term	on the right-hand side of \eqref{eq:4.24}, by \eqref{eq:2.10}, we see,
\begin{equation}
	\begin{aligned}
		\label{eq:4.28}
		&\left|-4\nu\mathbb{E}\left(\int_{\tau}^{t}e^{2\beta s}\|v(s)\|^2(Au(s),v(s))ds\right)\right|\\\leq&16\nu\mathbb{E}\left(\int_{\tau}^{t}e^{2\beta s}\|u(s)\|\|v(s)\|^3ds\right)
		\\\leq&\dfrac{(\alpha-\delta)}{3}\mathbb{E}\left(\int_{\tau}^{t}e^{2\beta s}\|v(s)\|^4ds\right)+256\varepsilon_2\nu^4\mathbb{E}\left(\int_{\tau}^{t}e^{2\beta s}\|u(s)\|^4ds\right).
	\end{aligned}
\end{equation}	
For the fifth term	on the right-hand side of \eqref{eq:4.24}, by \eqref{eq:2.13} and \eqref{eq:2.8}, we get,
\begin{equation}
	\begin{aligned}
		\label{eq:4.29}
		&4\mathbb{E}\left(\int_{\tau}^{t}e^{2\beta s}\|v(s)\|^2(f(s,u(s),\mathcal{L}_{u(s)}),v(s))ds\right)\\\leq&\dfrac{(\alpha-\delta)}{3}\mathbb{E}\left(\int_{\tau}^{t}e^{2\beta s}\|v(s)\|^4ds\right)+\varepsilon_2\mathbb{E}\left(\int_{\tau}^{t}e^{2\beta s}\|f(s,u(s),\mathcal{L}_{u(s)})\|^4ds\right)\\\leq&\dfrac{(\alpha-\delta)}{3}\mathbb{E}\left(\int_{\tau}^{t}e^{2\beta s}\|v(s)\|^4ds\right)+\varepsilon_2\mathbb{E}\left(\int_{\tau}^{t}e^{2\beta s}[9\|\gamma(s)\|^4(3+3\|u(s)\|^4+3(\mathbb{E}\|u(s)\|^2)^2)]ds\right)\\\leq&\dfrac{(\alpha-\delta)}{3}\mathbb{E}\left(\int_{\tau}^{t}e^{2\beta s}\|v(s)\|^4ds\right)+27\varepsilon_2\mathbb{E}\left(\int_{\tau}^{t}e^{2\beta s}\|\gamma(s)\|^4(1+2\|u(s)\|^4)ds\right).
	\end{aligned}
\end{equation}	
For the last term	on the right-hand side of \eqref{eq:4.24}, by \eqref{eq:2.14} and \eqref{eq:2.8}, we know,
\begin{equation}
	\begin{aligned}
		\label{eq:4.30}
		&6\mathbb{E}\left(\int_{\tau}^{t}e^{2\beta s}\|v(s)\|^2\|\tilde{\sigma}(s, u(s),\mathcal{L}_{u(s)})\|_{L_2(\ell^2,\ell^2)}^2ds\right)\\\leq&\dfrac{(\alpha-\delta)}{3}\mathbb{E}\left(\int_{\tau}^{t}e^{2\beta s}\|v(s)\|^4ds\right)+\dfrac{9}{2\varepsilon_3}\mathbb{E}\left(\int_{\tau}^{t}e^{2\beta s}\|\tilde{\sigma}(s, u(s),\mathcal{L}_{u(s)})\|_{L_2(\ell^2,\ell^2)}^4ds\right)\\\leq&\dfrac{(\alpha-\delta)}{3}\mathbb{E}\left(\int_{\tau}^{t}e^{2\beta s}\|v(s)\|^4ds\right)+\dfrac{9}{2\varepsilon_3}\mathbb{E}\left(\int_{\tau}^{t}e^{2\beta s}[32\|\kappa(s)\|^4(2\|u(s)\|^4+2(\mathbb{E}\|u(s)\|^2)^2)+8\|\chi(s)\|^2]ds\right)\\\leq&\dfrac{(\alpha-\delta)}{3}\mathbb{E}\left(\int_{\tau}^{t}e^{2\beta s}\|v(s)\|^4ds\right)+\dfrac{576}{\varepsilon_3}\|\kappa\|_{L^\infty(\mathbb{R}, \ell^2)}^4\mathbb{E}\left(\int_{\tau}^{t}e^{2\beta s}\|u(s)\|^4ds\right)+\dfrac{36}{\varepsilon_3}\mathbb{E}\left(\int_{\tau}^{t}e^{2\beta s}\|\chi(s)\|^4ds\right),
	\end{aligned}
\end{equation}	
where $\varepsilon_3=\dfrac{(\alpha-\delta)}{3}$.\par 
It follows from \eqref{eq:4.24}--\eqref{eq:4.30} that for all $t\geq \tau$,
\begin{equation}
	\begin{aligned}
		\label{eq:4.31}
		&\mathbb{E}\left(e^{2\beta t}(\|u(t)\|^4+\|v(t)\|^4)\right)+\mathbb{E}\left(2(2\delta-\beta)\int_{\tau}^{t}e^{2\beta s}\|u(s)\|^4ds+2(2(\alpha-\delta)-\beta)\int_{\tau}^{t}e^{2\beta s}\|v(s)\|^4ds\right)\\\leq&\mathbb{E}\left(e^{2\beta\tau}(\|u_{\tau}\|^4+\|v_{\tau}\|^4)\right)\\&+\{3\varepsilon_1+\varepsilon_2[(\lambda+\delta^2-\alpha\delta)^4+256\nu^4+54\|\gamma\|^4_{L^\infty([\tau,T],\ell^2)}]+\dfrac{576}{\varepsilon_3}\|\kappa\|_{L^\infty(\mathbb{R}, \ell^2)}^4\}\mathbb{E}\left(\int_{\tau}^{t}e^{2\beta s}\|u(s)\|^4ds\right)\\&+2(\alpha-\delta)\mathbb{E}\left(\int_{\tau}^{t}e^{2\beta s}\|v(s)\|^4ds\right)+27\varepsilon_2\mathbb{E}\left(\int_{\tau}^{t}e^{2\beta s}\|\gamma(s)\|^4ds\right)\\&+\varepsilon_2\mathbb{E}\left(\int_{\tau}^{t}e^{2\beta s}\|g(s)\|^4ds\right)+\dfrac{36}{\varepsilon_3}\mathbb{E}\left(\int_{\tau}^{t}e^{2\beta s}\|\chi(s)\|^4ds\right).
	\end{aligned}
\end{equation}
Replacing $\tau$ and $t$ in \eqref{eq:4.31} by $\tau-t$ and $\tau$, respectively, by \eqref{eq:2.24} we have for all $t\geq \tau$,
\begin{equation}
	\begin{aligned}
		\label{eq:4.32}
		\mathbb{E}\left(\|u(\tau,\tau-t,u_{\tau-t})\|^4+\|v(\tau,\tau-t,v_{\tau-t})\|^4\right)\leq&\mathbb{E}\left(e^{-2\beta t}(\|u_{\tau-t}\|^4+\|v_{\tau-t}\|^4)\right)\\&+27\varepsilon_2{(2\beta)}^{-1}\|\gamma\|^4_{L^\infty([\tau,T],\ell^2)}+18{(\varepsilon_3\beta)}^{-1}\|\chi\|_{L^\infty(\mathbb{R}, \ell^2)}^4\\&+\varepsilon_2\mathbb{E}\left(\int_{\tau-t}^{\tau}e^{2\beta(s-\tau)}\|g(s)\|^4ds\right).
	\end{aligned}
\end{equation}
Note that $\mathcal{L}_{\Phi_{\tau-t}}\in D_2(\tau-t)$ with $D_2\in\mathcal{D}$, we get
\begin{equation*}
	\begin{aligned}
		\lim_{t \to\infty}e^{-2\beta t}\mathbb{E}\left(\|u_{\tau-t}\|^4+\|v_{\tau-t}\|^4\right)\leq e^{-2\beta\tau}\lim_{t \to\infty}e^{2\beta(\tau-t)}\|D_2(\tau-t)\|_{\mathcal{P}_4(\ell^2\times\ell^2)}^2=0.
	\end{aligned}
\end{equation*}
Hence, there exists $T=T(\tau,D_2)$ such that for all $t\geq T$,
\begin{equation*}
	\begin{aligned}
		e^{-2\beta t}\mathbb{E}\left(\|u_{\tau-t}\|^4+\|v_{\tau-t}\|^4\right)\leq1,
	\end{aligned}
\end{equation*}
Then we can obtain that for all $t\geq T$,
\begin{equation*}
	\begin{aligned}
			\mathbb{E}\left(\|u(\tau,\tau-t,u_{\tau-t})\|^4+\|v(\tau,\tau-t,v_{\tau-t})\|^4\right)\leq&1+27\varepsilon_2{(2\beta)}^{-1}\|\gamma\|^4_{L^\infty([\tau,T],\ell^2)}+18{(\varepsilon_3\beta)}^{-1}\|\chi\|_{L^\infty(\mathbb{R}, \ell^2)}^4\\&+\varepsilon_2\mathbb{E}\left(\int_{\tau-t}^{\tau}e^{2\beta(s-\tau)}\|g(s)\|^4ds\right).
	\end{aligned}
\end{equation*}
which along with \eqref{eq:2.26} completes the proof.

\end{proof}
\section{Existence of pullback measure attractors}
\indent
\par
In this section, we are dedicated proving the existence of $\mathcal{D}$-pullback measure attractors  of system \eqref{eq:2.16} in $\mathcal{P}_4(\ell^2\times\ell^2)$. For this purpose, we first define a  non-autonomous dynamical system in $\mathcal{P}_2(\ell^2\times\ell^2)$.\par 
Given $\tau,t\in\mathbb{R}$ with $t\geq\tau$, define $p^*(\tau,t):\mathcal{P}_2(\ell^2\times\ell^2)\rightarrow\mathcal{P}_2(\ell^2\times\ell^2)$ by, for every $\mu\in\mathcal{P}_2(\ell^2\times\ell^2)$,
	\begin{align}
	\label{eq:5.1}
	p^*_{\tau,t}\mu=\mathcal{L}_{\Phi(t,\tau,\Phi_{\tau})},
\end{align}
where $\Phi(t,\tau,\Phi_{\tau})$ is the solution of \eqref{eq:2.16} with $\Phi_{\tau}=\binom{u_{\tau}}{v_{\tau}}\in L^2(\Omega, C([\tau, T], \ell^2\times\ell^2)) $ such that $\mathcal{L}_{\Phi_{\tau}}=\mu$.\par 
For every $t\in \mathbb{R}^+$ and $\tau\in \mathbb{R}$, we define $\Theta(t,\tau):\mathcal{P}_2(\ell^2\times\ell^2)\rightarrow\mathcal{P}_2(\ell^2\times\ell^2)$ by
	\begin{align}
	\label{eq:5.2}
	\Theta(t,\tau)\mu=p^*(\tau,\tau+t)\mu,\ \ \ \ \ \forall\mu\in\mathcal{P}_2(\ell^2\times\ell^2).
\end{align}
By the uniqueness of solutions of \eqref{eq:2.16}, we can get that for all $t,s\in \mathbb{R}^+$, $\tau\in \mathbb{R}$ and $\Phi_{\tau}\in L^2_{\mathcal{F}_{\tau}}(\Omega, \ell^2\times\ell^2)$,
	\begin{align*}
	\Phi(t+s+\tau,\tau,\Phi_{\tau})=\Phi(t+s+\tau,s+\tau,\Phi(s+\tau,\tau,\Phi_{\tau})),
\end{align*}
hence, for all $t,s\in \mathbb{R}^+$, $\tau\in \mathbb{R}$ and $\mu\in\mathcal{P}_2(\ell^2\times\ell^2)$,
	\begin{align}
	\label{eq:5.3}
	\Theta(t+s,\tau)\mu=\Theta(t,s+\tau)(\Theta(s,\tau)\mu),\ \ \ \ \ \forall\mu\in\mathcal{P}_4(\ell^2\times\ell^2).
\end{align}
Obviously, $\Theta$ is a non-autonomous dynamical system on $\mathcal{P}_2(\ell^2\times\ell^2)$.
\par
Next we will prove $\Theta$ is continuous over bounded subset $D_2$ of $\mathcal{P}_4(\ell^2\times\ell^2)$. To this end, we first prove the following lemma.
\begin{lem}
	\label{lem:5.1}
	Suppose (A1)-(A3) hold. Then, for any $\mu_{1},\mu_{2}\in\mathcal{P}_2(\ell^2\times\ell^2)$ and $t\geq\tau$,
	\begin{align*}
	\mathbb{W}_2(p^*_{\tau,t}\mu_1,	p^*_{\tau,t}\mu_2)\leq a_1\mathbb{W}_2(\mu_{1},\mu_{2})e^{a_2(t-\tau)},
\end{align*}
where $a_1,a_2>0$ are two constants independent of $\tau$ and $t$.	
\end{lem}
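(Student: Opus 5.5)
Our plan is a synchronous coupling argument combined with an energy estimate for the difference of two solutions driven by the same noise. Fix $\mu_1,\mu_2\in\mathcal{P}_2(\ell^2\times\ell^2)$ and choose an optimal coupling $\pi\in\Pi(\mu_1,\mu_2)$ (optimal couplings exist on the Polish space $\ell^2\times\ell^2$). On an enlarged filtered probability space carrying the cylindrical Wiener process $W$, we realize a pair of $\mathcal{F}_\tau$-measurable random variables $(\Phi_{1,\tau},\Phi_{2,\tau})$ with joint law $\pi$, independent of the increments of $W$ after $\tau$. Then
\[
\mathbb{E}\|\Phi_{1,\tau}-\Phi_{2,\tau}\|^2=\mathbb{W}_2(\mu_1,\mu_2)^2 .
\]
Let $\Phi_j(t)=\binom{u_j(t)}{v_j(t)}$ be the strong solutions of \eqref{eq:2.16} from Proposition \ref{pro:3.1} with these initial data. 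By weak uniqueness (Yamada--Watanabe, as noted after Remark 3.1), $\mathcal{L}_{\Phi_j(t)}=p^*_{\tau,t}\mu_j$, independently of how the initial data are realized. Hence
\[
\mathbb{W}_2(p^*_{\tau,t}\mu_1,p^*_{\tau,t}\mu_2)^2\le \mathbb{E}\|\Phi_1(t)-\Phi_2(t)\|^2,
\]
and it suffices to bound the right-hand side.

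Set $a=u_1-u_2$ and $b=v_1-v_2$. As in Step 2 of Proposition \ref{pro:3.1}, apply the It\^o formula to $\|a\|^2$ and $\|b\|^2$ and take expectations; the martingale term vanishes because both solutions lie in $L^2(\Omega,C([\tau,T],\ell^2\times\ell^2))$, so no supremum and no BDG inequality are needed. The terms are handled one by one:
\begin{itemize}
\item The cross terms $(a,b)$ and $\nu(Aa,b)$ are bounded by $C(\|a\|^2+\|b\|^2)$ using \eqref{eq:2.10}.
\item The drift difference is controlled by Young's inequality together with \eqref{eq:2.11} and \eqref{eq:2.8}:
\[
2\mathbb{E}(b,f(t,u_1,\mathcal{L}_{u_1})-f(t,u_2,\mathcal{L}_{u_2}))\le \mathbb{E}\|b\|^2+4\|\psi(t)\|^2\,\mathbb{E}\|a\|^2 .
\]
The key point is that $\rho(\mathcal{L}_{u_1},\mathcal{L}_{u_2})^2\le\mathbb{E}\|a\|^2$ because the two solutions are coupled.
\item The It\^o correction is bounded by $4\|\kappa(t)\|^2\,\mathbb{E}\|a\|^2$ using \eqref{eq:2.12}.
\end{itemize}
Combining these estimates yields
\[
\frac{d}{dt}\mathbb{E}\|\Phi_1-\Phi_2\|^2\le 2a_2\,\mathbb{E}\|\Phi_1-\Phi_2\|^2 ,
\]
and Gronwall's lemma gives
\[
\mathbb{E}\|\Phi_1(t)-\Phi_2(t)\|^2\le e^{2a_2(t-\tau)}\mathbb{E}\|\Phi_{1,\tau}-\Phi_{2,\tau}\|^2 .
\]
Taking square roots proves the lemma with $a_1=1$, or with $a_1$ a norm-equivalence constant if the weighted energy $(\lambda+\delta^2-\alpha\delta)\|a\|^2+\|b\|^2$ is used instead.

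The main obstacle is that $a_2$ must be independent of $\tau$ and $t$, whereas (A1) only gives $\psi\in L^\infty_{loc}(\mathbb{R},\ell^2)$. Since $\kappa\in L^\infty(\mathbb{R},\ell^2)$, the diffusion part poses no difficulty. For the drift, we would either read $\|\psi\|_{L^\infty}$ as a global bound, as the paper implicitly does with $\|\gamma\|_{L^\infty([\tau,T],\ell^2)}$, or keep the factor
\[
\exp\Bigl(\int_\tau^t C\bigl(1+\|\psi(s)\|^2\bigr)\,ds\Bigr)
\]
and note that it reduces to $e^{a_2(t-\tau)}$ under a uniform bound on $\psi$. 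A secondary technical point is justifying the coupled construction of the initial data and the identification of the laws, which rests on the weak uniqueness result cited from \cite{32}.
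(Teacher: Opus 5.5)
Your proposal is correct and follows essentially the same route as the paper. Both couple the initial data optimally, drive the two McKean--Vlasov solutions with the same Wiener process, apply It\^o's formula to the difference, control the drift and diffusion differences through \eqref{eq:2.11}, \eqref{eq:2.12} and the coupling bound \eqref{eq:2.8}, and close with Gronwall.

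The differences are minor:
\begin{itemize}
\item The paper uses the weighted energy $(\lambda+\delta^2-\alpha\delta)\|y\|^2+\|z\|^2+\nu\|By\|^2$ so that the cross terms cancel. You bound those terms crudely instead, which is harmless on a finite time interval.
\item The paper invokes the one-sided bound \eqref{eq:2.3} for the state part of the drift. Your direct use of the Lipschitz bound \eqref{eq:2.11} is cleaner there.
\item Your caveat that $a_2$ is independent of $\tau$ only under a global bound on $\psi$ applies equally to the paper's proof.
\end{itemize}
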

\begin{proof}
Let $\Phi_1(t)=\binom{u_1(t,\tau,u_{1\tau})}{v_1(t,\tau,v_{1\tau})}$, $\Phi_{1\tau}=\binom{u_{1\tau}}{v_{1\tau}}$  with $\mathcal{L}_{u_{1\tau}}=\mu_{1}$ and $\Phi_2(t)=\binom{u_2(t,\tau,u_{2\tau})}{v_2(t,\tau,v_{2\tau})}$, $\Phi_{2\tau}=\binom{u_{2\tau}}{v_{2\tau}}$  with $\mathcal{L}_{u_{2\tau}}=\mu_{2}$ be two solutions of \eqref{eq:2.16}. Let $y(t)=u_1(t,\tau,u_{1\tau})-u_2(t,\tau,u_{2\tau})$,  $z(t)=v_1(t,\tau,v_{1\tau})-v_2(t,\tau,v_{2\tau})$ and  
	\begin{align*}
	\mathbb{W}_2(\mu_{1},\mu_{2})^2=\mathbb{E}\|u_{1\tau}-u_{2\tau}\|^2.
\end{align*}
Then, by \eqref{eq:2.16}, we have that for all $t\geq\tau$,
	\begin{align*}
	d(\Phi_1(t)-\Phi_2(t))=[F(\Phi_1(t))-F(\Phi_2(t))]dt+[G(\Phi_1(t))-G(\Phi_2(t))]dW(t).
\end{align*}
Following the It\^{o} formula, we get that for all $t\geq\tau$,
\begin{equation}
	\begin{aligned}
		\label{eq:5.4}
		&\mathbb{E}\left((\lambda+\delta^2-\alpha\delta)\|y(t)\|^2+\|z(t)\|^2+\nu\|By(t)\|^2\right)+2\delta(\lambda+\delta^2-\alpha\delta)\mathbb{E}(\int_{\tau}^{t}\|y(s)\|^2ds)\\&+2\nu\delta\mathbb{E}(\int_{\tau}^{t}\|By(s)\|^2ds)+2(\alpha-\delta)\mathbb{E}(\int_{\tau}^{t}\|z(s)\|^2ds)
		\\=&\mathbb{E}\left((\lambda+\delta^2-\alpha\delta)\|u_{1\tau}-u_{2\tau}\|^2+\|v_{1\tau}-v_{2\tau}\|^2+\nu\|B(u_{1\tau}-u_{2\tau})\|^2\right)\\&+2\mathbb{E}(\int_{\tau}^{t}(f(s,u_1(s),\mu_{1})-f(s,u_2(s),\mu_{2}),z(s))ds)\\&+\mathbb{E}(\int_{\tau}^{t}\|\tilde{\sigma}(s, u_1(s),\mu_{1})-\tilde{\sigma}(s, u_2(s),\mu_{2})\|_{L_2(\ell^2,\ell^2)}^2ds).
	\end{aligned}
\end{equation}	
We estimate each term in the right-hand side of \eqref{eq:5.4}, respectively. For the second term, by \eqref{eq:2.3}, \eqref{eq:2.11} and \eqref{eq:2.8}, we have that
\begin{equation}
	\begin{aligned}
		\label{eq:5.5}
	&2\mathbb{E}(\int_{\tau}^{t}(f(s,u_1(s),\mu_{1})-f(s,u_2(s),\mu_{2}),z(s))ds)\\=&2\mathbb{E}(\int_{\tau}^{t}(f(s,u_1(s),\mu_{1})-f(s,u_2(s),\mu_{1}),z(s))ds)+2\mathbb{E}(\int_{\tau}^{t}(f(s,u_2(s),\mu_{1})-f(s,u_2(s),\mu_{2}),z(s))ds)\\\leq&2\mathbb{E}(\int_{\tau}^{t}\|\eta(s)\|\|y(s)\|\|z(s)\|ds)+\mathbb{E}(\int_{\tau}^{t}(\|z(s)\|^2+\|f(s,u_2(s),\mu_{1})-f(s,u_2(s),\mu_{2})\|^2)ds)\\\leq&2\mathbb{E}(\int_{\tau}^{t}\|\eta(s)\|\|y(s)\|\|z(s)\|ds)+\mathbb{E}(\int_{\tau}^{t}(\|z(s)\|^2+2\|\psi(s)\|^2\mathbb{E}\|y(s)\|^2)ds)\\\leq&(\|\eta\|^2_{L^\infty([\tau,T],\ell^2)}+2\|\psi\|^2_{L^\infty([\tau,T],\ell^2)})\int_{\tau}^{t}\mathbb{E}\|y(s)\|^2ds+2\int_{\tau}^{t}\mathbb{E}\|z(s)\|^2ds.
	\end{aligned}
\end{equation}
For the third term, by using \eqref{eq:2.12} and \eqref{eq:2.8}, we obtain
\begin{equation}
	\begin{aligned}
		\label{eq:5.6}
		\mathbb{E}(\int_{\tau}^{t}\|\tilde{\sigma}(s, u_1(s),\mu_{1})-\tilde{\sigma}(s, u_2(s),\mu_{2})\|_{L_2(\ell^2,\ell^2)}^2ds)\leq4\|\kappa\|^2_{L^\infty(\mathbb{R},\ell^2)}\int_{\tau}^{t}\mathbb{E}\|y(s)\|^2ds.
	\end{aligned}
\end{equation}
It follows from \eqref{eq:5.4}-\eqref{eq:5.6}, we know that for all $t\geq\tau$
\begin{equation*}
	\begin{aligned}
		&\mathbb{E}\left((\lambda+\delta^2-\alpha\delta)\|y(t)\|^2+\|z(t)\|^2+\nu\|By(t)\|^2\right)
		\\\leq&\mathbb{E}\left((\lambda+\delta^2-\alpha\delta)\|u_{1\tau}-u_{2\tau}\|^2+\|v_{1\tau}-v_{2\tau}\|^2+\nu\|B(u_{1\tau}-u_{2\tau})\|^2\right)\\&+(\|\eta\|^2_{L^\infty([\tau,T],\ell^2)}+2\|\psi\|^2_{L^\infty([\tau,T],\ell^2)}+4\|\kappa\|^2_{L^\infty(\mathbb{R},\ell^2)})\int_{\tau}^{t}\mathbb{E}\|y(s)\|^2ds+2\int_{\tau}^{t}\mathbb{E}\|z(s)\|^2ds,
	\end{aligned}
\end{equation*}	
which is
\begin{equation}
	\begin{aligned}
		\label{eq:5.7}
		\mathbb{E}\left(\|\Phi_1(t)-\Phi_2(t)\|^2\right)\leq a_1\mathbb{E}\left(\|\Phi_{1\tau}-\Phi_{2\tau}\|^2\right)+a_2\int_{\tau}^{t}\mathbb{E}\left(\|\Phi_1(s)-\Phi_2(s)\|^2\right)ds,
	\end{aligned}
\end{equation}
where $a_1,a_2>0$ are constants independent of $\tau$ and $t$.\par
Then using Gronwall's Lemma, we get that for all $t\geq\tau$,
\begin{align*}
	\mathbb{E}\left(\|\Phi_1(t)-\Phi_2(t)\|^2\right)\leq a_1\mathbb{E}\left(\|\Phi_{1\tau}-\Phi_{2\tau}\|^2\right)e^{a_2(t-\tau)}.
\end{align*}	
\end{proof}
\par

From Lemma \ref{lem:5.1} we obtain as an immediate corollary the continuity of the mapping $\Theta(t,\tau)$ on bounded subsets of $\mathcal{P}_{4}(\ell^{2}\times\ell^{2})$.
\begin{Corollary}\label{coro:5.1}
	Assume (A1)-(A3) hold. Let $S\subset\mathcal{P}_{4}(\ell^{2}\times\ell^{2})$ be a bounded set, i.e. there exists $R>0$ such that
\begin{align*}
	\int_{\ell^{2}\times\ell^{2}}\|x\|_{\ell^{2}\times\ell^{2}}^{4}\,\mathrm{d}\mu(x) \leq R \quad\text{for all }\mu\in S .
\end{align*}
	Then for every fixed $t\geq 0$ and every $\tau\in\mathbb{R}$, the mapping
\begin{align*}
	\Theta(t,\tau) : (S,d_{\mathcal{P}_{4}(\ell^{2}\times\ell^{2})})\longrightarrow(\mathcal{P}_{4}(\ell^{2}\times\ell^{2}),d_{\mathcal{P}_{4}(\ell^{2}\times\ell^{2})})
\end{align*}
is continuous. In other words, if $\{\mu_{n}\}_{n=1}^{\infty}\subset S$ satisfies
\begin{align*}
	d_{\mathcal{P}_{4}(\ell^{2}\times\ell^{2}}(\mu_{n},\mu)\longrightarrow 0\qquad (n\to\infty),
\end{align*}
	then
\begin{align*}
	d_{\mathcal{P}_{4}(\ell^{2}\times\ell^{2})}\left(\Phi(t,\tau)\mu_{n},	\Phi(t,\tau)\mu\right)\longrightarrow 0	\qquad (n\to\infty).
\end{align*}
\end{Corollary}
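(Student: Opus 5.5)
The plan is to pass through the Wasserstein distance $\mathbb{W}_2$: weak convergence on a $\mathcal{P}_4$-bounded set upgrades to $\mathbb{W}_2$-convergence, Lemma \ref{lem:5.1} transports it along the flow, and $\mathbb{W}_2$-convergence gives back convergence in $d_{\mathcal{P}(\ell^2\times\ell^2)}$. Throughout, $d_{\mathcal{P}_4(\ell^2\times\ell^2)}$ in the statement is read as the bounded-Lipschitz metric $d_{\mathcal{P}(X)}$ of Section 2, with $X=\ell^2\times\ell^2$, restricted to $\mathcal{P}_4$.

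\textbf{Step 1: the limit lies in the bounded set.} Let $\{\mu_n\}\subset S$ with $d_{\mathcal{P}(X)}(\mu_n,\mu)\to 0$, so $\mu_n\to\mu$ weakly. Since $x\mapsto\|x\|^4$ is nonnegative and lower semicontinuous, the portmanteau theorem, applied to the truncations $\|x\|^4\wedge k$ and then letting $k\to\infty$, gives
\begin{align*}
\int\|x\|^4\,\mu(dx)\le\liminf_{n\to\infty}\int\|x\|^4\,\mu_n(dx)\le R .
\end{align*}
Hence $\mu\in\mathcal{P}_4(X)$, which is also the closedness of $B_{\mathcal{P}_4(X)}(r)$ in $d_{\mathcal{P}(X)}$ recalled in Section 2.

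\textbf{Step 2: upgrade to $\mathbb{W}_2$-convergence.} On the set $\{\|x\|>K\}$ we have $\|x\|^2\le\|x\|^4/K^2$, so
\begin{align*}
\sup_n\int_{\{\|x\|>K\}}\|x\|^2\,\mu_n(dx)\le R/K^2\to0 .
\end{align*}
Thus the second moments of $\{\mu_n\}$ are uniformly integrable. Weak convergence together with uniform integrability of the second moments is equivalent to $\mathbb{W}_2(\mu_n,\mu)\to0$ on the Polish space $X$ (the standard characterization, e.g. Villani, Theorem 6.9). I expect this to be the main step, since it is where the $\mathcal{P}_4$-boundedness is genuinely needed: $d_{\mathcal{P}(X)}$-convergence alone does not control second moments, and the flow is only $\mathbb{W}_2$-Lipschitz.

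\textbf{Step 3: apply Lemma \ref{lem:5.1}.} Lemma \ref{lem:5.1} is stated for arbitrary initial laws, but its proof uses $\mathbb{E}\|\Phi_{1\tau}-\Phi_{2\tau}\|^2=\mathbb{W}_2(\mu_1,\mu_2)^2$. So I would first realize an optimal coupling of $(\mu_n,\mu)$ as a pair of $\mathcal{F}_\tau$-measurable initial data. This is possible after enlarging $\mathcal{F}_\tau$ by a random element independent of $W$. By weak uniqueness (Proposition \ref{pro:3.1} and the Yamada--Watanabe remark), the resulting law at time $\tau+t$ is still $p^*_{\tau,\tau+t}$ of the initial law. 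Lemma \ref{lem:5.1} then gives
\begin{align*}
\mathbb{W}_2\big(\Theta(t,\tau)\mu_n,\Theta(t,\tau)\mu\big)\le a_1\,\mathbb{W}_2(\mu_n,\mu)\,e^{a_2t}\longrightarrow 0 .
\end{align*}

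\textbf{Step 4: conclude.} By Remark 3.1, the images have finite fourth moments, so they belong to $\mathcal{P}_4(X)$. Convergence in $\mathbb{W}_2$ implies convergence in $\mathbb{W}_1$, and for every $\varphi$ with $\|\varphi\|_{L_b}\le1$ Kantorovich--Rubinstein duality gives $|(\varphi,\nu_1)-(\varphi,\nu_2)|\le \mathbb{W}_1(\nu_1,\nu_2)$. Hence
\begin{align*}
d_{\mathcal{P}(X)}\big(\Theta(t,\tau)\mu_n,\Theta(t,\tau)\mu\big)\le\mathbb{W}_2\big(\Theta(t,\tau)\mu_n,\Theta(t,\tau)\mu\big)\to0,
\end{align*}
which is the asserted continuity of $\Theta(t,\tau)$ on $(S,d_{\mathcal{P}_4(\ell^2\times\ell^2)})$.
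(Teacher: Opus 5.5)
Your proposal is correct and follows essentially the same route as the paper: the uniform fourth-moment bound upgrades weak convergence to $\mathbb{W}_2$-convergence, and Lemma \ref{lem:5.1} then transports this to the images. The paper does the upgrade by hand (Skorokhod representation plus Vitali) where you cite the uniform-integrability characterization of $\mathbb{W}_2$-convergence, and your ordering and explicit final bound $d_{\mathcal{P}(X)}\le\mathbb{W}_1\le\mathbb{W}_2$ are cleaner than the paper's proof, which assumes $\mathbb{W}_2(\mu_n,\mu)\to0$ before proving it and leaves the return to $d_{\mathcal{P}(X)}$ implicit.
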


\begin{proof}
Let $\{\mu_{n}\}_{n=1}^{\infty}\subset S$ be a sequence converging to $\mu\in S$ in $(\mathcal{P}_{4}(\ell^{2}\times\ell^{2}),d_{\mathcal{P}_{4}(\ell^{2}\times\ell^{2})})$. By Lemma \ref{lem:5.1}, for every $t\in\mathbb{R}^{+}$ and $\tau\in\mathbb{R}$,
\begin{equation}\label{eq:5.8}
	\mathbb{W}_{2}\left(p^{*}_{\tau,\tau+t}\mu_{n},	p^{*}_{\tau,\tau+t}\mu\right)\leq a_{1}\mathbb{W}_{2}(\mu_{n},\mu)\,e^{a_{2}t}.
\end{equation}
Since $\mathbb{W}_{2}(\mu_{n},\mu)\to 0$ as $n\to\infty$, inequality \eqref{eq:5.8} implies that
\begin{equation}\label{eq:5.9}
	\mathbb{W}_{2}\left(p^{*}_{\tau,\tau+t}\mu_{n},	p^{*}_{\tau,\tau+t}\mu\right)\longrightarrow 0,	\qquad n\to\infty.
\end{equation}

On the other hand, since $\mathbb{W}_{2}(\mu_{n},\mu)\to 0$, the sequence $\{\mu_{n}\}$ converges to $\mu$ weakly; hence by Skorokhod's representation theorem there exists a probability space $(\tilde\Omega,\tilde{\mathcal{F}},\tilde{\mathbb{P}})$ and $\ell^{2}\times\ell^{2}$-valued random variables
$\xi_{0},\xi_{n}\in L^{2}(\tilde\Omega,\tilde{\mathcal{F}},\tilde{\mathbb{P}};\ell^{2}\times\ell^{2})$ such that $\mathcal{L}_{\xi_{0}}=\mu$, $\mathcal{L}_{\xi_{n}}=\mu_{n}$ and
\begin{equation}\label{eq:5.10}
	\lim_{n\to\infty}\|\xi_{n}-\xi_{0}\|_{\ell^{2}\times\ell^{2}}=0,
	\qquad \tilde{\mathbb{P}}\text{-a.s.}.
\end{equation}
Because $\mu_{n}\in S$ for all $n$, there exists $R>0$ such that
\begin{align*}
\mathbb{E}^{\tilde{\mathbb{P}}}\|\xi_{n}\|_{\ell^{2}\times\ell^{2}}^{4}
= \int_{\ell^{2}\times\ell^{2}}\|x\|_{\ell^{2}\times\ell^{2}}^{4}\,\mathrm{d}\mu_{n}(x) \le R,
\qquad n\in\mathbb{N}^{+}.
\end{align*}
Thus $\{\xi_{n}\}_{n=1}^{\infty}$ is uniformly integrable in $L^{2}(\tilde\Omega,\ell^{2}\times\ell^{2})$.  Combining this with \eqref{eq:5.10} and applying Vitali's convergence theorem, we obtain 
\begin{align*}
\lim_{n\to\infty}
\mathbb{E}^{\tilde{\mathbb{P}}}\|\xi_{n}-\xi_{0}\|_{\ell^{2}\times\ell^{2}}^{2}=0.
\end{align*}
Consequently,
\begin{align*}
\mathbb{W}_{2}(\mu_{n},\mu)\leq \left(\mathbb{E}^{\tilde{\mathbb{P}}}\|\xi_{n}-\xi_{0}\|_{\ell^{2}\times\ell^{2}}^{2}\right)^{\frac{1}{2}}\longrightarrow 0,\qquad n\to\infty,
\end{align*}
which together with \eqref{eq:5.9} completes the proof.
\end{proof}
\par 
It follows from Lemma \ref{lem:5.1} that the mapping $\Theta$ induces a non-autonomous dynamical system on the space $(\mathcal{P}_4(\ell^{2}\times\ell^{2}), d_{\mathcal{P}(\ell^{2}\times\ell^{2})})$, which remains continuous on every bounded subset of $\mathcal{P}_4(\ell^{2}\times\ell^{2})$. Our next objective is to establish the existence of $\mathcal{D}$-pullback absorbing sets for $\Theta$.

\begin{lem}
	\label{lem:5.2}
		Suppose (A1)-(A3), \eqref{eq:2.21}--\eqref{eq:2.26} hold. Given $\tau\in\mathbb{R}$, denote by
	\begin{align}\label{eq:5.11}
		K(\tau)=\left\{\mu\in\mathcal{P}_4(\ell^2\times\ell^2):\int_{\ell^{2}\times\ell^{2}}\|x\|_{\ell^{2}\times\ell^{2}}^{4}\,\mathrm{d}\mu(x)\leq L_4(\tau) \right\},
	\end{align}
	where $L_4(\tau)=L_3+L_3\int_{-\infty}^{\tau}e^{2\beta(s-\tau)}\|g(s)\|^4ds$, and $L_3$ is the same constant as in Lemma \ref{lem:4.4}, independent of $\tau$. Then $K=\{K(\tau):\tau\in\mathbb{R}\}\in\mathcal{D}$ is a closed $\mathcal{D}$-pullback absorbing set of $\Theta$.	
\end{lem}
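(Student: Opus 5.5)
The plan has three parts: absorption, membership in $\mathcal{D}$, and closedness. Absorption comes essentially for free from Lemma \ref{lem:4.4}; the other two parts are soft arguments.

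\textbf{Absorption.} Fix $\tau\in\mathbb{R}$ and $D_2\in\mathcal{D}$. Let $T=T(\tau,D_2)$ be given by Lemma \ref{lem:4.4}. Take $t\geq T$ and $\mu\in D_2(\tau-t)$, and choose an $\mathcal{F}_{\tau-t}$-measurable $\Phi_{\tau-t}$ with $\mathcal{L}_{\Phi_{\tau-t}}=\mu$; this is possible since the filtration is rich enough, or after enlarging the probability space and using weak uniqueness. Then $\Theta(t,\tau-t)\mu=\mathcal{L}_{\Phi(\tau,\tau-t,\Phi_{\tau-t})}$. Since $\|x\|^4_{\ell^2\times\ell^2}=(\|u\|^2+\|v\|^2)^2\leq 2(\|u\|^4+\|v\|^4)$, estimate \eqref{eq:4.23} gives
\begin{align*}
\int\|x\|^4\,d\Theta(t,\tau-t)\mu\leq 2L_3+2L_3\int_{-\infty}^{\tau}e^{2\beta(s-\tau)}\|g(s)\|^4ds.
\end{align*}
This lands in $K(\tau)$ once $L_3$ is enlarged by the factor $2$. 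That rescaling is harmless, since $L_3$ is only some constant independent of $\tau$ and $D_2$. So $\Theta(t,\tau-t)D_2(\tau-t)\subseteq K(\tau)$ for all $t\geq T$.

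\textbf{Membership $K\in\mathcal{D}$.} Each $K(\tau)$ is nonempty, since it contains $\delta_0$, and it is bounded in $\mathcal{P}_4$ because $L_4(\tau)<\infty$ by \eqref{eq:2.26}. For the tempered condition we need $e^{2\beta\tau}\|K(\tau)\|^4_{\mathcal{P}_4}\leq e^{2\beta\tau}L_4(\tau)\to0$ as $\tau\to-\infty$. We have
\begin{align*}
e^{2\beta\tau}L_4(\tau)=L_3e^{2\beta\tau}+L_3\int_{-\infty}^{\tau}e^{2\beta s}\|g(s)\|^4ds.
\end{align*}
The first term clearly tends to $0$. The second term is the tail of the convergent integral $\int_{-\infty}^{0}e^{2\beta s}\|g(s)\|^4ds$, which is finite by \eqref{eq:2.26} at $\tau=0$, so it also tends to $0$.

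\textbf{Closedness.} This is the only step needing care; I would argue it directly with the metric $d_{\mathcal{P}(X)}$, $X=\ell^2\times\ell^2$. Let $\mu_n\in K(\tau)$ with $\mu_n\to\mu$, so $\mu_n\to\mu$ weakly. For each $M>0$, the function $x\mapsto\min(\|x\|^4,M)$ is bounded and continuous, so
\begin{align*}
\int\min(\|x\|^4,M)\,d\mu=\lim_n\int\min(\|x\|^4,M)\,d\mu_n\leq L_4(\tau).
\end{align*}
Letting $M\to\infty$ and using monotone convergence gives $\int\|x\|^4d\mu\leq L_4(\tau)$. Hence $\mu\in\mathcal{P}_4$ and $\mu\in K(\tau)$, so $K(\tau)$ is closed in $(\mathcal{P}_4(X),d_{\mathcal{P}(X)})$. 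This is the lower semicontinuity of the fourth moment under weak convergence, consistent with the closedness remark in Section 2.
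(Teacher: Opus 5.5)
Your proof is correct and follows the same route as the paper: absorption is read off from Lemma \ref{lem:4.4}, and the temperedness $e^{2\beta\tau}L_4(\tau)\to 0$ as $\tau\to-\infty$ follows from \eqref{eq:2.26}. Your lower-semicontinuity argument for closedness and your factor-$2$ adjustment (coming from $\|x\|^4_{\ell^2\times\ell^2}\le 2(\|u\|^4+\|v\|^4)$) supply details the paper's proof passes over silently; the adjustment is legitimate because Lemma \ref{lem:4.4} stays true with $L_3$ replaced by $2L_3$.
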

\begin{proof}
	By \eqref{eq:5.2} and Lemma \ref{lem:4.4}, we get that for each $\tau\in\mathbb{R}$
	and $D_2\in\mathcal{D}$, there exists $T = T (\tau, D_2) > 0$, such that for all $t\geq T$, it holds
	\begin{align}\label{eq:5.12}
		\Theta(t,\tau-t)D_2(\tau-t)\subseteq K(\tau).
	\end{align}
	Then by \eqref{eq:5.11} and \eqref{eq:2.26}, we see
	\begin{equation*}
		\begin{aligned}
			\lim_{\tau\rightarrow -\infty}e^{2\beta\tau}\|K(\tau)\|^4_{\mathcal{P}_4(\ell^2\times\ell^2)}=e^{2\beta\tau}L_3+L_3\int_{-\infty}^{\tau}e^{2\beta s}\|g(s)\|^4ds=0.
		\end{aligned}
	\end{equation*}
	which shows $K=\{K(\tau):\tau\in\mathbb{R}\}\in\mathcal{D}$. Combining this with \eqref{eq:5.12}, we infer that the set $K$ defined in \eqref{eq:5.11} indeed forms a closed $\mathcal{D}$-pullback absorbing set for $\Theta$.
\end{proof}
\par 

\begin{lem}
	\label{lem:5.3}
	Suppose (A1)-(A3), \eqref{eq:2.21}--\eqref{eq:2.26} hold. Given $\tau\in\mathbb{R}$, then $\Theta$ is $\mathcal{D}$-pullback asymptotically compact in $(\mathcal{P}_4(\ell^{2}\times\ell^{2}), d_{\mathcal{P}(\ell^{2}\times\ell^{2})})$.	
\end{lem}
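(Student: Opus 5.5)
To prove Lemma \ref{lem:5.3}, fix $\tau\in\mathbb{R}$, $D_2\in\mathcal{D}$, a sequence $t_n\to+\infty$ and $\mu_n\in D_2(\tau-t_n)$. The goal is to show that $\{\Theta(t_n,\tau-t_n)\mu_n\}_{n}=\{\mathcal{L}_{\Phi(\tau,\tau-t_n,\Phi_{\tau-t_n})}\}_n$ is precompact in $(\mathcal{P}(\ell^2\times\ell^2),d_{\mathcal{P}(\ell^2\times\ell^2)})$, with limit in $\mathcal{P}_4$. By Prokhorov's theorem it suffices to prove tightness: for every $\eta>0$ we need a compact set $\mathcal{K}_\eta\subset\ell^2\times\ell^2$ with $\mathcal{L}_{\Phi(\tau,\tau-t_n,\Phi_{\tau-t_n})}(\mathcal{K}_\eta)\geq 1-\eta$ for all large $n$. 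Since $\mathcal{D}\subseteq\mathcal{D}_0$, both the $\ell^2$ estimates of Lemmas \ref{lem:4.1} and \ref{lem:4.3} and the fourth-moment estimate of Lemma \ref{lem:4.4} apply along the sequence once $t_n\geq T$.

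\textbf{Step 1: uniform boundedness.} By Lemma \ref{lem:4.1}, $\mathbb{E}\|\Phi(\tau,\tau-t_n,\Phi_{\tau-t_n})\|^2\leq C(\tau)$ for all large $n$. Chebyshev's inequality then gives $R_\eta$ with $\mathbb{P}(\|\Phi\|>R_\eta)\leq\eta/2$ uniformly in $n$.

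\textbf{Step 2: uniform tails.} For each $m\in\mathbb{N}$, apply Lemma \ref{lem:4.3} with $\varepsilon=\eta 4^{-m}$ to obtain $N_m$ such that
\begin{align*}
\mathbb{E}\sum_{|i|\geq N_m}\big(|u_i(\tau,\tau-t_n,u_{\tau-t_n})|^2+|v_i(\tau,\tau-t_n,v_{\tau-t_n})|^2\big)\leq \eta 4^{-m}
\end{align*}
for all large $n$. By Chebyshev, $\mathbb{P}\big(\sum_{|i|\geq N_m}(|u_i|^2+|v_i|^2)>2^{-m}\big)\leq \eta 2^{-m}$, and summing over $m$ gives total probability at most $\eta$ (rescale $\eta$ as needed).

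\textbf{Step 3: compact set.} Define
\begin{align*}
\mathcal{K}_\eta=\Big\{(u,v):\|(u,v)\|\leq R_\eta,\ \sum_{|i|\geq N_m}(|u_i|^2+|v_i|^2)\leq 2^{-m}\ \forall m\Big\}.
\end{align*}
This set is closed, bounded, and has uniformly small tails. It is therefore totally bounded in $\ell^2\times\ell^2$, since it is approximated by bounded sets in the finite-dimensional spaces of indices $|i|<N_m$. Hence $\mathcal{K}_\eta$ is compact, and tightness follows. The finitely many $n$ with $t_n<T$ do not affect the argument.

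\textbf{Step 4: limit lies in $\mathcal{P}_4$.} Prokhorov gives a subsequence converging weakly, equivalently in $d_{\mathcal{P}(\ell^2\times\ell^2)}$, to some $\mu$. By Lemma \ref{lem:4.4}, the fourth moments of the subsequence are bounded by $L_4(\tau)$. Since $x\mapsto\|x\|^4\wedge k$ is bounded and continuous, weak convergence followed by monotone convergence in $k$ gives $\int\|x\|^4\,d\mu\leq L_4(\tau)$. Thus $\mu\in K(\tau)\subset\mathcal{P}_4(\ell^2\times\ell^2)$, which is the closedness noted in Section 2.

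The main obstacle is Step 2: we need the tail bound of Lemma \ref{lem:4.3} to hold uniformly along the pullback sequence, with $T$ independent of $n$. The lemma provides exactly this, since $T$ depends only on $(\varepsilon,\tau,D_1)$ and $N$ depends only on $(\varepsilon,\tau)$. The remaining work is to check that taking countably many $\varepsilon_m$ still gives a uniform threshold in $n$. For fixed $\eta$ this is fine if we use only finitely many $m$ at a time and then pass to a limit in the diagonal sense. Alternatively, we can use the standard criterion that a family of measures is tight if, for each $m$, all but mass $\eta 2^{-m}$ lies within distance $2^{-m/2}$ of a finite-dimensional bounded set.
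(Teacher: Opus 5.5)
Your proof is correct and uses the same estimates as the paper: uniform second moments, the tail estimate of Lemma~\ref{lem:4.3}, compactness of bounded finite-dimensional sets, and closedness of $K(\tau)$. The difference is in how you assemble them.

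You build a single compact set $\mathcal{K}_\eta\subset\ell^2\times\ell^2$ with equismall tails and invoke Prokhorov. The paper never constructs such a set. It splits $\Phi(\tau)=\rho_m\Phi(\tau)+(1-\rho_m)\Phi(\tau)$ and notes that the laws of the finite-dimensional parts $(1-\rho_{m_0})\Phi(\tau)$ are tight. It then extracts a finite $\varepsilon/3$-net for these laws in $d_{\mathcal P(\ell^2\times\ell^2)}$. Bounded Lipschitz test functions together with $\mathbb{E}\|\rho_{m_0}\Phi(\tau)\|<\varepsilon/3$ transfer this net to the full laws. Total boundedness in the complete space $(\mathcal P(\ell^2\times\ell^2),d_{\mathcal P})$ then gives relative compactness.

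The paper's route works with one $\varepsilon$ at a time, so the dependence of the pullback time $T(\varepsilon,\tau,D)$ on $\varepsilon$ never matters. Your route is more transparent but has to handle countably many thresholds $T_m$ at once. That is exactly the point you flag, and it closes without any diagonal argument. For each $m$, only finitely many $n$ have $t_n<T_m$. For each such $n$, $\mathbb{E}\sum_{|i|\ge N}(|u_i|^2+|v_i|^2)\to0$ as $N\to\infty$ by dominated convergence, since $\mathbb{E}\|\Phi(\tau,\tau-t_n,\Phi_{\tau-t_n})\|^2<\infty$. Enlarge $N_m$ (and likewise $R_\eta$) to cover these $n$. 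Then Steps 1--2 hold for every $n$, and the countable intersection in Step 3 goes through verbatim.

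Your write-up also has two small advantages over the paper's:
\begin{itemize}
\item You invoke Lemma~\ref{lem:4.1}, which is what actually gives a bound uniform in $n$. The paper cites Lemma~\ref{lem:4.2}, whose bound involves $\mathbb{E}\|\Phi_{\tau-t_n}\|^2$, and this need not be bounded uniformly in $n$ for $D\in\mathcal D$.
\item Your truncation by $\|x\|^4\wedge k$ makes explicit why $K(\tau)$ is closed under weak convergence, which the paper only asserts.
\end{itemize}
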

\begin{proof}
Let $\tau\in\mathbb{R}$, $D\in\mathcal{D}$, $t_{n}\to\infty$ and $\mu_{n}\in D(\tau-t_{n})$. We prove that $\{\Theta(t_{n},\tau-t_{n})\mu_{n}\}_{n=1}^{\infty}$ has a convergent subsequence.

Without loss of generality, let $\Phi_{n}\in L^{4}(\Omega,\ell^{2}\times\ell^{2})$ with $\mathcal{L}_{\Phi_{n}}=\mu_{n}$. Denote by $\Phi(\tau,\tau-t_{n},\Phi_{n})=\Phi(\tau)=\binom{u(\tau)}{v(\tau)}$ the solution of \eqref{eq:2.16} with initial data $\Phi_{n}$ at time $\tau-t_{n}$. We first show that the distributions of $\{\Phi(\tau)\}_{n=1}^{\infty}$ are tight in $\ell^{2}\times\ell^{2}$.	

Let $\rho:\mathbb{Z}\to[0,1]$ be a smooth cut-off function such that $\rho(i)=1$ for $|i|\geq 2$ and $\rho(i)=0$ for $|i|\leq 1$. For $m\in\mathbb{N}^{+}$ define  $\theta_{m}(i)=\theta(\frac{i}{m})$, $i\in\mathbb{Z}$. Then decompose the solution as
\begin{equation}\label{eq:5.13}
\Phi(\tau)= \rho_{m}\,\Phi(\tau)+ (1-\rho_{m})\,\Phi(\tau).
	\end{equation}
By Lemma~\ref*{lem:4.2}, there exists $N_1=N_1(\tau,D)\in\mathbb N^+$ such that for all
$m\in\mathbb N^+$ and all $n\ge N_1$,
\begin{equation}\label{eq:5.14}
\mathbb E\|(1-\rho_m)\Phi(\tau)\|_{\ell^{2}\times\ell^{2}}^2
=\mathbb E\sum_{i\in\mathbb Z}(1-\rho_m(i))^2|(u_i(\tau)|^2+|v_i(\tau)|^2)\le C_1,
	\end{equation}
where $C_1=C_1(\tau)$ is independent of $n$ and $m$. By Chebyshev's inequality, for all $m\in\mathbb N^+$, $n\ge N_1$ and $R>0$,
\begin{equation}\label{eq:5.15}
\mathbb P\Bigl(\|(1-\rho_m)(u(\tau),v(\tau))\|_{\ell^{2}\times\ell^{2}}^2>R\Bigr)\le\frac{C_1}{R}.
	\end{equation}
Hence for any $\epsilon>0$, there exists $R(\epsilon,\tau)>0$ such that for all $m\in\mathbb N^+$ and $n\ge N_1$,
\begin{equation}\label{eq:5.16}
\mathbb P\Bigl(\|(1-\rho_m)(u(\tau),v(\tau))\|_{\ell^{2}\times\ell^{2}}>R(\epsilon,\tau)\Bigr)<\epsilon.
\end{equation}
For any fixed $m\in\mathbb N^+$, consider the closed subset of $\ell^{2}\times\ell^{2}$:
\begin{equation}\label{eq:5.17}
\mathcal Z_\epsilon:=\Bigl\{(w^{(1)},w^{(2)})\in\ell^{2}\times\ell^{2}:\|(w^{(1)},w^{(2)})\|_{\ell^{2}\times\ell^{2}}\le R(\epsilon,\tau),\ w^{(1)}_i=w^{(2)}_i=0\ \text{for}\ |i|>2m\Bigr\}.
\end{equation}
We claim $\mathcal Z_\epsilon$ is compact in $\ell^{2}\times\ell^{2}$. Indeed, restricting $w=(w^{(1)},w^{(2)})$ to $\{|i|\le 2m\}$, we obtain $w|_{\{|i|\le 2m\}}\in\widehat{\mathcal Z}_\epsilon$, where
\begin{equation*}
\widehat{\mathcal Z}_\epsilon
:=\Bigl\{\bigl(w^{(1)}_i,w^{(2)}_i\bigr)_{|i|\le 2m}\in\mathbb R^{2(2m+1)}:
\sum_{|i|\le 2m}|w^{(1)}_i|^2+|w^{(2)}_i|^2\le R^2(\epsilon,\tau)\Bigr\},
\end{equation*}
and $\widehat{\mathcal Z}_\epsilon$ is bounded, hence relatively compact in $\mathbb R^{2(2m+1)}$.
By the natural identification $\mathcal Z_\epsilon\cong\widehat{\mathcal Z}_\epsilon$,
$\mathcal Z_\epsilon$ is compact in $\ell^{2}\times\ell^{2}$. From \eqref{eq:5.16} we obtain, for all $m\in\mathbb N^+$ and $n\ge N_1$,
\begin{equation}\label{eq:5.18}
\mathbb P\Bigl((1-\rho_m)\Phi(\tau)\in\mathcal Z_\epsilon\Bigr)>1-\epsilon.
\end{equation}
Since $\epsilon>0$ is arbitrary, \eqref{eq:5.18} implies that for each $m\in\mathbb N^+$,
\begin{equation}\label{eq:5.19}
\Bigl\{\mathcal L_{(1-\rho_m)\Phi(\tau,\tau-t_{n},\Phi_{n})}\Bigr\}_{n=N_1}^\infty
\ \text{is tight in}\ \mathcal P(\ell^{2}\times\ell^{2}).
\end{equation}	

By Lemma~\ref*{lem:4.3} (uniform tail estimate), for every $\varepsilon>0$ there exist
$N_2=N_2(\varepsilon,\tau,D)\in\mathbb N^+$ and $m_0=m_0(\varepsilon,\tau,D)\in\mathbb N^+$ such that for all $n\ge N_2$,
\begin{equation}\label{eq:5.20}
\mathbb E\|\rho_{m_0}(u(\tau),v(\tau))\|_{\ell^{2}\times\ell^{2}}^2
=\mathbb E\sum_{|i|\ge 2m_0}|u_i(\tau)|^2+|v_i(\tau)|^2
<\frac{\varepsilon^2}{9}.
\end{equation}	
From \eqref{eq:5.19}, the sequence $\bigl\{\mathcal L_{(1-\rho_{m_0})(u_n(\tau),v_n(\tau))}\bigr\}_{n=N_2}^\infty$ is tight in $\mathcal P(\ell^{2}\times\ell^{2})$, hence relatively compact by Prokhorov.
Therefore there exist $n_1,\ldots,n_l\ge N_2$ such that
\begin{equation}\label{eq:5.21}
\bigcup_{n=N_2}^\infty
\mathcal L_{(1-\rho_{m_0})(u_n(\tau),v_n(\tau))}
\subseteq
\bigcup_{j=1}^l
\mathcal B\!\Bigl(\mathcal L_{(1-\rho_{m_0})(u_{n_j}(\tau),v_{n_j}(\tau))},\frac{\varepsilon}{3}\Bigr).
\end{equation}
Consequently, for any $n\ge N_2$, there exists $j_0\in\{1,\ldots,l\}$ such that
\begin{equation}\label{eq:5.22}
d_{\mathcal P(\ell^{2}\times\ell^{2}}
\Bigl(\mathcal L_{(1-\rho_{m_0})(u_n(\tau),v_n(\tau))},
\mathcal L_{(1-\rho_{m_0})(u_{n_{j_0}}(\tau),v_{n_{j_0}}(\tau))}\Bigr)
<\frac{\varepsilon}{3}.
\end{equation}

Now use the definition of the Wasserstein distance and bounded Lipschitz functions. For any $\psi\in\mathrm{Lip}_b(\ell^{2}\times\ell^{2})$ with $\|\psi\|_{\mathrm{Lip}}\le1$, we have
\begin{align*}
	&d_{\mathcal P(\ell^{2}\times\ell^{2})}
	\Bigl(\mathcal L_{(u(\tau),v(\tau))},
	\mathcal L_{(u_{n_{j_0}}(\tau),v_{n_{j_0}}(\tau))}\Bigr) \\[4pt]
	&\ \le
	\mathbb E\Bigl|\psi(u(\tau),v(\tau))
	-\psi\bigl((1-\rho_{m_0})(u(\tau),v(\tau))\bigr)\Bigr|
	\\[4pt]
	&\ \ +\mathbb E\Bigl|\psi\bigl((1-\rho_{m_0})(u(\tau),v(\tau))\bigr)
	-\psi\bigl((1-\rho_{m_0})(u_{n_{j_0}}(\tau),v_{n_{j_0}}(\tau))\bigr)\Bigr|
	\\[4pt]
	&\ \ +\mathbb E\Bigl|\psi\bigl((1-\rho_{m_0})(u_{n_{j_0}}(\tau),v_{n_{j_0}}(\tau))\bigr)
	-\psi(u_{n_{j_0}}(\tau),v_{n_{j_0}}(\tau))\bigr)\Bigr| \\[4pt]
	&\ \le
	\mathbb E\|\rho_{m_0}(u(\tau),v(\tau))\|_{\ell^{2}\times\ell^{2}}
	+\mathbb E\|\rho_{m_0}(u_{n_{j_0}}(\tau),v_{n_{j_0}}(\tau))\|_{\ell^{2}\times\ell^{2})}
	\\[4pt]
	&\ \ +d_{\mathcal P(\ell^{2}\times\ell^{2})}
	\Bigl(\mathcal L_{(1-\rho_{m_0})(u(\tau),v(\tau))},
	\mathcal L_{(1-\rho_{m_0})(u_{n_{j_0}}(\tau),v_{n_{j_0}}(\tau))}\Bigr) \\[4pt]
	&\ \ <\frac{\varepsilon}{3}+\frac{\varepsilon}{3}+\frac{\varepsilon}{3}
	=\varepsilon,
\end{align*}
where we used $|\psi(a)-\psi(b)|\le\|a-b\|_{\ell^{2}\times\ell^{2}}$ (since $\|\psi\|_{\mathrm{Lip}}\le1$)
and \eqref{eq:5.20}, \eqref{eq:5.22}.
Since $\varepsilon>0$ is arbitrary, this shows
\begin{equation}\label{eq:5.23}
\Bigl\{\mathcal L_{(u_n(\tau),v_n(\tau))}\Bigr\}_{n=1}^\infty \ \text{is tight in}\ \mathcal P(\ell^{2}\times\ell^{2}).
\end{equation}
Hence there exists $\varpi\in\mathcal P(\ell^{2}\times\ell^{2})$ such that, up to a subsequence,
\begin{equation}\label{eq:5.24}
\mathcal L_{\Phi(\tau,\tau-t_{n},\Phi_{n})}\rightharpoonup\varpi\qquad(n\to\infty).
\end{equation}

It remains to show that $\varpi\in\mathcal P_4(\ell^{2}\times\ell^{2})$. Let  $K=\{K(\tau):\tau\in\mathbb R\}$ be the closed $\mathcal D$-pullback absorbing set
given by Lemma~\ref*{lem:5.2}. Since $D\in\mathcal D$, there exists $N_3=N_3(\tau,D)\in\mathbb N^+$ such that for all $n\ge N_3$,
\begin{equation}\label{eq:5.25}
\mathcal L_{\Phi(\tau)}\in K(\tau).
\end{equation}
Because $K(\tau)$ is closed in $\mathcal P_4(\ell^{2}\times\ell^{2})$ and the convergence in \eqref{eq:5.24} is weak convergence in $\mathcal P(\ell^{2}\times\ell^{2})$, \eqref{eq:5.25} implies
$\varpi\in K(\tau)\subset\mathcal P_4(\ell^{2}\times\ell^{2})$. Thus $\varpi\in\mathcal P_4(\ell^{2}\times\ell^{2})$, which completes the proof.
\end{proof}
 
 In summary, we have proven the existence and uniqueness of $D$-pullback measure attractors for $\Theta$.
 
\begin{thm}
	\label{th5.4} Assume that (A1)-(A3), \eqref{eq:2.21}--\eqref{eq:2.26} hold. Then, $\Theta$ has a unique $D$-pullback measure attractor $\mathcal{A} = \{ \mathcal{A}(\tau) : \tau \in \mathbb{R} \} \in \mathcal{D}$.
\end{thm}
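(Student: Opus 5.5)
The plan is to apply the abstract criterion of Proposition \ref{pro:2.1} with $X=\ell^2\times\ell^2$, $p=4$, and the collection $\mathcal{D}$ defined in Section 2. That criterion asks for four things: that $\mathcal{D}$ is inclusion-closed, that $\Theta$ is a continuous non-autonomous dynamical system on $(\mathcal{P}_4(\ell^2\times\ell^2),d_{\mathcal{P}(\ell^2\times\ell^2)})$ (continuous on bounded subsets), that $\Theta$ has a closed $\mathcal{D}$-pullback absorbing set $K\in\mathcal{D}$, and that $\Theta$ is $\mathcal{D}$-pullback asymptotically compact. Once these hold, the existence and uniqueness of $\mathcal{A}\in\mathcal{D}$ follow at once.

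First, inclusion-closedness. If $D\in\mathcal{D}$ and $\emptyset\neq\tilde D(\tau)\subseteq D(\tau)$, then $\|\tilde D(\tau)\|_{\mathcal{P}_4}\le\|D(\tau)\|_{\mathcal{P}_4}$. Hence $e^{2\beta\tau}\|\tilde D(\tau)\|^4_{\mathcal{P}_4}\to0$ as $\tau\to-\infty$, and $\tilde D\in\mathcal{D}$.

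Second, the dynamical-system structure. Property (i) of Definition \ref{Def:2.1} holds because $\Theta(0,\tau)=p^*_{\tau,\tau}=I$. Property (ii) is the cocycle identity \eqref{eq:5.3}, which comes from pathwise uniqueness in Proposition \ref{pro:3.1}. I will also check that $\Theta(t,\tau)$ maps $\mathcal{P}_4$ into $\mathcal{P}_4$. This uses Remark 3.1, or directly the fourth-moment Itô computation of Lemma \ref{lem:4.4} run on a finite interval $[\tau,\tau+t]$ with Gronwall's lemma. Continuity on bounded subsets of $\mathcal{P}_4$ with respect to $d_{\mathcal{P}}$ is Corollary \ref{coro:5.1}, built on Lemma \ref{lem:5.1}.

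Third, compactness and absorption. Lemma \ref{lem:5.2} supplies the closed absorbing set $K\in\mathcal{D}$, and Lemma \ref{lem:5.3} supplies $\mathcal{D}$-pullback asymptotic compactness. Proposition \ref{pro:2.1} then gives the unique attractor, and I would also record the standard representation $\mathcal{A}(\tau)=\omega(K,\tau)$.

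The main obstacle is the continuity step, because of how it is measured. Corollary \ref{coro:5.1} passes from $d_{\mathcal{P}}$-convergence to $\mathbb{W}_2$-convergence via Vitali's theorem. This works because the fourth moments are uniformly bounded, which makes $\|\xi_n\|^2$ uniformly integrable. Lemma \ref{lem:5.1} then gives $\mathbb{W}_2$-convergence of the images, and $\mathbb{W}_2$-convergence implies $d_{\mathcal{P}}$-convergence. I will make sure this chain is stated with $d_{\mathcal{P}(\ell^2\times\ell^2)}$ on the target space, since that is the metric Proposition \ref{pro:2.1} uses.

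A second point to check is that Lemma \ref{lem:5.3} produces a limit lying in $\mathcal{P}_4$. It does, because $K(\tau)$ is $d_{\mathcal{P}}$-closed: $\mu\mapsto\int\|x\|^4d\mu$ is weakly lower semicontinuous. With these points verified, the theorem follows.
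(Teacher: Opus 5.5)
Your proposal is correct and follows the paper's route: the paper's proof of Theorem~\ref{th5.4} is a one-line appeal to Proposition~\ref{pro:2.1}, using Lemma~\ref{lem:5.1} (via Corollary~\ref{coro:5.1}) for continuity on bounded subsets of $\mathcal{P}_4(\ell^2\times\ell^2)$, Lemma~\ref{lem:5.2} for the closed absorbing set $K\in\mathcal{D}$, and Lemma~\ref{lem:5.3} for $\mathcal{D}$-pullback asymptotic compactness. Your additional checks simply make explicit what the paper leaves implicit: inclusion-closedness of $\mathcal{D}$, that $\Theta$ preserves $\mathcal{P}_4$, the correct ordering of the Vitali/$\mathbb{W}_2$ argument in the continuity step, and closedness of $K(\tau)$ via weak lower semicontinuity of $\mu\mapsto\int\|x\|^4\,d\mu$.
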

 \begin{proof} The existence and uniqueness of the measure attractor $\mathcal{A}$ follow from Proposition 2.5 based on Lemmas 5.1, 5.2 and 5.3. 
\end{proof}

\section{Upper semicontinuity of pullback measure attractors}
\indent
\par

In this section, we study the upper semicontinuity of pullback measure attractors
for the non-autonomous distribution-dependent stochastic lattice wave equation \eqref{eq:1.2}
with respect to the M-V perturbation parameter $\varepsilon\to0^{+}$.

To this end, we consider the parameter family equation \eqref{eq:1.2} and the limit equation \eqref{eq:1.3}. Unless otherwise specified, assume that all $f^{\varepsilon}=(f_i^{\varepsilon})_{i\in\mathbb Z}$
and $\sigma^{\varepsilon}=(\sigma_i^{\varepsilon})_{i\in\mathbb Z}$
satisfy conditions {\rm (A1)}--{\rm (A3)} uniformly with respect to $\varepsilon\in(0,1)$,
and that $f=(f_i)_{i\in\mathbb Z}$ and $\sigma=(\sigma_i)_{i\in\mathbb Z}$
satisfy the corresponding conditions without the $\mathcal{L}_{u_i}$ dependence.
Furthermore, assume that \eqref{eq:2.21}--\eqref{eq:2.26} hold.

In addition, we assume that for all $i\in\mathbb Z$, $\varepsilon\in(0,1)$, $t\in\mathbb R$,
$u\in\mathbb R$ and $\mu\in\mathcal P_2(\mathbb R)$,

\begin{equation}\label{eq:6.1}
	|f_i^{\varepsilon}(t,u,\mu)-f_i(t,u)|
	\le\varepsilon\,\tilde\varphi_i(t)\bigl(|u|+\mathbb{W}^{\mathbb{R}}_2(\mu,\delta_0)\bigr),
\end{equation}
\begin{equation}\label{eq:6.2}
	|\sigma_i^{\varepsilon}(t,u,\mu)-\sigma_i(t,u)|
	\le\varepsilon\,\tilde\psi_i(t)\bigl(|u|+\mathbb{W}^{\mathbb{R}}_2(\mu,\delta_0)\bigr),
\end{equation}
where $\tilde\varphi=(\tilde\varphi_i)_{i\in\mathbb Z}$,
$\tilde\psi=(\tilde\psi_i)_{i\in\mathbb Z}\in L^{\infty}(\mathbb R,\ell^2)$.

As before, we rewrite \eqref{eq:1.2} and \eqref{eq:1.3} in the abstract form on the phase space $\ell^2\times\ell^2$.
For $\varepsilon\in(0,1]$,
\begin{equation}\label{eq:6.3}
	\begin{aligned}
	&d\dot{u}^{\varepsilon}(t)+\alpha u^{\varepsilon}(t)\,dt
	-\nu Au^{\varepsilon}(t)\,dt+\lambda u^{\varepsilon}(t)\,dt
	\\&=\bigl(f^{\varepsilon}(t,u^{\varepsilon}(t),\mathcal L_{u^{\varepsilon}(t)})+g(t)\bigr)\,dt
	+\sigma^{\varepsilon}(t,u^{\varepsilon}(t),\mathcal L_{u^{\varepsilon}(t)})\,dW(t),
	\qquad t>\tau,\\
	\text{with}\ \ &v^{\varepsilon}(t)=\dot{u}^{\varepsilon}(t)+\delta u^{\varepsilon}(t),
	\end{aligned}
\end{equation}
and the limit equation (state-dependent, no distribution dependence):
\begin{equation}\label{eq:6.4}
	\begin{aligned}
	&d\dot{u}(t)+\alpha u(t)\,dt
	-\nu Au(t)\,dt+\lambda u(t)\,dt
	=\bigl(f(t,u(t))+g(t)\bigr)\,dt
	+\sigma(t,u(t))\,dW(t),
	\qquad t>\tau,
	\\\text{with}\ \ &v(t)=\dot{u}(t)+\delta u^{\varepsilon}(t).
\end{aligned}
\end{equation}

By \eqref{eq:6.2}, for all $t\in\mathbb R$, $u\in\ell^2$ and $\mu\in\mathcal P_2(\ell^2)$,
\begin{equation}\label{eq:6.5}
	\bigl\|\sigma^{\varepsilon}(t,u,\mu)-\sigma(t,u)\bigr\|_{L_2(\ell^2,\ell^2)}^2
	\le 2\varepsilon^2\|\tilde\psi(t)\|^2
	\bigl(\|u\|^2+\rho(\mu,\hat{\delta_0})^2\bigr).
\end{equation}

We denote by $\Phi^{\varepsilon}(t,\tau,\Phi_{\tau})=\binom{u^{\varepsilon}(t,\tau,u_{\tau})}{v^{\varepsilon}(t,\tau,v_{\tau})}$ the solution of \eqref{eq:6.3} with the initial condition $\Phi^{\varepsilon}_{\tau}=\binom{u_{\tau}}{v_{\tau}}$. The next lemma is about the convergence of solutions of \eqref{eq:6.3} as $\varepsilon\to0^{+}$.
\begin{lem}
	\label{lem:6.1}
Assume that \eqref{eq:6.1}--\eqref{eq:6.2} hold. Then for every  $\tau\in\mathbb R$, $T>0$ and $R>0$, the solutions of \eqref{eq:6.3} and \eqref{eq:6.4} satisfy
\begin{align}\label{eq:6.6}
	\lim_{\varepsilon\to0^{+}}
	\sup_{\|\Phi_{\tau}\|_{L^2(\Omega,\ell^2\times\ell^2)}^2\le R}
	\sup_{\tau\le t\le \tau+T}
	\mathbb E\Bigl(\bigl\|\Phi^{\varepsilon}(t,\tau,\Phi_{\tau})
	-\Phi(t,\tau,\Phi_{\tau})\bigr\|_{\ell^2\times\ell^2}^2\Bigr)=0.
\end{align}	
\end{lem}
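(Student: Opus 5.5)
The plan is to estimate the difference of the two solutions directly by It\^o's formula on the energy functional, just as in the proof of Lemma \ref{lem:5.1}, and close with Gronwall's inequality. The only new source term comes from \eqref{eq:6.1}--\eqref{eq:6.5}; it will be of order $\varepsilon^2$ times second moments of $\Phi^{\varepsilon}$.

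Fix $\tau$, $T$, $R$ and $\Phi_\tau$ with $\mathbb E\|\Phi_\tau\|^2\le R$. Set $y=u^{\varepsilon}-u$ and $z=v^{\varepsilon}-v$. Both processes start from the same datum, so $y(\tau)=z(\tau)=0$. Applying It\^o's formula to $(\lambda+\delta^2-\alpha\delta)\|y\|^2+\|z\|^2+\nu\|By\|^2$ and taking expectations kills the martingale term. As in \eqref{eq:5.4}, this gives
\[
\mathbb E\bigl[(\lambda+\delta^2-\alpha\delta)\|y(t)\|^2+\|z(t)\|^2+\nu\|By(t)\|^2\bigr]\le 2\int_\tau^t\mathbb E\bigl(f^{\varepsilon}(s,u^{\varepsilon},\mathcal L_{u^{\varepsilon}})-f(s,u),z\bigr)ds+\int_\tau^t\mathbb E\|\sigma^{\varepsilon}(s,u^{\varepsilon},\mathcal L_{u^{\varepsilon}})-\sigma(s,u)\|^2_{L_2}ds .
\]

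The key step is to split each coefficient difference as
\[
f^{\varepsilon}(s,u^{\varepsilon},\mathcal L_{u^{\varepsilon}})-f(s,u)=\bigl[f^{\varepsilon}(s,u^{\varepsilon},\mathcal L_{u^{\varepsilon}})-f(s,u^{\varepsilon})\bigr]+\bigl[f(s,u^{\varepsilon})-f(s,u)\bigr],
\]
and likewise for $\sigma$. The second bracket is controlled by the Lipschitz bounds \eqref{eq:2.2} and \eqref{eq:2.6} for the limit coefficients, which satisfy (A1)--(A3) without measure dependence. This gives $\|f(s,u^{\varepsilon})-f(s,u)\|\le\|\psi\|_{L^\infty}\|y\|$, and similarly for $\sigma$ with $\kappa$. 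For the first bracket, \eqref{eq:6.1}, \eqref{eq:6.5} and \eqref{eq:2.8} yield bounds of the form
\[
2\varepsilon^2\|\tilde\varphi\|^2_{L^\infty}\bigl(\mathbb E\|u^{\varepsilon}\|^2+\rho(\mathcal L_{u^{\varepsilon}},\hat\delta_0)^2\bigr)\le 4\varepsilon^2\|\tilde\varphi\|^2_{L^\infty}\,\mathbb E\|u^{\varepsilon}(s)\|^2 ,
\]
and the analogous one with $\tilde\psi$. Young's inequality handles the $z$ inner product. Altogether,
\[
\mathbb E\|(y,z)(t)\|^2\le C_1\int_\tau^t\mathbb E\|(y,z)(s)\|^2ds+C_2\varepsilon^2\int_\tau^{\tau+T}\mathbb E\|\Phi^{\varepsilon}(s)\|^2ds ,
\]
with $C_1,C_2$ depending only on $\tau$, $T$ and the uniform constants in (A1)--(A3) and \eqref{eq:6.1}--\eqref{eq:6.2}, not on $\varepsilon$.

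The remaining ingredient is a uniform-in-$\varepsilon$ second moment bound $\sup_{\varepsilon\in(0,1)}\sup_{\tau\le s\le\tau+T}\mathbb E\|\Phi^{\varepsilon}(s,\tau,\Phi_\tau)\|^2\le C(R,T,\tau)$. I would obtain it from Step 3 of Proposition \ref{pro:3.1} (estimate \eqref{eq:3.24}) or from \eqref{eq:4.6}. Their constants depend only on $\|\gamma\|$, $\|\kappa\|$, $\|\chi\|$ and $g$, which are uniform in $\varepsilon$ by assumption. Gronwall's lemma then gives
\[
\sup_{\tau\le t\le\tau+T}\mathbb E\|\Phi^{\varepsilon}(t)-\Phi(t)\|^2\le C_2\,T\,C(R,T,\tau)\,e^{C_1T}\varepsilon^2 .
\]
The right-hand side is independent of $\Phi_\tau$ in the $R$-ball, and \eqref{eq:6.6} follows by letting $\varepsilon\to0^+$.

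I expect the main obstacle to be bookkeeping rather than depth. First, the uniformity of the moment bound in $\varepsilon$ must be checked honestly. Second, the measure-dependent terms must be controlled only through $\mathbb E\|u^{\varepsilon}\|^2$ via \eqref{eq:2.8}, with no cross term involving $\mathcal L_u$, since the limit system has no law dependence. The cross term $2\nu(Ay,z)$ does not arise here, because it is absorbed into $\frac{d}{dt}\nu\|By\|^2$ exactly as in \eqref{eq:4.2}.
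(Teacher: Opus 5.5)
Your proposal is correct. It shares the paper's skeleton (It\^o formula for the difference, Young's inequality, Gronwall), but it pivots the coefficient difference at a different point, and that changes what has to be controlled.

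The paper telescopes $f^{\varepsilon}(s,u^{\varepsilon},\mathcal L_{u^{\varepsilon}})-f(s,u)$ through $f^{\varepsilon}(s,u,\mathcal L_{u^{\varepsilon}})$ and $f^{\varepsilon}(s,u,\mathcal L_{u})$, and likewise for $\sigma$. It therefore uses the Lipschitz constants of the \emph{perturbed} coefficients in both the state and the law variable. The law part enters via \eqref{eq:2.8}, $\mathbb W_2^{\mathbb R}(\mathcal L_{u_i^{\varepsilon}},\mathcal L_{u_i})^2\le\mathbb E|w_i|^2$, so these constants must be uniform in $\varepsilon$. It then applies \eqref{eq:6.1}--\eqref{eq:6.2} along the \emph{limit} solution. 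As a result, the $\varepsilon$-source term involves only $\mathbb E\|u(s)\|^2$, which does not depend on $\varepsilon$; the paper bounds it via Lemma \ref{lem:4.2}.

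You pivot at $f(s,u^{\varepsilon})$ instead. Only the state-Lipschitz constant of the law-free limit coefficients enters, and no law-difference term appears at all. The price is that \eqref{eq:6.1}--\eqref{eq:6.2} are evaluated along $u^{\varepsilon}$, so you need a uniform-in-$\varepsilon$ bound on $\sup_{\tau\le s\le\tau+T}\mathbb E\|\Phi^{\varepsilon}(s)\|^2$. That bound is available from the finite-interval Gronwall estimate \eqref{eq:3.24}, which uses only the growth bounds. Moreover, uniform growth of $f^{\varepsilon},\sigma^{\varepsilon}$ already follows from \eqref{eq:6.1}--\eqref{eq:6.2} together with the growth of $f,\sigma$. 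So your route needs no uniformity in $\varepsilon$ of the Lipschitz constants of the perturbed coefficients; those constants are needed only for well-posedness at each fixed $\varepsilon$.

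There are also side gains in your version. Applying Young's inequality as $2(a,z)\le\|a\|^2+\|z\|^2$ with $\|a\|^2=O(\varepsilon^2)$ gives the rate $O(\varepsilon^2)$, instead of the $O(\varepsilon)$ the paper obtains. Your weighted functional $(\lambda+\delta^2-\alpha\delta)\|y\|^2+\|z\|^2+\nu\|By\|^2$ makes explicit the cancellation of $(y,z)$ and $(Ay,z)$, which \eqref{eq:6.7} glosses over. Finally, bounding the state difference paired with $z$ by the Lipschitz condition \eqref{eq:2.2} is the right tool. The one-sided condition \eqref{eq:2.3} invoked in \eqref{eq:6.8} does not by itself control that term.
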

\begin{proof}
Let $w(t)=u^{\varepsilon}(t)-u(t)$ and $z(t)=v^{\varepsilon}(t)-v(t)$. From \eqref{eq:6.3} and \eqref{eq:6.4} we obtain that for all $t>\tau$,
\begin{align*}
	d(\Phi^{\varepsilon}(t)-\Phi(t))=[F(\Phi^{\varepsilon}(t))-F(\Phi(t))]dt+[G(\Phi^{\varepsilon}(t))-G(\Phi(t))]dW(t).
\end{align*}
By the It\^{o} formula, taking the expectation, we get that for all $t\ge\tau$,
\begin{equation}
	\begin{aligned}
		\label{eq:6.7}
		\mathbb{E}\bigl\|\Phi^{\varepsilon}(t)
		-\Phi(t)\bigr\|^2
		\le&2\mathbb{E}(\int_{\tau}^{t}(f^{\varepsilon}(s,u^{\varepsilon}(s),\mathcal L_{u^{\varepsilon}(s)})-f(s,u(s)),z(s))ds)\\&+\mathbb{E}(\int_{\tau}^{t}\|\sigma^{\varepsilon}(s,u^{\varepsilon}(s),\mathcal L_{u^{\varepsilon}(s)})-\sigma(s, u(s))\|_{L_2(\ell^2,\ell^2)}^2ds).
	\end{aligned}
\end{equation}	
We estimate each term in the right-hand side of \eqref{eq:6.7}, respectively. For the first term,  we have that
\begin{equation*}
	\begin{aligned}
		&2\mathbb{E}(\int_{\tau}^{t}(f^{\varepsilon}(s,u^{\varepsilon}(s),\mathcal L_{u^{\varepsilon}(s)})-f(s,u(s)),z(s))ds)
		\\=&2\mathbb{E}(\int_{\tau}^{t}(f^{\varepsilon}(s,u^{\varepsilon}(s),\mathcal L_{u^{\varepsilon}(s)})-f^{\varepsilon}(s,u(s),L_{u^{\varepsilon}(s)}),z(s))ds)
		\\&+2\mathbb{E}(\int_{\tau}^{t}(f^{\varepsilon}(s,u(s),L_{u^{\varepsilon}(s)})-f^{\varepsilon}(s,u(s),L_{u(s)}),z(s))ds)
		\\&+2\mathbb{E}(\int_{\tau}^{t}(f^{\varepsilon}(s,u(s),L_{u(s)})-f(s,u(s)),z(s))ds).
	\end{aligned}
\end{equation*}
Along with \eqref{eq:2.2}, \eqref{eq:2.3}, \eqref{eq:2.8} and \eqref{eq:6.1}, we obtain that
\begin{equation}
	\begin{aligned}
		\label{eq:6.8}
		&2\mathbb{E}(\int_{\tau}^{t}(f^{\varepsilon}(s,u^{\varepsilon}(s),\mathcal L_{u^{\varepsilon}(s)})-f(s,u(s)),z(s))ds)
		\\\le&2\mathbb{E}(\int_{\tau}^{t}\|\eta(s)\|\|w(s)\|\|z(s)\|ds)
		+2\mathbb{E}(\int_{\tau}^{t}\sum_{i \in \mathbb{Z}}\varphi_i(s)(\mathbb{E}\left|w_i(s)\right|^2)^\frac{1}{2}\left|z_i(s)\right|ds)
		\\&+2\varepsilon\mathbb{E}(\int_{\tau}^{t}\sum_{i \in \mathbb{Z}}\tilde{\varphi}_i(s)(\left|u_i(s)\right|+(\mathbb{E}\left|w_i(s)\right|^2)^\frac{1}{2})\left|z_i(s)\right|ds)
		\\\le&\|\eta\|_{L^\infty([\tau,T],\ell^2)}\int_{\tau}^{t}\mathbb{E}(\|w(s)\|^2+\|z(s)\|^2)ds
		+2\|\varphi\|_{L^\infty([\tau,T],\ell^2)}\int_{\tau}^{t}\mathbb{E}(\|w(s)\|^2+\|z(s)\|^2)ds
		\\&+2\varepsilon\mathbb{E}(\int_{\tau}^{t}\|\tilde{\varphi}(s)\|(\|u(s)\|^2+\mathbb{E}\|u(s)\|^2)ds)+\varepsilon\mathbb{E}(\int_{\tau}^{t}\|\tilde{\varphi}(s)\|\|z(s)\|^2ds)
		\\\le&(\|\eta\|_{L^\infty([\tau,T],\ell^2)}+\|\varphi\|_{L^\infty([\tau,T],\ell^2)}+\varepsilon\|\tilde{\varphi}\|_{L^\infty([\tau,T],\ell^2)})\mathbb{E}(\int_{\tau}^{t}\bigl\|\Phi^{\varepsilon}(s)
		-\Phi(s)\bigr\|^2ds)
		\\&+\|\tilde{\varphi}\|_{L^\infty([\tau,T],\ell^2)}\int_{\tau}^{t}\mathbb{E}\|u(s)\|^2ds.
	\end{aligned}
\end{equation}
For the second term, on the right-hand side of \eqref{eq:6.7}, we have that
\begin{equation*}
	\begin{aligned}
		&\mathbb{E}(\int_{\tau}^{t}\|\sigma^{\varepsilon}(s,u^{\varepsilon}(s),\mathcal L_{u^{\varepsilon}(s)})-\sigma(s, u(s))\|_{L_2(\ell^2,\ell^2)}^2ds)
		\\\le&\mathbb{E}(\int_{\tau}^{t}\|\sigma^{\varepsilon}(s,u^{\varepsilon}(s),\mathcal L_{u^{\varepsilon}(s)})-\sigma^{\varepsilon}(s, u(s),\mathcal L_{u(s)})\|_{L_2(\ell^2,\ell^2)}^2ds)
		\\&+\mathbb{E}(\int_{\tau}^{t}\|\sigma^{\varepsilon}(s, u(s),\mathcal L_{u(s)})-\sigma(s,u(s))\|_{L_2(\ell^2,\ell^2)}^2ds).
	\end{aligned}
\end{equation*}
By \eqref{eq:2.12}, \eqref{eq:2.8} and \eqref{eq:6.5}, we obtain that
\begin{equation}
	\begin{aligned}
		\label{eq:6.9}
		&\mathbb{E}(\int_{\tau}^{t}\|\sigma^{\varepsilon}(s,u^{\varepsilon}(s),\mathcal L_{u^{\varepsilon}(s)})-\sigma(s, u(s))\|_{L_2(\ell^2,\ell^2)}^2ds)
		\\\le&2\mathbb{E}(\int_{\tau}^{t}\|\kappa(s)\|^2(\|w(s)\|^2+\mathbb{E}\|w(s)\|^2)ds)+2\varepsilon^2\mathbb{E}(\int_{\tau}^{t}\|\tilde\psi(s)\|^2(\|u(s)\|^2+\mathbb{E}\|u(s)\|^2)ds)
		\\\le&4\|\kappa\|^2_{L^\infty(\mathbb R,\ell^2)}\mathbb{E}(\int_{\tau}^{t}\bigl\|\Phi^{\varepsilon}(s)
		-\Phi(s)\bigr\|^2ds)
		+4\varepsilon^2\|\tilde\psi\|^2_{L^\infty([\tau,T],\ell^2)}\int_{\tau}^{t}\mathbb{E}\|u(s)\|^2ds.
	\end{aligned}
\end{equation}
Thus, from \eqref{eq:6.7}-\eqref{eq:6.9}, there exist constants $C_2$ and $C_3$ such that
\begin{equation*}
	\begin{aligned}
		\mathbb{E}\bigl\|\Phi^{\varepsilon}(t)
		-\Phi(t)\bigr\|^2
		\le C_2\mathbb{E}(\int_{\tau}^{t}\bigl\|\Phi^{\varepsilon}(s)
		-\Phi(s)\bigr\|^2ds)+\varepsilon C_3\int_{\tau}^{t}\mathbb{E}\|u(s)\|^2ds.
	\end{aligned}
\end{equation*}
Then by Lemma \ref{lem:4.2}, we know that for all $t\in[\tau,\tau+T]$ and $\varepsilon\in(0,1)$,
\begin{equation*}
	\begin{aligned}
		\mathbb{E}\bigl\|\Phi^{\varepsilon}(t)
		-\Phi(t)\bigr\|^2
		\le C_2\mathbb{E}(\int_{\tau}^{t}\bigl\|\Phi^{\varepsilon}(s)
		-\Phi(s)\bigr\|^2ds)+\varepsilon C_3C_4(1+\|\Phi_{\tau}\|_{L_2(\Omega,\ell^2\times\ell^2)}^2)T,
	\end{aligned}
\end{equation*}
where $C_4=C_4(\tau,T)>0$ is a constant independent of $\varepsilon$. Then by Gronwall's inequality, we have for all $t\in[\tau,\tau+T]$ and $\varepsilon\in(0,1)$ and $\|\Phi_{\tau}\|_{L_2(\Omega,\ell^2\times\ell^2)}\le R$,
\begin{equation*}
	\begin{aligned}
		\mathbb{E}\bigl\|\Phi^{\varepsilon}(t)
		-\Phi(t)\bigr\|^2
		\le \varepsilon C_3C_4(1+R^2)e^{C_2 T} ,
	\end{aligned}
\end{equation*}
which concludes the proof.
\end{proof}

As an immediate consequence of Lemma \ref{lem:6.1}, we obtain the following convergence result.

\begin{Corollary}\label{coro:6.1}
	Assume that \eqref{eq:6.1}--\eqref{eq:6.2} hold.
	Then for every $t\in\mathbb R^+$, $\tau\in\mathbb R$ and $R>0$,
	\begin{align}\label{eq:6.10}
		\lim_{\varepsilon\to0^{+}}
		\sup_{\substack{\mu\in\mathcal P_4(\ell^2\times\ell^2)\\
				\mu(\|\cdot\|^2)\le R}}
		\mathbb W_2\bigl(\Theta^{\varepsilon}(t,\tau)\mu,\Theta(t,\tau)\mu\bigr)=0,
	\end{align}
	and hence
	\begin{align}\label{eq:6.11}
		\lim_{\varepsilon\to0^{+}}
		\sup_{\substack{\mu\in\mathcal P_4(\ell^2\times\ell^2)\\
				\mu(\|\cdot\|^2)\le R}}
		d_{\mathcal P(\ell^2\times\ell^2)}\bigl(\Theta^{\varepsilon}(t,\tau)\mu,
		\Theta(t,\tau)\mu\bigr)=0.
	\end{align}
\end{Corollary}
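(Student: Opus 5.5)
The plan is to deduce both limits directly from Lemma \ref{lem:6.1} by a synchronous coupling, followed by a comparison between the Wasserstein distance and the bounded Lipschitz metric.

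\textbf{Step 1: the Wasserstein limit \eqref{eq:6.10}.} Fix $t\ge 0$, $\tau\in\mathbb R$, $R>0$, and take $\mu\in\mathcal P_4(\ell^2\times\ell^2)$ with $\mu(\|\cdot\|^2)\le R$. Choose an $\mathcal F_\tau$-measurable random variable $\Phi_\tau$ with $\mathcal L_{\Phi_\tau}=\mu$; this is possible after enlarging the probability space if needed, and weak uniqueness (via Prop.~3.2 of \cite{32}) guarantees that the laws of the solutions do not depend on this choice. Then $\|\Phi_\tau\|^2_{L^2(\Omega,\ell^2\times\ell^2)}=\mu(\|\cdot\|^2)\le R$. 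Drive \eqref{eq:6.3} and \eqref{eq:6.4} with the same Wiener process $W$ and the same initial datum $\Phi_\tau$. By \eqref{eq:5.2},
\[
\Theta^\varepsilon(t,\tau)\mu=\mathcal L_{\Phi^\varepsilon(\tau+t,\tau,\Phi_\tau)},\qquad \Theta(t,\tau)\mu=\mathcal L_{\Phi(\tau+t,\tau,\Phi_\tau)}.
\]
The joint law of the pair $\bigl(\Phi^\varepsilon(\tau+t,\tau,\Phi_\tau),\Phi(\tau+t,\tau,\Phi_\tau)\bigr)$ is a coupling of these two measures. By the definition of $\mathbb W_2$,
\[
\mathbb W_2\bigl(\Theta^\varepsilon(t,\tau)\mu,\Theta(t,\tau)\mu\bigr)^2\le \mathbb E\|\Phi^\varepsilon(\tau+t,\tau,\Phi_\tau)-\Phi(\tau+t,\tau,\Phi_\tau)\|^2 .
\]
Apply Lemma \ref{lem:6.1} with horizon $T\ge t$. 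The right-hand side is bounded by a quantity that is uniform over all $\Phi_\tau$ in the $L^2$-ball of radius $\sqrt R$ and tends to $0$; in the proof of Lemma \ref{lem:6.1} this quantity is explicitly of order $\varepsilon C(1+R^2)e^{C_2T}$. Taking the supremum over $\mu$ gives \eqref{eq:6.10}.

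\textbf{Step 2: from $\mathbb W_2$ to the bounded Lipschitz metric \eqref{eq:6.11}.} For $\varphi\in L_b$ with $\|\varphi\|_{L_b}\le1$ and any coupling $\pi$ of $\mu_1,\mu_2$, we have
\[
|(\varphi,\mu_1)-(\varphi,\mu_2)|\le\int\|x-y\|\,\pi(dx,dy)\le\Bigl(\int\|x-y\|^2\,\pi(dx,dy)\Bigr)^{1/2}.
\]
Taking the infimum over couplings gives $d_{\mathcal P(\ell^2\times\ell^2)}(\mu_1,\mu_2)\le\mathbb W_1(\mu_1,\mu_2)\le \mathbb W_2(\mu_1,\mu_2)$. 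Hence \eqref{eq:6.11} follows from \eqref{eq:6.10}.

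\textbf{Main obstacle.} The only delicate point is making sure the uniformity claimed in Lemma \ref{lem:6.1} really holds, namely that the constant $C_4$ bounding $\sup_{s\in[\tau,\tau+T]}\mathbb E\|u(s)\|^2$ depends only on $R$ and $T$. This follows from the Gronwall bound \eqref{eq:3.24} (or Lemma \ref{lem:4.2}) applied to the limit system. Beyond that, I would only check that the coupling construction is legitimate, i.e.\ that the laws of the solutions are independent of the chosen representative $\Phi_\tau$, which is again weak uniqueness.
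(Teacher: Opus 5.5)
Your argument is correct and is exactly what the paper means when it calls Corollary \ref{coro:6.1} an immediate consequence of Lemma \ref{lem:6.1}. The synchronous coupling (same $W$, same initial datum) gives $\mathbb W_2(\Theta^{\varepsilon}(t,\tau)\mu,\Theta(t,\tau)\mu)^2\le\mathbb E\|\Phi^{\varepsilon}(\tau+t)-\Phi(\tau+t)\|^2$, which Lemma \ref{lem:6.1} controls uniformly on the ball $\mu(\|\cdot\|^2)\le R$, and \eqref{eq:6.11} then follows from $d_{\mathcal P(\ell^2\times\ell^2)}\le\mathbb W_1\le\mathbb W_2$; the paper gives no separate proof, so you are simply supplying the details it leaves implicit (including realizing the initial law on an enlarged space).
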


Denote by $\Theta^{\varepsilon}$ and $\Theta$ the non-autonomous measure dynamical
systems associated with \eqref{eq:6.3} and \eqref{eq:6.4}, and let their $\mathcal D$-pullback
measure attractors be
$\mathcal A^{\varepsilon}=\{\mathcal A^{\varepsilon}(\tau)\}_{\tau\in\mathbb R}$
and $\mathcal A=\{\mathcal A(\tau)\}_{\tau\in\mathbb R}$, respectively.
The main theorem of this chapter proves upper semicontinuity.

\begin{thm}\label{th:6.1}
	Assume that \eqref{eq:6.1}--\eqref{eq:6.2} hold.
	Then for every $\tau\in\mathbb R$,
	\begin{align}\label{eq:6.12}
		\lim_{\varepsilon\to0^{+}}
		d_{\mathcal P_4(\ell^2\times\ell^2)}\!\bigl(\mathcal A^{\varepsilon}(\tau),\,
		\mathcal A(\tau)\bigr)=0,
	\end{align}
	where $d_{\mathcal P_4(\ell^2\times\ell^2)}(\cdot,\cdot)$ denotes the Hausdorff
	semi-distance in $\mathcal P_4(\ell^2\times\ell^2)$.
\end{thm}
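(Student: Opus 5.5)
The proof will follow the standard abstract scheme for upper semicontinuity of pullback attractors: a uniform absorbing set, convergence of the finite-time dynamics, and uniform asymptotic compactness. We argue by contradiction. If \eqref{eq:6.12} fails, there are $\epsilon_0>0$, $\varepsilon_n\to0^+$ and $\mu_n\in\mathcal A^{\varepsilon_n}(\tau)$ with $d_{\mathcal P(\ell^2\times\ell^2)}(\mu_n,\mathcal A(\tau))\ge\epsilon_0$. It therefore suffices to show that every such sequence has a subsequence converging to some element of $\mathcal A(\tau)$.

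\emph{Step 1 (uniform bounds).} Since the hypotheses (A1)--(A3) and \eqref{eq:2.21}--\eqref{eq:2.26} hold uniformly in $\varepsilon$, the constants $L_1$ and $L_3$ in Lemmas \ref{lem:4.1} and \ref{lem:4.4} are independent of $\varepsilon$. Hence the family $K(\tau)$ of Lemma \ref{lem:5.2} is a common closed $\mathcal D$-pullback absorbing set for all $\Theta^\varepsilon$ and for $\Theta$. By invariance, $\mathcal A^\varepsilon(s)\subseteq K(s)$ for every $s$ and every $\varepsilon$. Thus $\bigcup_\varepsilon\mathcal A^\varepsilon\in\mathcal D$, with fourth moments bounded by $L_4(s)$ uniformly in $\varepsilon$.

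\emph{Step 2 (uniform compactness).} By invariance, $\mathcal A^{\varepsilon_n}(\tau)=\Theta^{\varepsilon_n}(t_n,\tau-t_n)\mathcal A^{\varepsilon_n}(\tau-t_n)$. We may therefore write $\mu_n=\Theta^{\varepsilon_n}(t_n,\tau-t_n)\nu_n$ with $t_n\to\infty$ and $\nu_n\in K(\tau-t_n)$. The tail estimate in Lemma \ref{lem:4.3} has constants independent of $\varepsilon$, and the same holds for the tightness argument of Lemma \ref{lem:5.3}. Repeating that argument along the sequence with varying $\varepsilon_n$ shows that $\{\mu_n\}$ is tight. Passing to a subsequence, $\mu_n\rightharpoonup\mu$, and $\mu\in K(\tau)$ because $K(\tau)$ is closed.

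\emph{Step 3 (identifying the limit).} It remains to show $\mu\in\mathcal A(\tau)$. This is the main obstacle, since only weak limits are available. We use the attraction property of $\mathcal A$. Fix $t>0$ and write $\mu_n=\Theta^{\varepsilon_n}(t,\tau-t)\xi_n$ with $\xi_n\in\mathcal A^{\varepsilon_n}(\tau-t)\subseteq K(\tau-t)$. Then
\[
d_{\mathcal P(\ell^2\times\ell^2)}(\mu_n,\mathcal A(\tau))\le d_{\mathcal P(\ell^2\times\ell^2)}\bigl(\Theta^{\varepsilon_n}(t,\tau-t)\xi_n,\Theta(t,\tau-t)\xi_n\bigr)+d_{\mathcal P(\ell^2\times\ell^2)}\bigl(\Theta(t,\tau-t)K(\tau-t),\mathcal A(\tau)\bigr).
\]
Since $K\in\mathcal D$, the second term can be made smaller than $\epsilon_0/2$ by choosing $t$ large. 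With $t$ then fixed, the first term tends to $0$ by Corollary \ref{coro:6.1}; this applies because $\xi_n(\|\cdot\|^2)\le L_4(\tau-t)^{1/2}$ uniformly in $n$. This contradicts the choice of $\mu_n$, and in fact this estimate alone already yields \eqref{eq:6.12} without Step 2. Step 2, together with the compactness of $\mathcal A(\tau)$, is still useful: it shows that the convergence also holds along subsequences in the stronger sense needed for the Hausdorff semi-distance.

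The delicate points are twofold. First, one must check that Corollary \ref{coro:6.1} is uniform over initial measures with bounded second moment. This follows from Lemma \ref{lem:6.1}, whose bound $\varepsilon C_3C_4(1+R^2)e^{C_2T}$ is uniform. Second, one must verify that every constant in Section 4 is independent of $\varepsilon$, which is the point of assuming uniform (A1)--(A3).
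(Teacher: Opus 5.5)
Your Step 3 is essentially the paper's proof. The paper also uses the common absorbing set $K$ from Lemma \ref{lem:5.2} and picks $T$ by attraction of $\mathcal A$ from $K(\tau-T)$. It then applies Corollary \ref{coro:6.1} uniformly on $K(\tau-T)$ and uses invariance $\mathcal A^{\varepsilon}(\tau)=\Theta^{\varepsilon}(T,\tau-T)\mathcal A^{\varepsilon}(\tau-T)$, just directly rather than by contradiction.

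Step 2 is superfluous. Your closing claim that it supplies a ``stronger sense'' needed for the Hausdorff semi-distance is unfounded: that semi-distance is taken with respect to $d_{\mathcal P(\ell^2\times\ell^2)}$, which Step 3 already controls.
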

\begin{proof}
Since $f^{\varepsilon}, g^{\varepsilon}, \sigma^{\varepsilon}$ and $f, g, \sigma$ satisfy (A1)--(A3) uniformly with respect to $\varepsilon \in (0, 1)$, we have that the family $K = \{ K(\tau) : \tau \in \mathbb{R} \}$ given by Lemma \ref{lem:5.2} is a closed $\mathcal{D}$-pullback absorbing set of $\Theta$ and $\Theta^{\varepsilon}$ for all $\varepsilon \in (0, 1)$. In particular, for every $\tau \in \mathbb{R}$, $\mathcal{A}(\tau)$ and $\mathcal{A}^{\varepsilon}(\tau)$ are subsets of $K(\tau)$ for all $\varepsilon \in (0, 1)$.

Since $\mathcal{A}$ is the $\mathcal{D}$-pullback measure attractor of $\Phi$ in $(\mathcal{P}_4(\ell^2\times\ell^2), d_{\mathcal{P}(\ell^2\times\ell^2)})$, we obtain that for every $\varpi > 0$, there exists $T = T(\varpi, \tau, K) > 0$ such that	
\begin{align}\label{eq:6.13}
	d_{\mathcal P_4(\ell^2\times\ell^2)}\!
	\bigl(\Theta(T,\tau-T)K(\tau-T),\;\mathcal A(\tau)\bigr)<\frac{\varpi}{2}.
\end{align}
By Corollary \ref{coro:6.1}, there exists $\hat \varepsilon\in(0,1)$ such that
for all $\varepsilon\in(0,\hat \varepsilon)$,
\begin{align}\label{eq:6.14}
	\sup_{\mu\in K(\tau-T)}
	d_{\mathcal P_4(\ell^2\times\ell^2)}\!
	\bigl(\Theta^{\varepsilon}(T,\tau-T)\mu,\;\Theta(T,\tau-T)\mu\bigr)
	<\frac{\varpi}{2}.
\end{align}
From \eqref{eq:6.13} and \eqref{eq:6.14} we obtain
\begin{align}\label{eq:6.15}
	\sup_{\mu\in\mathcal A^{\varepsilon}(\tau-T)}
	d_{\mathcal P_4(\ell^2\times\ell^2)}\!
	\bigl(\Theta^{\varepsilon}(T,\tau-T)\mu,\;\mathcal A(\tau)\bigr)<\varpi,
	\qquad\forall\,\varepsilon\in(0,\hat \varepsilon).
\end{align}
 By the invariance of $\mathcal A^{\varepsilon}$,
$\mathcal A^{\varepsilon}(\tau)=\Theta^{\varepsilon}(T,\tau-T)\mathcal A^{\varepsilon}(\tau-T)$,
and \eqref{eq:6.15} imply
\begin{align}\label{eq:6.16}
	\sup_{\mu\in\mathcal A^{\varepsilon}(\tau)}
	d_{\mathcal P_4(\ell^2\times\ell^2)}\!
	\bigl(\mu,\;\mathcal A(\tau)\bigr)<\varpi,
	\qquad\forall\,\varepsilon\in(0,\hat \varepsilon).
\end{align}
Thus $\lim\limits_{\varepsilon\to0^{+}}
d_{\mathcal P_4(\ell^2\times\ell^2)}(\mathcal A^{\varepsilon}(\tau),\mathcal A(\tau))=0$.

Finally, combining Theorem \ref{th5.4} (attractor existence) and
Theorem \ref{th:6.1} (upper semicontinuity), when both attractors are singletons,
we immediately obtain the convergence of invariant measures.

\begin{Corollary}\label{coro:7.2}
Under the hypotheses of Theorems \ref{th5.4} and \ref{th:6.1},
if $f^{\varepsilon}$, $\sigma^{\varepsilon}$, $f$ and $\sigma$ are all time-independent,
then the unique invariant measure $\mu^{\varepsilon}$ of \eqref{eq:1.2} converges to the unique
invariant measure $\mu$ of \eqref{eq:1.3} in $\mathcal P_4(\ell^2\times\ell^2)$:
\begin{align}\label{eq:6.17}
\lim_{\varepsilon\to0^{+}}d_{\mathcal P_4(\ell^2\times\ell^2)}(\mu^{\varepsilon},\mu)=0.
\end{align}
\end{Corollary}
\end{proof}

\section*{Acknowledgement}
\indent
\par
The authors would like to thank the reviewers for their helpful comments. This work was partially supported by the
	National Natural Science Foundation of China(No.12326414), Sichuan Science and Technology Program(No.2023NSFSC0076).



\begin{thebibliography}{99}
	
	\bibitem{1} S. Chen, U. C. Tauber, \emph{Non-equilibrium relaxation in a stochastic lattice Lotka--Volterra model}, Phys. Biol. \textbf{13} (2016), 025005.
	
	\bibitem{2} B. Heiba, S. Chen, U. C. Tauber, \emph{Boundary effects on population dynamics in stochastic lattice Lotka--Volterra models}, Physica A \textbf{491} (2018), 582--590.
	
	\bibitem{3} P. E. Kloeden, X. Han, \emph{Dissipative Lattice Dynamical Systems}, World Scientific, Singapore, 2023.
	
	\bibitem{4} B. Wang, \emph{Dynamics of systems on infinite lattices}, J. Differential Equations \textbf{221} (2006), 224--245.
	
	\bibitem{5} P. W. Bates, H. Lisei, K. Lu, \emph{Attractors for stochastic lattice dynamical systems}, Stoch. Dyn. \textbf{6} (2006), 1--21.
	
	\bibitem{6} X. Han, W. Shen, S. Zhou, \emph{Random attractors for stochastic lattice dynamical systems in weighted spaces}, J. Differential Equations \textbf{250} (2011), 1235--1266.
	
	\bibitem{7} Z. Wang, S. Zhou, \emph{Existence and upper semicontinuity of attractors for non-autonomous stochastic lattice systems with random coupled coefficients}, Commun. Pure Appl. Anal. \textbf{15} (2016), 2221--2245.
	
	\bibitem{8} S. Zhou, L. Wei, \emph{A random attractor for a stochastic second order lattice system with random coupled coefficients}, J. Math. Anal. Appl. \textbf{395} (2012), 42--55.
	
	\bibitem{9} X. Han, \emph{Random attractors for second order stochastic lattice dynamical systems with multiplicative noise in weighted spaces}, Stoch. Dyn. \textbf{12} (2012), 1150024.
	
	\bibitem{10} B. Wang, \emph{Dynamics of stochastic reaction-diffusion lattice system driven by nonlinear noise}, J. Math. Anal. Appl. \textbf{477} (2019), 104--132.
	
	\bibitem{11} R. Wang, B. Wang, \emph{Random dynamics of lattice wave equations driven by infinite-dimensional nonlinear noise}, Discrete Contin. Dyn. Syst. Ser. B \textbf{25} (2020), 2461--2493.
	
	\bibitem{12} R. Wang, Y. Li, \emph{Asymptotic behavior of stochastic discrete wave equations with nonlinear noise and damping}, J. Math. Phys. \textbf{61} (2020), 052701.
	
	\bibitem{13} R. Wang, \emph{Long-time dynamics of stochastic lattice Plate equations with nonlinear noise and damping}, J. Dyn. Differential Equations \textbf{33} (2021), 1--37.
	
	\bibitem{14} S. Zeng, J. Long, R. Wang, \emph{Pullback measure attractors and evolution system of measures for $p$-Laplacian Ginzburg--Landau lattice systems driven by superlinear L\'evy noise}, Adv. Nonlinear Stud. \textbf{26} (2026), 144--195.
	
	\bibitem{15} J.-P. Eckmann, M. Hairer, \emph{Invariant measures for stochastic partial differential equations in unbounded domains}, Nonlinearity \textbf{14} (2001), 133--151.
	
	\bibitem{16} B. Wang, \emph{Attractors for reaction-diffusion equations in unbounded domains}, Phys. D \textbf{128} (1999), 41--52.
	
	\bibitem{17} R. Hu, D. Li, T. Zeng, \emph{Pullback measure attractors for non-autonomous stochastic FitzHugh-Nagumo system with distribution dependence on unbounded domains,} Commun. Nonlinear Sci. Numer.\textbf{151} (2025), 109047.
	
	\bibitem{18} D. Li, B. Wang, X. Wang, \emph{Limiting behavior of invariant measures of stochastic delay lattice systems}, J. Dyn. Differential Equations \textbf{34} (2022), 1453--1487.
	
	\bibitem{19} A. Gu, \emph{Weak pullback mean random attractors for stochastic evolution equations and applications}, Stoch. Dyn. \textbf{22} (2022), 1--16.
	
	\bibitem{20} H. P. McKean, \emph{A class of Markov processes associated with nonlinear parabolic equations}, Proc. Natl. Acad. Sci. \textbf{56} (1966), 1907--1911.
	
	\bibitem{21} A. A. Vlasov, \emph{The vibrational properties of an electron gas}, Sov. Phys. \textbf{10} (1968), 721--733.
	
	\bibitem{22} A. S. Sznitman, \emph{Topics in propagation of chaos}, in: \emph{\'Ecole d'\'Et\'e de Probabilit\'es de Saint-Flour XIX -- 1989}, Lecture Notes in Math. \textbf{1464}, Springer, Berlin, 1991, pp. 165--251.
	
	\bibitem{23} M. R\"ockner, X. Zhang, \emph{Well-posedness of distribution dependent SDEs with singular drifts}, Bernoulli \textbf{27} (2021), 1131--1158.
	
	\bibitem{24} X. Huang, M. R\"ockner, F. Wang, \emph{Nonlinear Fokker--Planck equations for probability measures on path space and path-distribution dependent SDEs}, Discrete Contin. Dyn. Syst. \textbf{39} (2019), 3017--3035.
	
	\bibitem{25} D. Ba\~nos, \emph{The Bismut--Elworthy--Li formula for mean-field stochastic differential equations}, Ann. Inst. Henri Poincar\'e Probab. Stat. \textbf{54} (2018), 220--233.
	
	\bibitem{26} W. Hong, S. Li, W. Liu, \emph{Large deviation principle for McKean--Vlasov quasilinear stochastic evolution equations}, Appl. Math. Optim. \textbf{84} (2021), 1119--1147.
	
	\bibitem{27} W. Liu, Y. Song, J. Zhai, T. Zhang, \emph{Large and moderate deviation principles for McKean--Vlasov SDEs with jumps}, Potential Analysis \textbf{59} (2023), 1141--1190.
	
	\bibitem{28} J. Bao, M. Scheutzow, C. Yuan, \emph{Existence of invariant probability measures for functional McKean--Vlasov SDEs}, Electron. J. Probab. \textbf{27} (2022), 1--14.
	
	\bibitem{29} M. Liang, M. B. Majka, J. Wang, \emph{Exponential ergodicity for SDEs and McKean--Vlasov processes with L\'evy noise}, Ann. Inst. H. Poincar\'e Probab. Statist. \textbf{57} (2021), 1665--1701.
	
	\bibitem{30} F. Wang, \emph{Exponential ergodicity for non-dissipative McKean--Vlasov SDEs}, Bernoulli \textbf{29} (2023), 1035--1062.
	
	\bibitem{31} L. Shi, J. Shen, B. Wang, \emph{Long-time dynamics of McKean-Vlasov stochastic reaction-diffusion equations on R\^n}, J. Differential Equations \textbf{453} (2026), 113855.
	
	\bibitem{32} L. Shi, J. Shen, K. Lu, \emph{Pullback measure attractors and limiting behaviors of McKean--Vlasov stochastic delay lattice systems}, Sci. China Math.‌ (2026), https://doi.org/10.1007/s11425-025-2542-5.
	
	\bibitem{33} F. Bai, Z. Chen, X. Sun, \emph{Existence and approximation of measure attractors and invariant measures for McKean--Vlasov stochastic lattice system with L\'evy noise}, J. Differential Equations \textbf{453} (2026), 113784.
	
	\bibitem{34} B. Schmalfuss, \emph{Long-time behaviour of the stochastic Navier--Stokes equation}, Math. Nachr. \textbf{152} (1991), 7--20.
	
	\bibitem{35} H. Crauel, \emph{Measure attractors and Markov attractors}, Dyn. Syst. \textbf{23} (2008), 75--107.
	
	\bibitem{36} B. Schmalfuss, \emph{Measure attractors and random attractors for stochastic partial differential equations}, Stoch. Anal. Appl. \textbf{17} (1999), 1075--1101.
	
	\bibitem{37} C. Marek, N. J. Cutland, \emph{Measure attractors for stochastic Navier--Stokes equations}, Electron. J. Probab. \textbf{8} (1998), 1--15.
	
	\bibitem{38} H. Morimoto, \emph{Attractors of probability measures for semilinear stochastic evolution equations}, Stoch. Anal. Appl. \textbf{10} (1992), 205--212.
	
	\bibitem{39} B. Schmalfuss, \emph{Qualitative properties for the stochastic Navier--Stokes equation}, Nonlinear Anal. \textbf{28} (1997), 1545--1563.
	
	\bibitem{40} C. Qiao, Y. Li, X. Tang, \emph{Enlarged measure attractors of superlinearly stochastic fractional discrete $p$-Laplace equations on Banach spaces}, Banach J. Math. Anal. \textbf{20} (2026), 32.
	
	\bibitem{41} D. Li, B. Wang, \emph{Pullback measure attractors for non-autonomous stochastic reaction-diffusion equations on thin domains}, J. Differential Equations \textbf{397} (2024), 232--261.
	
	\bibitem{42} R. Li, S. Mi, D. Li, \emph{Pullback measure attractors for non-autonomous stochastic 3D globally modified Navier--Stokes equations}, Qual. Theory Dyn. Syst. \textbf{23} (2024), 246.
	
	\bibitem{43} S. Mi, R. Li, D. Li, \emph{Pullback measure attractors for non-autonomous fractional stochastic reaction-diffusion equations on unbounded domains}, Appl. Math. Optim. \textbf{90} (2024), 55.
	
	\bibitem{44} S. Mi, D. Li, T. Zeng, \emph{Pullback measure attractors for non-autonomous stochastic lattice systems}, Proc. Roy. Soc. Edinburgh Sect. A (2024), 1--20.
	
	\bibitem{45} S. Weng, S. Mi, D. Li, \emph{Measure attractors of stochastic fractional lattice systems}, Fractal Fract. \textbf{8} (2024), 440.
	
	\bibitem{46} D. Li, R. Li, T. Zeng, \emph{Long-time behaviour of the non-autonomous stochastic FitzHugh--Nagumo systems on thin domains}, Dyn. Sys. \textbf{41} (2026), 252--284.
	
	\bibitem{47} S. Mi, R. Li, D. Li, \emph{Measure attractors of stochastic Schr\"odinger lattice systems driven by nonlinear noise}, Bull. Sci. Math. (2025), 103672.
	
	\bibitem{48} D. Li, X. Yang, G. Li, Y. Wu, \emph{Pullback measure attractors and evolution systems of measures for non-autonomous stochastic porous media lattice systems}, Commun. Nonlinear Sci. Numer. Simul. \textbf{152} (2026), 109227.
	
	\bibitem{49} T. Jiao, J. Leng, R. Wang, S. Zeng, \emph{Measure attractors and measure evolution systems in high-order moment metric space for fractional stochastic Ginzburg--Landau equations on unbounded domains}, Chaos Solitons Fractals \textbf{200} (2025), 116945.
	
	\bibitem{50} J. Zhang, X. Wang, D. Huang, \emph{Pullback measure attractors for non-autonomous stochastic lattice reversible Selkov systems}, Stoch. Dyn. (2025), 2540003.
	
	\bibitem{51} X. Mao, \emph{Stochastic Differential Equations and Applications}, 2nd ed., Woodhead Publishing, Cambridge, 2011.
	
	\bibitem{52} B. Wang, \emph{Sufficient and necessary criteria for existence of pullback attractors for non-compact random dynamical systems}, J. Differential Equations \textbf{253} (2012), 1544--1583.
	
	\bibitem{53} P. E. Kloeden, T. Lorenz, \emph{Mean-square random dynamical systems}, J. Differential Equations \textbf{253} (2012), 1422--1438.
	
	\bibitem{54} Y. Li, X. Tang, C. Qiao, \emph{Minimality and convexity of pullback measure attractors for fractional FitzHugh--Nagumo equations with superlinear noise}, Chaos \textbf{36} (2026), 033135.
	
	\bibitem{55} Z. Chen, D. Yang, S. Zhong, \emph{Large deviation principle for stochastic FitzHugh--Nagumo lattice systems}, Commun. Nonlinear Sci. Numer. Simul. \textbf{136} (2024), 108070.
	
	\bibitem{56} Y. Zhang, X. Li, \emph{Large deviation principles of invariant measures of stochastic discrete wave equations with multiplicative noise and nonlinear damping}, Bull. Sci. Math. \textbf{204} (2025), 103670.
\end{thebibliography}
\end{document}